\theoremstyle{plain}
\newtheorem{theorem}{Theorem}[section]
\newtheorem{lemma}[theorem]{Lemma}
\newtheorem{prop}[theorem]{Proposition}
\theoremstyle{definition}
\newtheorem{definition}[theorem]{Definition}
\newtheorem{remark}[theorem]{Remark}
\newtheorem{example}[theorem]{Example}
\theoremstyle{remark}
\begin{document}
	\title[Orthogonality induced by norm derivatives ]{Orthogonality induced by norm derivatives : A new geometric constant  and  symmetry}
	\author[Ghosh, Paul  and Sain]{Souvik Ghosh,  Kallol Paul and Debmalya Sain }

	\newcommand{\acr}{\newline\indent}
	
		\address[Ghosh]{Department of Mathematics\\ Jadavpur University\\ Kolkata 700032\\ West Bengal\\ INDIA}
	\email{sghosh0019@gmail.com}
	
	\address[Paul]{Department of Mathematics\\ Jadavpur University\\ Kolkata 700032\\ West Bengal\\ INDIA}
	\email{kalloldada@gmail.com}
	
	\address[Sain]{Department of Mathematics\\ Indian Institute of Information Technology, Raichur\\ Karnataka 584135 \\INDIA}	\email{saindebmalya@gmail.com}

	\thanks{ Souvik Ghosh would like to thank  CSIR, Govt. of India, for the financial support in the form of  Senior Research Fellowship under the mentorship of Prof. Kallol Paul.}

	\subjclass[2020]{Primary 46B20, Secondary  52A21}
	\keywords{$\rho$-orthogonality; Birkhoff-James orthogonality; Geometric constant; Left symmetric; Right symmetric}

	\begin{abstract}
		In this article we  study the difference between orthogonality induced by the norm derivatives (known as $\rho$-orthogonality) and Birkhoff-James orthogonality in a normed linear space $ \mathbb X$ by introducing a new geometric constant, denoted by $\Gamma(\mathbb{X}).$  We explore the relation between various geometric properties of the space and the constant $\Gamma(\mathbb{X}).$   We also investigate the left symmetric and right symmetric  elements of a normed linear space with respect to $\rho$-orthogonality and obtain a characterization of the same. We characterize inner product spaces among normed linear spaces using the symmetricity of $\rho$-orthogonality. Finally, we provide a complete description of both left symmetric and right symmetric elements with respect to $\rho$-orthogonality for some particular Banach spaces.
	\end{abstract}
	
	\maketitle
	\section{Introduction.} 
	In the study of the geometry of normed linear spaces, geometric constants play a significant role. There are many geometric constants in literature, see \cite{A, GL, kato, K,SGP, ZYWC23} and the references therein. In \cite{Ji}, the authors have developed a geometric constant to study the difference between Birkhoff-James orthogonality and isosceles orthogonality from the quantitative point of view. Later on \cite{papini}, Papini et al.  studied the difference between  Birkhoff-James orthogonality and Robert's orthogonality through another geometric constant. Motivated by these,  we investigate the difference between $\rho$-orthogonality and Birkhoff-James orthogonality by introducing a new constant. Additionally, we also study the symmetric points with respect to $\rho$-orthogonality. Before diving into the main results let us fix the notations and terminologies.

	Letters $\mathbb{X}, \mathbb{Y}$ denote real normed linear spaces and $\mathbb{X}^*$ stands for the dual space of $\mathbb{X}.$ Let $B_\mathbb{X} = \{x \in \mathbb{X}: \|x\|\leq 1\}$ and $S_\mathbb{X}=\{x \in \mathbb{X}: \|x\|=1\}$ denote the unit ball and unit sphere of $\mathbb{X},$ respectively. For a non-empty convex subset $C \subset \mathbb X ,$ an element $x \in C$ is said to be an extreme point of  $C,$ if $x=(1-t)y+tz,$ for some $0<t<1$ and $y, z\in C$ implies $x=y=z.$ The set of all extreme points of $C$ is denoted by $Ext(C).$ A normed linear space $\mathbb{X}$ is said to be strictly convex if $Ext(B_\mathbb{X})=S_\mathbb{X}.$  The collection of all supporting functionals at $x$ is denoted by  $J(x),$ i.e., $J(x)=\{f\in S_{\mathbb{X}^*}: f(x)=\|x\|\}.$ An element $x\in S_\mathbb{X}$ is said to be smooth if $J(x)$ is singleton and the space $\mathbb{X}$ is said to be smooth if  each element of $S_\mathbb{X}$ is smooth. An element $x \in \mathbb{X}$ is said to be Birkhoff-James orthogonal \cite{B, J1} to $y\in \mathbb{X}$,  if $\|x+\lambda y\| \geq \|x\|,$ for all $\lambda \in \mathbb{R}.$ It is denoted as $x \perp_B y.$ From \cite{J} we note that  $x \perp_B y$ if and only if there exists $f \in J(x)$ such that $f(y)=0.$ By $x^{\perp}$ we denote  the collection of all elements which are Birkhoff-James orthogonal to $x,$ i.e., $x^{\perp}=\{y\in \mathbb{X}: x\perp_B y\}.$  Following \cite{sain1}, $x$ is said to be left symmetric with respect to Birkhoff-James orthogonality if for any $y \in \mathbb{X},$ $x \perp_B y \implies y \perp_B x.$ Similarly, $x \in \mathbb{X}$ is said to be right symmetric with respect to Birkhoff-James orthogonality if for any $y \in \mathbb{X},$ $y \perp_B x \implies x \perp_B y.$ A point $x$ is said to be symmetric with respect to Birkhoff-James orthogonality if it is both left and right symmetric with respect to Birkhoff-James orthogonality. Moreover, $\mathbb{X}$ is said to be symmetric with respect to Birkhoff-James orthogonality if $x\perp_B y \implies y \perp_B x$, for all $x, y\in \mathbb{X}.$ For more on Birkhoff-James orthogonality readers may see   the survey article \cite{AMW} and the recent book \cite{MPSbook}.
	
	Let us now mention the  definition of $\rho$-orthogonality studied in \cite{CW,Mi}.  
	
 \begin{definition}
 	Let $\mathbb{X}$ be a normed linear space and let $x, y \in \mathbb{X}.$ The norm derivatives at $x$ in the direction of $y$ is defined as:
 	\begin{eqnarray*}
 		\rho'_{+}(x, y)&=& \|x\| \lim_{t \to 0^+} \frac{\|x+ty\|-\|x\|}{t} \\
 	\rho'_{-}(x, y)&=& \|x\| \lim_{t \to 0^-} \frac{\|x+ty\|-\|x\|}{t}\\
 	\rho'(x, y) &=& \frac{1}{2}(\rho'_+(x, y) + \rho'_-(x, y)).
 	\end{eqnarray*}
 	 We say that $x$ is $\rho$-orthogonal to $y$, i.e., $x \perp_{\rho} y$ if $\rho'(x, y)=0.$ Note that $\rho$-orthogonality is homogeneous, i.e., for any $\alpha, \beta \in \mathbb{R},$ $x \perp_\rho y \iff \alpha x\perp_\rho \beta y.$ For further readings on this topic one can see \cite{AST,CW, CW2, S}.
 \end{definition}
 
 Next we observe some of the important results regarding the functions $\rho'_+$ and $\rho'_-.$ 
 
 \begin{lemma}\label{functional}\cite[Th. 2.4]{Woj}
 	Let $\mathbb{X}$ be a normed linear space. Then for $x, y \in S_\mathbb{X},$ 
 	\begin{eqnarray*}
 		\rho'_+(x, y) &=&\sup \{f(y): f \in Ext(J(x))\},\\
 		\rho'_-(x, y) &=&\inf \{f(y): f \in Ext(J(x))\}.
 	\end{eqnarray*}
 \end{lemma}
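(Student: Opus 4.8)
The plan is to first establish the cleaner identity with the extremal values taken over the whole face $J(x)$, and only then upgrade to $Ext(J(x))$ via a compactness-plus-extreme-point argument. Since $x \in S_{\mathbb{X}}$ we have $\|x\|=1$, so $\rho'_+(x,y)$ is exactly the right-hand Gateaux derivative $\lim_{t\to 0^+}(\|x+ty\|-\|x\|)/t$. Because $t\mapsto\|x+ty\|$ is convex, the difference quotient is monotone in $t$, so this limit exists. For one inequality I would note that for every $f\in J(x)$ and every $t>0$, $\|x+ty\|\geq f(x+ty)=\|x\|+tf(y)$, whence the derivative dominates $f(y)$ and therefore $\sup\{f(y):f\in J(x)\}$. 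For the reverse inequality (and to see the supremum is attained) I would introduce the sublinear functional $p(z)=\lim_{t\to 0^+}(\|x+tz\|-\|x\|)/t$, which is positively homogeneous, subadditive, and satisfies $p(z)\leq\|z\|$; applying the Hahn--Banach theorem yields a linear $f$ with $f\leq p$ and $f(y)=p(y)$, and a direct check gives $\|f\|\leq 1$ together with $f(x)=\|x\|$, so that $f\in J(x)$. This establishes $\rho'_+(x,y)=\max\{f(y):f\in J(x)\}$, with the maximum attained.

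The main step is then to replace $J(x)$ by its set of extreme points. Here I would use that $J(x)=B_{\mathbb{X}^*}\cap\{f:f(x)=1\}$ is a nonempty, convex, weak-* compact subset of $\mathbb{X}^*$: the ball $B_{\mathbb{X}^*}$ is weak-* compact by the Banach--Alaoglu theorem, and the hyperplane $\{f:f(x)=1\}$ is weak-* closed as the preimage of a point under the weak-* continuous evaluation $f\mapsto f(x)$. The map $f\mapsto f(y)$ is itself weak-* continuous and affine, so by the Bauer maximum principle (which rests on the Krein--Milman theorem, valid since the weak-* topology is locally convex and Hausdorff) it attains its maximum over $J(x)$ at an extreme point. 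Combined with $Ext(J(x))\subseteq J(x)$, this gives $\sup\{f(y):f\in J(x)\}=\sup\{f(y):f\in Ext(J(x))\}$ and completes the formula for $\rho'_+$.

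Finally, the formula for $\rho'_-$ should follow by symmetry rather than a fresh argument. The substitution $t=-s$ gives $\rho'_-(x,y)=-\rho'_+(x,-y)$, and since $\sup\{f(-y):f\in Ext(J(x))\}=-\inf\{f(y):f\in Ext(J(x))\}$, we conclude $\rho'_-(x,y)=\inf\{f(y):f\in Ext(J(x))\}$, as claimed. I expect the genuinely delicate point to be the passage to extreme points: the identity over the full face $J(x)$ is classical convex analysis, but moving to $Ext(J(x))$ requires the weak-* compactness of $J(x)$ together with the Bauer/Krein--Milman machinery, and one must take care that these tools apply in an arbitrary, possibly incomplete and non-reflexive, normed space --- which they do, since Banach--Alaoglu and Krein--Milman need only the dual space equipped with its weak-* topology.
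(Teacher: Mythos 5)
The first thing to note is that the paper offers no proof of this lemma at all --- it is imported verbatim as Theorem 2.4 of W\'ojcik's cited paper [Woj] --- so there is no internal argument to compare yours against; your proposal must stand on its own, and it does. Your two-stage plan is correct and complete. The first stage, $\rho'_+(x,y)=\max\{f(y):f\in J(x)\}$, is the classical max formula for the one-sided Gateaux derivative of the norm: the lower bound follows from $\|x+ty\|\ge f(x+ty)=\|x\|+tf(y)$ for $f\in J(x)$, and the upper bound with attainment from Hahn--Banach below the sublinear functional $p(z)=\lim_{t\to 0^+}(\|x+tz\|-\|x\|)/t$; the only point you leave implicit (and it is indeed a direct check) is that $f\le p$ forces $f(x)\ge -p(-x)=1$, which together with $f(x)\le p(x)=1$ gives $f\in J(x)$. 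The second stage is also sound: $J(x)=B_{\mathbb{X}^*}\cap\{f:f(x)=1\}$ is nonempty, convex and weak-$*$ compact in any normed space, the evaluation $f\mapsto f(y)$ is weak-$*$ continuous and linear, and Bauer's maximum principle (equivalently, Krein--Milman applied to the closed face of maximizers) places a maximizer at an extreme point of $J(x)$, so $\sup\{f(y):f\in J(x)\}=\sup\{f(y):f\in Ext(J(x))\}$ with attainment. The reduction $\rho'_-(x,y)=-\rho'_+(x,-y)$ then yields the infimum formula with no extra work. Your closing remark is exactly right: the delicate step is the passage to extreme points, and Banach--Alaoglu and Krein--Milman require nothing beyond the dual of an arbitrary normed space, so completeness or reflexivity of $\mathbb{X}$ is never needed.
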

 
 \begin{lemma}\cite{Amir}\label{B-J}
 	Let $\mathbb{X}$ be a normed linear space. Then $x \perp_{B} y$ if and only if $\rho'_{-}(x, y) \leq 0 \leq \rho'_{+}(x, y).$ 
 \end{lemma}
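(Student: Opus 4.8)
The plan is to exploit the convexity of the real function $t \mapsto \|x+ty\|$ and the fact that Birkhoff--James orthogonality is exactly a one-dimensional minimality condition. First I would dispose of the trivial case $x=0$: here $\rho'_\pm(0,y)=0$ because of the defining factor $\|x\|$, while $0\perp_B y$ always holds, so the equivalence is immediate. Assume henceforth $x\neq 0$ and set $\phi(t)=\|x+ty\|$ for $t\in\mathbb R$. Since the norm is convex and $t\mapsto x+ty$ is affine, $\phi$ is a convex function on $\mathbb R$, and by definition $x\perp_B y$ holds precisely when $\phi(t)\ge \phi(0)=\|x\|$ for every $t$, i.e. when $\phi$ attains its global minimum at $t=0$.

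Next I would record the standard one-sided differentiability of convex functions: for convex $\phi$ the difference quotient $s\mapsto \frac{\phi(s)-\phi(0)}{s}$ is nondecreasing on $\mathbb R\setminus\{0\}$, so the one-sided derivatives $\phi'_+(0)=\lim_{t\to 0^+}\frac{\phi(t)-\phi(0)}{t}$ and $\phi'_-(0)=\lim_{t\to 0^-}\frac{\phi(t)-\phi(0)}{t}$ exist and satisfy $\phi'_-(0)\le \phi'_+(0)$. Comparing with the definitions gives $\rho'_+(x,y)=\|x\|\,\phi'_+(0)$ and $\rho'_-(x,y)=\|x\|\,\phi'_-(0)$; since $\|x\|>0$, the whole assertion reduces to the claim that $\phi$ is minimized at $0$ if and only if $\phi'_-(0)\le 0\le \phi'_+(0)$.

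This last claim is the heart of the matter, and it follows from the same monotonicity. If $t=0$ is a global minimizer, then for $t>0$ the quotient $\frac{\phi(t)-\phi(0)}{t}\ge 0$, whence $\phi'_+(0)\ge 0$, and for $t<0$ the quotient is $\le 0$, whence $\phi'_-(0)\le 0$. Conversely, monotonicity yields $\frac{\phi(t)-\phi(0)}{t}\ge \phi'_+(0)\ge 0$ for $t>0$ and $\frac{\phi(t)-\phi(0)}{t}\le \phi'_-(0)\le 0$ for $t<0$; in both cases $\phi(t)\ge\phi(0)$, so $x\perp_B y$, completing the argument.

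I expect the only genuinely delicate point to be the existence and elementary properties of the one-sided derivatives of a convex function (monotone difference quotients and the inequality $\phi'_-(0)\le\phi'_+(0)$); everything else is a direct translation. As an alternative that leans on the material already assembled, one may instead combine James's criterion — $x\perp_B y$ iff $f(y)=0$ for some $f\in J(x)$ — with Lemma~\ref{functional}: for $x,y\in S_{\mathbb X}$ the image $\{f(y):f\in J(x)\}$ is the closed interval $[\rho'_-(x,y),\rho'_+(x,y)]$, since $f\mapsto f(y)$ is weak*-continuous and affine on the weak*-compact convex set $J(x)$, so its extreme values are attained at extreme points and coincide with the $\sup$ and $\inf$ of Lemma~\ref{functional}. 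Then $0$ lies in this interval exactly when some norming functional annihilates $y$, and homogeneity of both relations removes the normalization $x,y\in S_{\mathbb X}$.
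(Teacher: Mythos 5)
Your proof is correct. The paper gives no argument of its own for this lemma---it is quoted verbatim from Amir's book \cite{Amir}---so there is no internal proof to compare against; your write-up in fact supplies the missing details. Your main route is the standard one: reduce to the convex function $\phi(t)=\|x+ty\|$, note that $x\perp_B y$ means exactly that $\phi$ has a global minimum at $t=0$, and use the monotone difference quotients of a convex function to show this is equivalent to $\phi'_-(0)\le 0\le \phi'_+(0)$; together with $\rho'_\pm(x,y)=\|x\|\,\phi'_\pm(0)$ and the trivial case $x=0$, this settles the equivalence. All steps check out, including the converse direction where monotonicity gives $\phi(t)-\phi(0)\ge t\,\phi'_+(0)\ge 0$ for $t>0$ and $\phi(t)-\phi(0)\ge t\,\phi'_-(0)\ge 0$ for $t<0$. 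Your alternative argument via James's criterion and Lemma~\ref{functional} is also sound, but it is logically heavier: it leans on weak*-compactness of $J(x)$, on the fact that a weak*-continuous affine functional on $J(x)$ attains its extrema at points of $Ext(J(x))$ (Bauer's maximum principle, which the paper itself invokes implicitly in Proposition~\ref{bound}), and on the homogeneity of both relations to remove the normalization $x,y\in S_{\mathbb X}$. The convexity proof is the more elementary and self-contained of the two, and is the one usually found in the cited literature.
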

Apart from the above mentioned properties of the functions $\rho'_+$ and $\rho'_-,$ interested readers may see \cite{AGT, S}.
  It is a well known fact \cite{CW, CW2} that $\perp_{\rho} \subset \perp_B$ in any normed linear space $\mathbb{X}.$ For the converse inclusion we note the following result. 
 
 \begin{theorem}\label{smooth}\cite[Prop. 2.2.2]{AST}
 	Let $\mathbb{X}$ be a normed linear space. Then $\mathbb{X}$ is smooth if and only if $x \perp_B y$ implies $x \perp_\rho y,$ for all $x, y \in \mathbb{X}.$ 
 \end{theorem}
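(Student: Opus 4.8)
The plan is to prove the two implications separately, reducing throughout to unit vectors: both $\perp_B$ and $\perp_\rho$ are homogeneous, and the cases $x=0$ or $y=0$ are trivial since then $\rho'(x,y)=0$, so it suffices to treat $x,y\in S_\mathbb{X}$.

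For the forward implication I would assume $\mathbb{X}$ is smooth and suppose $x\perp_B y$ with $x,y\in S_\mathbb{X}$. Smoothness gives $J(x)=\{f\}$, a singleton, whence $Ext(J(x))=\{f\}$ as well, so Lemma \ref{functional} collapses the two one-sided derivatives to a single value $\rho'_+(x,y)=\rho'_-(x,y)=f(y)$. On the other hand, Lemma \ref{B-J} applied to $x\perp_B y$ gives $\rho'_-(x,y)\le 0\le\rho'_+(x,y)$, which forces $f(y)=0$. Hence $\rho'(x,y)=\tfrac{1}{2}\bigl(\rho'_+(x,y)+\rho'_-(x,y)\bigr)=0$, that is, $x\perp_\rho y$.

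For the converse I argue by contraposition: assuming $\mathbb{X}$ is not smooth, I would produce $x,y$ with $x\perp_B y$ but $x\not\perp_\rho y$. Non-smoothness yields some $x\in S_\mathbb{X}$ with $J(x)$ not a singleton; since $J(x)$ is a nonempty weak*-compact convex set, it then has at least two distinct extreme points $f_1,f_2\in Ext(J(x))$, and choosing a unit vector $y_0$ on which they disagree, Lemma \ref{functional} produces a strict gap $a:=\rho'_+(x,y_0)>\rho'_-(x,y_0)=:b$. The key device is to translate $y_0$ along $x$: using the shift identity $\rho'_\pm(x,y_0+\lambda x)=\rho'_\pm(x,y_0)+\lambda$, valid for $\|x\|=1$ and readily derived from the defining limits (see \cite{AGT,S}), set $w_\lambda=y_0+\lambda x$, so that $\rho'_+(x,w_\lambda)=a+\lambda$ and $\rho'_-(x,w_\lambda)=b+\lambda$. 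By Lemma \ref{B-J}, $x\perp_B w_\lambda$ holds precisely when $b+\lambda\le 0\le a+\lambda$, i.e. for every $\lambda$ in the nondegenerate interval $[-a,-b]$ of length $a-b>0$, whereas $x\perp_\rho w_\lambda$ means $(a+\lambda)+(b+\lambda)=0$, which holds only at the single interior point $\lambda=-\tfrac{a+b}{2}$. Choosing any $\lambda\in[-a,-b]$ with $\lambda\ne-\tfrac{a+b}{2}$ therefore gives $x\perp_B w_\lambda$ while $\rho'(x,w_\lambda)=\lambda+\tfrac{a+b}{2}\ne 0$, so $x\not\perp_\rho w_\lambda$, completing the contrapositive.

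The forward implication is essentially routine bookkeeping of Lemmas \ref{functional} and \ref{B-J}. The main obstacle is the converse: the naive choice of a direction separating $f_1$ and $f_2$ makes the one-sided derivatives straddle zero but need not break $\rho$-orthogonality, because the outer supremum and infimum over $Ext(J(x))$ may remain symmetric about zero. The translation $w_\lambda=y_0+\lambda x$ is exactly what resolves this, since it shifts both one-sided derivatives by the same amount $\lambda$, keeping $x\perp_B w_\lambda$ on a whole interval of parameters while allowing $\rho'(x,w_\lambda)=0$ at only one parameter. I would take care to invoke Lemma \ref{functional} only for the unit vector $y_0$ to obtain the strict gap, and to read off the values of $\rho'_\pm$ at the non-unit vectors $w_\lambda$ from the shift identity rather than from Lemma \ref{functional} directly.
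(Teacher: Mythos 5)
Your proof is correct. Note that the paper does not actually contain its own proof of this statement---it is quoted with the citation \cite[Prop. 2.2.2]{AST}---so there is no internal argument to compare against; what you have written stands as a legitimate self-contained proof using only the paper's Lemmas \ref{functional} and \ref{B-J}. The forward direction is indeed routine bookkeeping. The substance is in the converse, and your key device is the right one: a unit vector $y_0$ separating two extreme supporting functionals gives a strict gap $a=\rho'_+(x,y_0)>\rho'_-(x,y_0)=b$ but, as you observe, need not itself violate $\rho$-orthogonality (the gap can be symmetric about zero); the translation $w_\lambda=y_0+\lambda x$ together with the shift identity $\rho'_\pm(x,y+\lambda x)=\rho'_\pm(x,y)+\lambda\|x\|^2$ turns Birkhoff--James orthogonality into a nondegenerate interval of parameters $\lambda\in[-a,-b]$ while $\rho$-orthogonality pins $\lambda$ to the single value $-\tfrac{a+b}{2}$, so any other $\lambda$ in the interval gives the required counterexample. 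Two points worth making explicit in a final write-up: the existence of \emph{two distinct extreme points} of $J(x)$ when $J(x)$ is not a singleton rests on the Krein--Milman theorem (if $Ext(J(x))$ were a singleton, its weak*-closed convex hull $J(x)$ would be too), and the shift identity, which you cite but do not derive, deserves the short computation from the defining limits---it is exactly what licenses evaluating $\rho'_\pm$ at the non-unit vectors $w_\lambda$, where Lemma \ref{functional} as stated does not apply; you correctly flag this subtlety. One could also shortcut the last step by taking the endpoint $\lambda=-a$, which gives $\rho'_+(x,w_{-a})=0$ and $\rho'_-(x,w_{-a})=b-a<0$, hence $\rho'(x,w_{-a})<0$ directly.
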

 
 Observe that if $\mathbb{X}$ is not  a smooth space then $\perp_B$ and $\perp_\rho$ are not equivalent and so it is worth introducing the new constant to study the difference between these two orthogonality, quantitatively.

 \begin{definition}
 	Let $\mathbb{X}$ be a normed linear space. We define the following constant $\Gamma(\mathbb{X})$ as:
 	\[\Gamma (\mathbb{X}) = \sup \big\{|\rho'(x, y)|: x, y \in S_\mathbb{X} \, \mbox{and} \, x \perp_B y\big\}.\]
 \end{definition}
 
We recall the following two well known geometric constants in a normed linear space, which play important roles in this article.
 
 \begin{definition}
 	Let $\mathbb{X}$ be a normed linear space.
 	\begin{enumerate}
 	 \item Then the \emph{James constant} \cite{GL} is defined by 
 	\[
 	J(\mathbb{X})= \sup\big\{\min\{\|x-y\|, \|x+y\|\}: x, y \in S_\mathbb{X}\big\}.
 	\]
 	\item  The \emph{modulus of convexity} is defined as:
 	\[\delta_\mathbb{X}(\epsilon) = \inf\bigg\{1-\frac{\|x+y\|}{2}: x, y\in S_\mathbb{X}, \|x-y\|\geq \epsilon\bigg\},\] where $\epsilon \in [0, 2].$
 	\end{enumerate}
 \end{definition}
 The James constant studies the `uniform non-squareness' of the unit sphere of a normed linear space whereas the modulus of convexity studies the uniform convexity. The space  $\mathbb{X}$ is uniformly non-square if and only if $J(\mathbb{X})<2$ and $\mathbb{X}$ is uniformly convex if and only if  $\delta_\mathbb{X}(\epsilon)>0,$ whenever $\epsilon >0.$ Given any $x, y \in \mathbb{X},$ let us denote the ray passing through $y$ starting from $x$ as $[x, y\rangle,$ which is defined by $[x, y\rangle = \{(1-t)x+ty: t \geq 0\}.$ Following  \cite{BFS}, we mention the positive orientation of a two-dimensional Banach space $\mathbb{X}.$ Suppose that $x=(x_1, x_2), y=(y_1, y_2) \in \mathbb{X},$ where $\mathbb{X}$ is identified with $\mathbb{R}^2$ in the canonical way. Then we say `$x$ precedes $y$,' i.e.,  $x \prec y$ if $x_1y_2-x_2y_1>0.$ In this connection, we would like to mention a very important lemma.
 
 \begin{lemma} (Monotonicity lemma) \label{lem; mon} \cite{MSW}
	Let $\mathbb{X}$ be a two-dimensional Banach space and let $x, y, z \in \mathbb{X}\setminus \{0\}$ such that $x \neq z.$ Suppose that the ray $[0, y\rangle$ lies in between the rays $[0, x\rangle$ and $[0, z\rangle$ with $\|y\|=\|z\|.$  Then $\|x-y\| \leq \|x-z\|.$
	
	Moreover, the inequality is strict if $\mathbb{X}$ is strictly convex.
 \end{lemma}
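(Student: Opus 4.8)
The plan is to reduce the whole statement to a single application of the triangle inequality together with its reverse form, after locating the point where the segment $[x,z]$ crosses the ray $[0,y\rangle$. Write $r=\|y\|=\|z\|$. The first step is to use the two-dimensionality of $\mathbb{X}$ and the betweenness hypothesis to produce a point $u$ lying simultaneously on the segment $[x,z]$ and on the ray $[0,y\rangle$; since $u\in[0,y\rangle$ we may write $u=\alpha y$ with $\alpha\ge 0$. The geometric content of the hypothesis that $[0,y\rangle$ lies between $[0,x\rangle$ and $[0,z\rangle$ is precisely that this ray separates $x$ from $z$ (or passes through one of them), and this is exactly what guarantees that such a crossing point $u$ exists.

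The core estimate is then a short chain. Because $u$ lies on the segment $[x,z]$ we have the exact splitting $\|x-u\|+\|u-z\|=\|x-z\|$. The triangle inequality gives $\|x-y\|\le \|x-u\|+\|u-y\|$, and since $u-y=(\alpha-1)y$ we have $\|u-y\|=|\alpha-1|\,r$. The key point is that the \emph{reverse} triangle inequality yields $\|u-z\|\ge \big|\|u\|-\|z\|\big|=|\alpha r-r|=|\alpha-1|\,r=\|u-y\|$. Feeding this into the triangle estimate produces $\|x-y\|\le \|x-u\|+\|u-y\|\le \|x-u\|+\|u-z\|=\|x-z\|$, which is the assertion. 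I would verify that the degenerate configurations ($\alpha=0$, i.e.\ $0\in[x,z]$, and $u=x$ when $[0,y\rangle=[0,x\rangle$) are handled by the very same chain, so no separate argument is needed for them.

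For the strict inequality under strict convexity I would analyze the equality case by tracing it back through the chain. If $\|x-y\|=\|x-z\|$, then both inequalities above must be equalities, so in particular $\|u-y\|=\|u-z\|$, which forces equality in the reverse triangle inequality $\|u-z\|\ge \big|\|u\|-\|z\|\big|$. Rewriting this as an equality of the form $\|p+q\|=\|p\|+\|q\|$ (with $p=u-z$, $q=z$, or the symmetric choice according to whether $\alpha\ge 1$ or $\alpha\le 1$), strict convexity forces $p$ and $q$ to be positively proportional; hence $u$, and therefore $y$, is a positive multiple of $z$. Combined with $\|y\|=\|z\|$ this gives $y=z$, a degenerate case. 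Thus under strict convexity a non-degenerate configuration yields the strict inequality.

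The step I expect to be the main obstacle is the first one: turning the informal phrase ``lies in between'' into a precise statement that guarantees the crossing point $u=\alpha y$ with $\alpha\ge 0$, and organizing the boundary configurations so that the single triangle-inequality chain covers all of them uniformly. By contrast, the analytic heart of the argument — the forward and reverse triangle inequalities — is entirely routine, and the only additional care required is the equality analysis that converts strict convexity into the strict conclusion.
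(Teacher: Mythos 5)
The paper itself offers no proof of this lemma --- it is quoted from the survey \cite{MSW} --- so your proposal has to stand on its own merits. For the non-strict inequality it does: the reduction of ``betweenness'' to the existence of a crossing point is legitimate (if $y=\lambda x+\mu z$ with $\lambda,\mu\ge 0$, then $u=y/(\lambda+\mu)\in[x,z]$; in the degenerate situation where $z$ is a negative multiple of $x$, the segment $[x,z]$ contains the origin and $u=0$ works), and the chain $\|x-y\|\le\|x-u\|+\|u-y\|\le\|x-u\|+\|u-z\|=\|x-z\|$, whose middle step is the reverse triangle inequality $\|u-z\|\ge\bigl|\|u\|-\|z\|\bigr|=|\alpha-1|\,r=\|u-y\|$, is correct and covers all configurations uniformly. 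This is in fact the standard argument for the monotonicity lemma, and your remark that strictness tacitly requires $y\ne z$ is a correct reading of the statement.

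The strictness argument, however, has a genuine (if small) gap: you trace equality only through the second inequality of the chain, i.e.\ equality in the reverse triangle inequality for $u$ and $z$, and that step is vacuous precisely when $u=0$, which is forced whenever $z$ is a negative multiple of $x$ and $y\notin\mathbb{R}x$. In that case $\|u-y\|=\|u-z\|=r$ holds automatically, the identity $\|p+q\|=\|p\|+\|q\|$ with $q=u=0$ is trivially true, and your conclusion ``hence $u$, and therefore $y$, is a positive multiple of $z$'' fails outright ($u=0$ is not such a multiple, and $y=u/\alpha$ is meaningless for $\alpha=0$). This configuration is not hypothetical: in $\ell_\infty^2$ take $x=(1,1)$, $z=(-1,-1)$, $y=(1,-1)$ to see that equality genuinely occurs there when strict convexity is absent, so the strictness proof must engage with it. The repair stays inside your framework: when $u=0$ the second inequality is an equality for free, so overall equality forces equality in the \emph{first} inequality, $\|x-y\|=\|x\|+\|y\|$; strict convexity then makes $-y$ a positive multiple of $x$, i.e.\ $y\in[0,z\rangle$, and $\|y\|=\|z\|$ gives $y=z$, the same degenerate conclusion as in your other cases. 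With that one case added, your proof is complete.
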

 
Henceforth, the results of this article are mainly divided into two sections excluding the introductory part. In the first section we explore the newly defined constant $\Gamma(\mathbb{X}).$ We obtain a relation between uniform non-squareness and the constant $\Gamma(\mathbb{X}).$ Then we show that in case of finite-dimensional Banach spaces extreme points are sufficient to estimate $\Gamma(\mathbb{X}).$ Thereafter we give a complete description of $\Gamma(\mathbb{X})$ in case of two-dimensional polygonal Banach space whose unit sphere is a regular 2n-gon. Also, we obtain a necessary condition for uniformly convex Banach space in terms of $\Gamma(\mathbb{X}).$ In the last section we deal with the symmetricity with respect to $\rho$-orthogonality. There we observe the interconnection between $\rho$-symmetricity and symmetricity with respect to Birkhoff-James orthogonality. Further, we obtain a characterization of both $\rho$-left and $\rho$-right symmetric points.  Finally we give a complete description of both the $\rho$-left and $\rho$-right symmetric points of the spaces $\ell_1^n$ and $\ell_\infty^n.$

\section{$\Gamma(\mathbb{X})$ and it's properties.}

In the beginning, we develop a bound for the  constant $\Gamma(\mathbb{X})$. To do so we use the notion of $\mathcal{E}(\mathbb{X}),$ introduced in \cite{CKS}.

 \begin{definition}
 	Suppose that $d: \mathbb{X}\setminus \{0\} \rightarrow \mathbb{R}$ is defined as $d(x) = diam(J(x)),$ where $J(x)$ is the collection of all the supporting linear functionals at $x.$  Then $\mathcal{E}(\mathbb{X})$ is defined as 
 	$$\mathcal{E}(\mathbb{X}) = \sup\{d(x): x\in S_\mathbb{X}\}.$$
 \end{definition}

 \begin{prop}\label{bound}
 	For a normed linear space $\mathbb{X},$ $ 0 \leq \Gamma(\mathbb{X}) \leq  \min\{ \mathcal{E}(\mathbb{X}), \frac{1}{2}\}.$
 \end{prop}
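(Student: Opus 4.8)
The plan is to fix an arbitrary admissible pair $x, y \in S_\mathbb{X}$ with $x \perp_B y$ and bound $|\rho'(x,y)|$ by each of the two quantities $\tfrac12$ and $\mathcal{E}(\mathbb{X})$ separately; taking the supremum over all such pairs then yields $\Gamma(\mathbb{X}) \le \min\{\mathcal{E}(\mathbb{X}), \tfrac12\}$. The lower bound $\Gamma(\mathbb{X}) \ge 0$ is immediate, since $\Gamma(\mathbb{X})$ is a supremum of absolute values (and the defining set is nonempty as soon as $\dim \mathbb{X} \ge 2$). The two lemmas quoted above supply everything needed: Lemma~\ref{functional} represents $\rho'_+(x,y)$ and $\rho'_-(x,y)$ as a supremum and an infimum of $f(y)$ over $f \in Ext(J(x))$, while Lemma~\ref{B-J} records that $x \perp_B y$ forces the sign condition $\rho'_-(x,y) \le 0 \le \rho'_+(x,y)$.

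For the bound by $\tfrac12$, I would note that every $f \in Ext(J(x)) \subset S_{\mathbb{X}^*}$ satisfies $|f(y)| \le \|f\|\,\|y\| = 1$, so the representation in Lemma~\ref{functional} gives $\rho'_+(x,y) \le 1$ and $\rho'_-(x,y) \ge -1$. Combining this with the sign condition from Lemma~\ref{B-J}, we get $\rho'_+(x,y) \in [0,1]$ and $\rho'_-(x,y) \in [-1,0]$, whence $|\rho'(x,y)| = \tfrac12\bigl|\rho'_+(x,y) + \rho'_-(x,y)\bigr| \le \tfrac12$.

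For the bound by $\mathcal{E}(\mathbb{X})$, the key observation is that for any $f, g \in Ext(J(x))$ one has $(f-g)(y) \le \|f-g\|\,\|y\| = \|f-g\| \le d(x)$, because $f, g \in J(x)$ and $d(x) = diam(J(x))$. Taking the supremum over $f$ and $g$ in the representation of Lemma~\ref{functional} yields $\rho'_+(x,y) - \rho'_-(x,y) \le d(x) \le \mathcal{E}(\mathbb{X})$. Now I would exploit the sign condition once more: writing $a = \rho'_+(x,y) \ge 0$ and $b = -\rho'_-(x,y) \ge 0$, we have $a + b \le d(x)$, and since $|a - b| \le \max\{a, b\} \le a + b$, it follows that $|\rho'(x,y)| = \tfrac12|a - b| \le \tfrac12 d(x) \le \tfrac12 \mathcal{E}(\mathbb{X}) \le \mathcal{E}(\mathbb{X})$.

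Putting the two estimates together gives $|\rho'(x,y)| \le \min\{\mathcal{E}(\mathbb{X}), \tfrac12\}$ for every admissible pair, and taking the supremum proves the claim. The argument is short, and the only point requiring care is the translation of the sign information from Lemma~\ref{B-J} into control of $|\rho'_+(x,y) + \rho'_-(x,y)|$: it is precisely the fact that $\rho'_+$ and $\rho'_-$ lie on opposite sides of $0$ that lets the diameter estimate on the \emph{difference} $\rho'_+ - \rho'_-$ be converted into the desired estimate on the \emph{average} $\rho'$. I expect no substantive obstacle beyond this bookkeeping; indeed the computation even delivers the slightly sharper bound $\tfrac12\mathcal{E}(\mathbb{X})$ along the way.
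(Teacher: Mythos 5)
Your proof is correct and follows essentially the same route as the paper: both arguments rest on Lemma~\ref{functional} and Lemma~\ref{B-J}, and both hinge on the same observation that the opposite signs of $\rho'_+(x,y)$ and $\rho'_-(x,y)$ allow a bound on their \emph{difference} to be converted into a bound on their \emph{average}. Two minor points where your version is slightly tidier: you bound $\rho'_+(x,y)-\rho'_-(x,y)\le d(x)$ directly by taking suprema over pairs of extreme functionals, thereby avoiding the weak*-compactness/attainment step the paper invokes to produce $f_0,g_0\in J(x)$, and your bookkeeping keeps the factor $\tfrac12$, giving the marginally sharper estimate $\Gamma(\mathbb{X})\le\tfrac12\,\mathcal{E}(\mathbb{X})$, which the paper's own computation also yields but then discards in its final inequality.
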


\begin{proof}
	It is easy to see that $\Gamma(\mathbb{X}) \geq 0.$ To obtain the upper bound, we first note from \cite[Th. 2.1.1]{AST} that $|\rho'_{\pm}(x, y)| \leq \|x\|\|y\|.$ Thus $|\rho'(x, y)| \leq 1,$ for any $x, y \in S_\mathbb{X}.$  From Lemma \ref{B-J} we see that when $x \perp_B y,$ we have  $\rho'_{-}(x, y) \leq 0 \leq \rho'_{+}(x, y).$ This implies that $|\rho'(x, y)| \leq \frac{1}{2}. $ Now we show $\Gamma(\mathbb{X}) \leq \mathcal{E}(\mathbb{X}).$ Let us consider any two arbitrary elements $x, y\in S_\mathbb{X}$ such that $x \perp_B y.$ From Lemma \ref{functional}, we note that $\rho'_+(x, y) = \sup \{f(y): f \in Ext(J(x))\}.$ Since $J(x)$ is weak*-compact and convex subset of $\mathbb{X}^*,$  it follows that there exists $f_0 \in J(x)$ such that $\rho'_+(x, y) = f_0(y).$ Similarly, we can obtain that $\rho'_-(x, y) = g_0(y),$ for some $g_0 \in J(x).$ Also, from Lemma \ref{B-J} we note that $f_0(y) \geq 0 \geq g_0(y)$ as $x \perp_B y.$ Thus we have 
	\begin{eqnarray*}
	  |\rho'(x, y)| &=& \frac{1}{2}|\rho'_+(x, y) + \rho'_-(x, y)|\\
	  &=& \frac{1}{2}|f_0(y)+g_0(y)|\\
	  &\leq& \frac{1}{2} |f_0(y) - g_0(y)|\\
	  &\leq& \|f_0 - g_0\| \leq d(x).
	\end{eqnarray*}
	Therefore,
	 \[\Gamma(\mathbb{X}) = \sup\{|\rho'(x, y)|: x, y\in S_\mathbb{X}, x \perp_B y\} \leq \sup\{d(x): x \in S_\mathbb{X}\} = \mathcal{E}(\mathbb{X}).\]
	This completes the proof.
\end{proof}

For any smooth normed linear space we note that $\Gamma(\mathbb{X})=0.$ On the other hand, it is easy to see that $\Gamma(\mathbb{X})=\frac{1}{2},$ when $\mathbb{X} = \ell_\infty^n.$ In fact, taking  $x = (1, 1, \ldots, 1) $ and $y=(0, 0, \ldots, 1),$
we get $\rho'(x, y) = \frac{1}{2}.$  Similarly we can show that $\Gamma(\mathbb{X})=\frac{1}{2},$ when $\mathbb{X} = \ell_1^n.$ Also, we give example of an infinite-dimensional Banach space where $\Gamma(\mathbb{X})=\frac{1}{2}.$

\begin{example}	
	Let us consider the space $c_0$ and let $x=(1, 1, 0, \ldots, 0, \ldots) \in c_0.$ Clearly, $f_1, f_2 \in J(x),$ where for each $i \in \{1, 2\},$ $f_i \in c_0^*$ and $f_i(y) = y_i,$ for all $y =(y_1, y_2, \ldots) \in c_0.$ Take $z=(0, 1, 0, \ldots) \in c_0.$ Clearly, $x \perp_B z.$ Also, we have $f_1(z) =0$ and $f_2(z) = 1.$ Since $x \perp_B z,$ from Lemma \ref{B-J} we have $\rho'_-(x, z) \leq 0 \leq \rho'_+(x, z).$ Also, applying Lemma \ref{functional}, it is easy to observe that $\rho'(x, z) = \frac{1}{2}(\rho'_+(x, z) + \rho'_-(x, z)) = \frac{1}{2}.$ Now from Proposition \ref{bound} one can see that $\Gamma(\mathbb X)=\frac{1}{2}.$ 
	 \end{example}

 Next we prove the following theorem which will be useful to estimate the constant $\Gamma(\mathbb{X})$ in any finite-dimensional polyhedral Banach space.
\begin{theorem}\label{extreme}
	Let $\mathbb{X}$ be an $n$-dimensional Banach space. Then there exists an element $z \in Ext(B_\mathbb{X})$ such that $\Gamma(\mathbb{X})= \rho'(z, w),$ for some $w \in S_\mathbb{X} $ with $z \perp_B w.$
\end{theorem}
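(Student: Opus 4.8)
The plan is to reduce the supremum to a signed quantity, to recover the value of $\Gamma(\mathbb{X})$ as a limit controlled by supporting functionals, and finally to transport that value to an extreme point. Since $x\perp_B y\iff x\perp_B(-y)$ while $\rho'(x,-y)=-\rho'(x,y)$ (by the representation of $\rho'_\pm$ via $J(x)$), replacing $y$ by $-y$ when needed gives
\[
\Gamma(\mathbb{X})=\sup\{\rho'(x,y):x,y\in S_\mathbb{X},\ x\perp_B y\},
\]
so I may drop the absolute value. The set $K=\{(x,y)\in S_\mathbb{X}\times S_\mathbb{X}:x\perp_B y\}$ is compact, because $\mathbb{X}$ is finite dimensional and Birkhoff--James orthogonality passes to limits in $\|x+\lambda y\|\ge\|x\|$. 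Throughout I would use Lemma \ref{functional} in the form $\rho'_+(x,y)=\max\{f(y):f\in J(x)\}$ and $\rho'_-(x,y)=\min\{f(y):f\in J(x)\}$, together with the fact that the set-valued map $x\mapsto J(x)$ has closed graph and compact values.

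Next I would prove attainment of the \emph{value}. Taking a maximizing sequence $(x_n,y_n)\in K$ and, using compactness of $J(x_n)$, functionals $f_n,g_n\in J(x_n)$ with $f_n(y_n)=\rho'_+(x_n,y_n)$ and $g_n(y_n)=\rho'_-(x_n,y_n)$ (exactly as in the proof of Proposition \ref{bound}), I pass to a subsequence with $x_n\to x_0$, $y_n\to y_0$, $f_n\to f$, $g_n\to g$. The closed graph of $J$ forces $f,g\in J(x_0)$ and $x_0\perp_B y_0$, while Lemma \ref{B-J} gives $f(y_0)\ge 0\ge g(y_0)$. Hence $\Gamma(\mathbb{X})=\tfrac12\big(f(y_0)+g(y_0)\big)$ with $f,g\in J(x_0)$. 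I would be careful here: since $\rho'_+$ is only upper semicontinuous and $\rho'_-$ only lower semicontinuous, $f$ and $g$ need not realize $\rho'_\pm(x_0,y_0)$, so one should \emph{not} expect $\rho'(x_0,y_0)=\Gamma(\mathbb{X})$, nor that a maximizing point $x_0$ is extreme.

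To reach an extreme point I would pass to the exposed faces $F_f=\{w\in S_\mathbb{X}:f(w)=1\}$, $F_g=\{w\in S_\mathbb{X}:g(w)=1\}$ and set $F=F_f\cap F_g$. This is a nonempty (it contains $x_0$), compact, convex face of $B_\mathbb{X}$; since a face of a face is a face, $Ext(F)\subseteq Ext(B_\mathbb{X})$. Choosing $z\in Ext(F)$ gives an extreme point of $B_\mathbb{X}$ with $f,g\in J(z)$, so the extreme point retains the two functionals that produced the value $\Gamma(\mathbb{X})$.

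The crux — and the step I expect to be the \textbf{main obstacle} — is to produce a direction $w$ with $z\perp_B w$ and $\rho'(z,w)=\Gamma(\mathbb{X})$. The naive choice $w=y_0$ need not work, because $J(z)$ may contain extra extreme functionals so that $\rho'_-(z,y_0)$ is strictly smaller than $g(y_0)$ and drags $\rho'(z,y_0)$ below $\Gamma(\mathbb{X})$; for the same reason a Carathéodory decomposition of $x_0$ combined with a \emph{fixed} direction fails, and one can in fact exhibit a three-dimensional polytopal norm with $\Gamma(\mathbb{X})=\tfrac12$ in which the relevant extreme points $z$ satisfy $\rho'(z,y_0)=0$ while $\rho'(x_0,y_0)=\tfrac12$. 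The direction must therefore be re-chosen at $z$. My approach would exploit that for every $w\in F\cap S_\mathbb{X}$ one has $f(w)=1$, hence $\rho'_+(z,w)=1$ and $\rho'(z,w)=\tfrac12\big(1+\mu(w)\big)$, where $\mu(w)=\min\{h(w):h\in J(z)\}$ is continuous and concave on the connected set $F\cap S_\mathbb{X}$ with $\mu(z)=1$. An intermediate-value argument would then aim to produce $w^\ast$ with $\mu(w^\ast)=2\Gamma(\mathbb{X})-1\le 0$, giving $z\perp_B w^\ast$ and $\rho'(z,w^\ast)=\Gamma(\mathbb{X})$. This closes the argument cleanly in the extremal case $\Gamma(\mathbb{X})=\tfrac12$, but the genuine difficulty is that for $\Gamma(\mathbb{X})<\tfrac12$ one checks that $\mu>0$ throughout $F$, so the realizing direction is forced to leave the face $F$; guaranteeing that the additional minimizing functionals of $J(z)$ can be activated by a suitable $w$ so as to tune $\rho'_-(z,w)$ to exactly the level $2\Gamma(\mathbb{X})-1$ is where the real work lies, and I would attack it by combining the quantitative identity $g(y_0)=2\Gamma(\mathbb{X})-f(y_0)\ge 2\Gamma(\mathbb{X})-1$, the concavity of $\mu$, and a compactness/selection argument.
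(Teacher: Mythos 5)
Your preliminary reductions are correct and are in fact more careful than the paper's own treatment: dropping the absolute value via $y\mapsto -y$, the compactness of $\{(x,y)\in S_\mathbb{X}\times S_\mathbb{X}:x\perp_B y\}$, the extraction of $f,g\in J(x_0)$ with $f(y_0)\ge 0\ge g(y_0)$ and $\tfrac{1}{2}\bigl(f(y_0)+g(y_0)\bigr)=\Gamma(\mathbb{X})$, and the passage to $z\in Ext(F_f\cap F_g)\subseteq Ext(B_\mathbb{X})$ with $f,g\in J(z)$ are all sound. But the proposal stops exactly where the theorem begins: you never produce a direction $w$ with $z\perp_B w$ and $\rho'(z,w)=\Gamma(\mathbb{X})$. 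Your intermediate-value sketch only covers directions $w$ lying in the face $F$, where $\rho'(z,w)=\tfrac{1}{2}(1+\mu(w))$, and, as you concede, this can close the argument only when $\mu$ actually attains the value $2\Gamma(\mathbb{X})-1$ on $F\cap S_\mathbb{X}$ (e.g.\ in the extremal case $\Gamma(\mathbb{X})=\tfrac{1}{2}$); for $\Gamma(\mathbb{X})<\tfrac{1}{2}$ you offer a plan, not an argument. This is a genuine gap: what you have is a correct reduction together with a diagnosis of the obstruction, not a proof.

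That said, your diagnosis is sharper than the paper's own argument, which proceeds exactly along the route you reject: it writes $x=\sum_k\lambda_k z_k$ by Carath\'eodory's theorem and claims $\rho'(x,y)\le\max_k\rho'(z_k,y)$ for the \emph{same} direction $y$, using $\|x+ty\|\le\sum_k\lambda_k\|z_k+ty\|$. That convexity estimate does give $\rho'_+(x,y)\le\sum_k\lambda_k\rho'_+(z_k,y)$, but for $t<0$ dividing by $t$ reverses the inequality, so it gives $\rho'_-(x,y)\ge\sum_k\lambda_k\rho'_-(z_k,y)$; the two bounds point in opposite directions and cannot be added to compare the means $\rho'$, so the paper's displayed chain of inequalities is not valid. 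Indeed, the configuration you allude to exists: in $\mathbb{R}^3$ with $\|v\|=\max\{|v_1|,\ |v_3|,\ |\tfrac{3}{2}v_1+v_2-v_3|,\ |\tfrac{3}{2}v_1-v_2-v_3|\}$, the point $x_0=(1,0,1)$ lies in the relative interior of the edge joining the extreme points $z_1=(1,\tfrac{1}{2},1)$ and $z_2=(1,-\tfrac{1}{2},1)$ (so its only extreme-point decomposition is $x_0=\tfrac{1}{2}z_1+\tfrac{1}{2}z_2$), and $y_0=(0,0,1)$ gives $\rho'(x_0,y_0)=\tfrac{1}{2}$ while $\rho'(z_1,y_0)=\rho'(z_2,y_0)=0$; the conclusion of the theorem survives there only because the direction may be changed ($z_1\perp_B z_2$ with $\rho'(z_1,z_2)=\tfrac{1}{2}$, and note $z_2\in F$). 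So the step you isolate as the real work is precisely the step at which the paper's printed proof also breaks down; completing it for arbitrary $\Gamma(\mathbb{X})$ requires an argument that neither your proposal nor the paper supplies.
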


\begin{proof}
	Using Carath\'eodory's theorem \cite{R}, we note that for any $x \in S_\mathbb{X},$ there exist $z_1, z_2, \ldots, z_{n+1} \in Ext(B_\mathbb{X})$ such that $x=\sum_{k=1}^{n+1} \lambda_k z_k,$ where $\sum_{k=1}^{n+1} \lambda_k =1$ and  $\lambda_k \geq 0,$ for each $1 \leq k \leq n+1.$ Suppose that $y \in S_\mathbb{X}$ such that  $x\perp_B y.$ Then it is straightforward to see that $z_k \perp_B y,$ for each $1 \leq k \leq n+1.$ Now 
	\begin{eqnarray*}
		2\rho'(x, y) &=& \rho'_+(x, y) + \rho'_-(x, y)\\
		&=& \rho'_+ \Big(\sum_{k=1}^{n+1} \lambda_k z_k, y\Big) + \rho'_-\Big(\sum_{k=1}^{n+1} \lambda_k z_k, y\Big) \\
		&=& \lim_{t \to 0^+} \frac{\|\sum_{k=1}^{n+1} \lambda_k z_k +ty\| -1}{t} + \lim_{t \to 0^-}\frac{\|\sum_{k=1}^{n+1} \lambda_k z_k +ty\| -1}{t}\\
		&=& \lim_{t\to 0^+}  \frac{\|\sum_{k=1}^{n+1} \lambda_k z_k + \sum_{k=1}^{n+1} \lambda_k ty\| -1}{t} + \lim_{t \to 0^-}\frac{\|\sum_{k=1}^{n+1} \lambda_k z_k + \sum_{k=1}^{n+1} \lambda_k ty\| -1}{t}\\
		&\leq& \lim_{t \to 0^+} \frac{\sum_{k=1}^{n+1}\lambda_k \|z_k +ty\| -1}{t} + \lim_{t \to 0^-}\frac{\sum_{k=1}^{n+1} \lambda_k\| z_k +ty\| -1}{t}\\
		&=& \lim_{t \to 0^+} \frac{\sum_{k=1}^{n+1}\lambda_k \|z_k +ty\| - \sum_{k=1}^{n+1} \lambda_k}{t} + \lim_{t \to 0^-}\frac{\sum_{k=1}^{n+1} \lambda_k\| z_k +ty\| - \sum_{k=1}^{n+1} \lambda_k}{t}\\
		&=& \sum_{k=1}^{n+1} \lambda_k (\rho'_{+}(z_k, y) + \rho_{-}(z_k, y))\\
		&\leq& 2\max \{ \rho'(z_k, y)\}.
	\end{eqnarray*}
	This clearly shows that for any $x \in S_\mathbb{X},$ there exists $z\in Ext(B_\mathbb{X})$ such that $\rho'(x, y) \leq \rho'(z, y).$ This completes the proof of the theorem. 
	
\end{proof}

A normed linear space $\mathbb{X}$ is uniformly non-square if $ \sup_{x, y \in S_\mathbb{X}} \min \{\|x-y\|, \|x+y\|\} < 2.$ Note that the spaces $\ell_1^n, \ell_\infty^n$ are non uniformly non-square. Then it is natural to ask  whether for any non uniformly non-square space $\mathbb{X},$  $\Gamma(\mathbb{X})=\frac{1}{2}.$ To proceed in this direction
first we prove the following lemma.  See \cite[Prop. 2.6]{GL}. 

\begin{lemma}\label{lemma}
	Let $\mathbb{X}$ be a two-dimensional Banach space and let $\mathbb{X}$ be non uniformly non-square. Then $\mathbb{X}$ is isometrically isomorphic to $\ell_\infty^2.$
\end{lemma}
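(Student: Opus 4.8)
The plan is to exploit the fact that $\mathbb{X}$ being non uniformly non-square is precisely the statement $J(\mathbb{X}) = 2$, and then to produce an explicit parallelogram that must coincide with $B_\mathbb{X}$. First I would use that $\mathbb{X}$ is two-dimensional, so $S_\mathbb{X}$ is compact and the map $(x,y) \mapsto \min\{\|x-y\|, \|x+y\|\}$ is continuous on $S_\mathbb{X} \times S_\mathbb{X}$; hence the supremum defining $J(\mathbb{X})$ is attained. Thus there exist $x, y \in S_\mathbb{X}$ with $\min\{\|x-y\|, \|x+y\|\} = 2$. Since $\|x \pm y\| \leq \|x\| + \|y\| = 2$, this forces $\|x+y\| = \|x-y\| = 2$.

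Next I would pass to the natural coordinates $u = \frac{x+y}{2}$ and $v = \frac{x-y}{2}$, so that $\|u\| = \|v\| = 1$ and $x = u+v$, $y = u-v$. Consider the parallelogram $P = \mathrm{conv}\{x, y, -x, -y\}$; by convexity of $B_\mathbb{X}$ one immediately has $P \subseteq B_\mathbb{X}$. The key observation is that $u = \frac{1}{2}(x+y)$ is the midpoint of the chord $[x,y]$ of $B_\mathbb{X}$ and lies on $S_\mathbb{X}$. Picking a supporting functional $f \in J(u)$ and using $f(x), f(y) \leq 1$ together with $\frac{1}{2}(f(x) + f(y)) = f(u) = 1$ forces $f(x) = f(y) = 1$, so the entire segment $[x,y]$ lies on the supporting hyperplane $\{f = 1\}$ and therefore on $S_\mathbb{X}$. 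Applying the same reasoning to each of the four edges of $P$, whose midpoints are $\pm u, \pm v$ and all lie in $S_\mathbb{X}$, every edge of $P$ lies in $S_\mathbb{X}$; that is, $\partial P \subseteq S_\mathbb{X}$.

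To finish I would argue that $\partial P$ and $S_\mathbb{X}$ are both Jordan curves in the plane (as boundaries of two-dimensional convex bodies) satisfying $\partial P \subseteq S_\mathbb{X}$, and that a proper closed subset of a topological circle cannot itself be a topological circle. Hence $\partial P = S_\mathbb{X}$, and consequently $B_\mathbb{X} = P$. A two-dimensional normed space whose unit ball is a parallelogram is linearly isometric to $\ell_\infty^2$ via the linear map sending the vertices of $P$ to $(\pm 1, \pm 1)$, which yields the desired conclusion.

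The main obstacle is the reverse inclusion $B_\mathbb{X} \subseteq P$, i.e.\ ruling out that $S_\mathbb{X}$ bulges outside the parallelogram; the supporting-functional argument forcing each edge of $P$ onto $S_\mathbb{X}$, combined with the topological rigidity of Jordan curves, is exactly what closes this gap. The trivial inclusion $P \subseteq B_\mathbb{X}$, the attainment of the supremum, and the passage to the coordinates $u, v$ are all routine.
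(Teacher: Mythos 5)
Your proof is correct. It shares its first half with the paper's argument: both extract $x,y\in S_\mathbb{X}$ with $\|x+y\|=\|x-y\|=2$ (the paper asserts attainment, you justify it by compactness), and both rest on the same key fact that a segment joining two unit vectors whose midpoint has norm one must lie entirely in $S_\mathbb{X}$ — you prove this with a supporting functional $f\in J(u)$, the paper gets it from convexity of the norm. Where you diverge is the finish. The paper never mentions the parallelogram: it takes an arbitrary $z=\alpha x_0+\beta y_0$, normalizes, observes that $\frac{\alpha x_0+\beta y_0}{\alpha+\beta}$ lies on the segment $L[x_0,y_0]\subset S_\mathbb{X}$ (for $\alpha,\beta>0$, with sign cases handled symmetrically), and concludes directly that $\|\alpha x_0+\beta y_0\|=|\alpha|+|\beta|$, so that the map $Tx_0=(1,1)$, $Ty_0=(-1,1)$ is an isometry by explicit computation. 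You instead identify $B_\mathbb{X}$ with the parallelogram $P=\mathrm{conv}\{\pm x,\pm y\}$: the inclusion $\partial P\subseteq S_\mathbb{X}$ comes from the midpoint fact applied to the four edges, and equality comes from the topological rigidity statement that a Jordan curve cannot be a proper closed subset of another Jordan curve. That rigidity claim is true (a circle does not embed in a circle minus a point, which is an arc), so your argument closes; what it buys is a clean geometric picture with no sign-by-sign case analysis, at the cost of invoking topology of the plane where the paper needs only a one-line normalization trick. Both yield the same concluding linear map onto $\ell_\infty^2$.
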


\begin{proof}
	Since $\mathbb{X}$ is non uniformly non-square, it follows that there exists $x_0, y_0 \in S_\mathbb{X}$ such that $\min\{\|x_0-y_0\|, \|x_0+y_0\|\} =2,$ i.e., $\|x_0-y_0\| = \|x_0+y_0\| =2.$ Clearly, $x_0 \neq \pm y_0.$ Define a linear map $T : \mathbb{X} \to \ell_\infty^2$ by $Tx_0 =(1, 1)$ and $Ty_0 = (-1, 1).$ Since $\mathbb{X}$ is two-dimensional, for any $z \in \mathbb{X},$  we have $z= \alpha x_0 + \beta y_0,$ where $\alpha, \beta \in \mathbb{R}.$ Then $Tz= T(\alpha x_0 +\beta y_0) = (\alpha-\beta, \alpha + \beta).$ Note that $\|(\alpha-\beta, \alpha+\beta)\|_{\infty} = |\alpha|+|\beta|.$ Thus we only need to show that $\|\alpha x_0 + \beta y_0\| = |\alpha| + |\beta|,$ for any $\alpha, \beta \in \mathbb{R}.$ Since $\frac{1}{2}\|x_0-y_0\|=\frac{1}{2}\|x_0+y_0\|=1,$  $L[x_0, y_0]:=\{(1-t)x_0 + t y_0 : 0 \leq t \leq 1\}$ and $L[x_0, -y_0]$ both are subsets of $S_\mathbb{X}.$   Note that if $\alpha=0$ or $\beta = 0,$ then we are done. Let $\alpha, \beta \neq 0. $ Moreover, assume that $\alpha, \beta > 0.$ Let $z_0 = \frac{\alpha x_0 + \beta y_0}{\|\alpha x_0 + \beta y_0\|}.$ Clearly, $z_0\in S_\mathbb{X}.$ Consider the element $z'=\frac{\|\alpha x_0 + \beta y_0\|}{\alpha + \beta}z_0.$ It is easy to see that $z' \in L[x_0, y_0].$ Since $L[x_0, y_0] \subset S_\mathbb{X},$ it follows that $\frac{\|\alpha x_0 + \beta y_0\|}{\alpha + \beta}=1,$ i.e., $\|\alpha x_0 + \beta y_0\|=\alpha + \beta= |\alpha|+|\beta|.$ Let us now consider $\alpha >0$ and $\beta < 0.$ Then we write $z= \alpha x_0 +\beta y_0 = \alpha x_0 + \beta' (-y_0),$ where $\beta'= -\beta.$ Then we get $\alpha, \beta' > 0.$ Proceeding similarly as above we obtain $\|\alpha x_0 + \beta y_0\|=\|\alpha x_0 + \beta' (-y_0)\|=\alpha + \beta'= |\alpha|+|\beta|.$ Also, the other cases for $\alpha$ and $\beta$ follows similarly as above. This completes the proof. 
	
\end{proof}

It is well known that in a normed linear space the James constant, $J(\mathbb{X})$ studies non uniformly non-squareness of the unit sphere. In the next theorem we obtain a connection between the notion of uniform non-squareness and the constant $\Gamma(\mathbb{X}).$

\begin{theorem}\label{thm1}
	Let $\mathbb{X}$ be a finite-dimensional Banach space. Then $\mathbb{X}$ is uniformly non-square whenever $\Gamma(\mathbb{X}) < \frac{1}{2}.$ 
\end{theorem}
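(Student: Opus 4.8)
The plan is to prove the contrapositive: assuming $\mathbb{X}$ is \emph{not} uniformly non-square, I will show that $\Gamma(\mathbb{X}) = \frac{1}{2}$, which contradicts $\Gamma(\mathbb{X}) < \frac{1}{2}$. Since $\min\{\|x-y\|, \|x+y\|\} \le 2$ always holds on $S_\mathbb{X} \times S_\mathbb{X}$ by the triangle inequality, the failure of uniform non-squareness means $\sup_{x, y \in S_\mathbb{X}} \min\{\|x-y\|, \|x+y\|\} = 2$. Here is where finite-dimensionality enters: $S_\mathbb{X} \times S_\mathbb{X}$ is compact and the map $(x, y) \mapsto \min\{\|x-y\|, \|x+y\|\}$ is continuous, so the supremum is attained. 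Hence there exist $x_0, y_0 \in S_\mathbb{X}$ with $\min\{\|x_0-y_0\|, \|x_0+y_0\|\} = 2$, which forces $\|x_0-y_0\| = \|x_0+y_0\| = 2$ and, in particular, $x_0 \neq \pm y_0$.

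Next I would pass to the two-dimensional subspace $\mathbb{Y} = \operatorname{span}\{x_0, y_0\}$ endowed with the norm restricted from $\mathbb{X}$. The pair $x_0, y_0$ witnesses that $\mathbb{Y}$ is itself non uniformly non-square, so by Lemma \ref{lemma} there is an isometric isomorphism $T : \mathbb{Y} \to \ell_\infty^2$. In $\ell_\infty^2$ I take $u = (1, 1)$ and $v = (0, 1)$; a direct computation along the line $t \mapsto \|u + tv\|_\infty = \max\{1, |1+t|\}$ gives $u \perp_B v$ together with $\rho'_+(u, v) = 1$ and $\rho'_-(u, v) = 0$, hence $\rho'(u, v) = \frac{1}{2}$. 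Setting $x = T^{-1}u$ and $y = T^{-1}v$, and using that an isometric isomorphism preserves the norm along every line, I obtain $x \perp_B y$ and $\rho'(x, y) = \frac{1}{2}$ as elements of $\mathbb{Y}$.

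The step I expect to require the most care is verifying that these witnesses remain valid in the ambient space $\mathbb{X}$. The key observation is that both Birkhoff-James orthogonality $x \perp_B y$ and the norm derivatives $\rho'_{\pm}(x, y)$ depend only on the values $\|x + ty\|$ (for $t$ near $0$, respectively for all $t \in \mathbb{R}$), that is, only on the restriction of the norm to the line $\{x + ty : t \in \mathbb{R}\} \subset \mathbb{Y}$. Since $\mathbb{Y}$ carries precisely the norm induced from $\mathbb{X}$, these quantities computed in $\mathbb{Y}$ coincide with those computed in $\mathbb{X}$; no extra direction of $\mathbb{X}$ can interfere, because the defining condition of $\perp_B$ quantifies only over the direction $y$. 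Consequently $x, y \in S_\mathbb{X}$ satisfy $x \perp_B y$ and $\rho'(x, y) = \frac{1}{2}$, whence $\Gamma(\mathbb{X}) \geq \frac{1}{2}$. Combined with the upper bound $\Gamma(\mathbb{X}) \leq \frac{1}{2}$ from Proposition \ref{bound}, this gives $\Gamma(\mathbb{X}) = \frac{1}{2}$, completing the contrapositive and hence the proof.
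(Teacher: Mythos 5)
Your proof is correct and follows essentially the same route as the paper: use finite-dimensionality to extract $x_0, y_0 \in S_\mathbb{X}$ with $\|x_0-y_0\|=\|x_0+y_0\|=2$, pass to $\mathbb{Y}=\operatorname{span}\{x_0,y_0\}$, apply Lemma \ref{lemma} to identify $\mathbb{Y}$ with $\ell_\infty^2$, exhibit a pair there with $\rho'=\frac{1}{2}$, and conclude via Proposition \ref{bound}. Your explicit verification that $\perp_B$ and $\rho'_{\pm}$ computed in the subspace agree with those computed in $\mathbb{X}$ is precisely the justification the paper leaves implicit when it writes $\Gamma(\mathbb{X}) \geq \Gamma(\mathbb{Y})$.
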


\begin{proof}
	
	Suppose on the contrary that $\mathbb{X}$ is not uniformly non-square. Then from \cite[Th. 3.4]{GL} we note that $J(\mathbb{X})= 2,$ i.e., $\sup \{\min \{\|x+y\|, \|x-y\|\}: x, y\in S_\mathbb{X}\}=2.$ Since $\mathbb{X}$ is finite-dimensional, it follows that there exist $x_0, y_0\in S_\mathbb{X}$ such that $\min \{\|x_0+y_0\|, \|x_0-y_0\|\}=2.$   Clearly, $x_0 \neq \pm y_0.$ Consider the two-dimensional subspace $\mathbb{Y}=span\{x_0, y_0\}.$ Then from Lemma \ref{lemma} it follows that $\mathbb{Y}$ is isometrically isomorphic to $\ell_\infty^2.$ As  $\Gamma(\ell_\infty^2)=\frac{1}{2},$ we get $\Gamma(\mathbb{X}) \geq \Gamma(\mathbb{Y})=\Gamma(\ell_\infty^2)=\frac{1}{2}.$ Thus following Proposition \ref{bound}, we obtain that $\Gamma(\mathbb{X})=\frac{1}{2}.$ This completes the proof of the theorem.
	
\end{proof}

	Let $\mathbb{X}$ be a normed linear space such that $M_{J} \neq \emptyset,$ where 	\[M_{J}:=\{(x, y)\in S_\mathbb{X} \times S_\mathbb{X}: \min\{\|x-y\|, \|x+y\|\} = J(\mathbb{X})\}.\] Then using same arguments as in the proof of Theorem \ref{thm1} we can show that $\mathbb{X}$ is uniformly non-square if $\Gamma(\mathbb{X})< \frac{1}{2}.$

\begin{remark}
	
	(i)   We give an example to show that the result is not true for infinite dimensional space. From   \cite[Th. 1.1]{James}  it follows that a if the unit ball of a normed linear space  is uniformly non-square then the space  is reflexive.  Consider the non-reflexive smooth Banach space $\mathbb{X}$ as given in \cite[Ex. 5.4.13]{M}. 	Then  $\Gamma(\mathbb{X})=0$ (being smooth) but $\mathbb{X}$ is not uniformly non-square (being non-reflexive). \\
	(ii) Next we give an example to show that the converse of Theorem \ref{thm1} is not true, in general. Let us consider the two-dimensional Euclidean space $\mathbb{R}^2,$ endowed with the norm $\ell_1-\ell_\infty.$ Let  $x=(1, 0) \in Ext(B_\mathbb{X})$ and $y=(0, 1).$ Clearly, $x \perp_B y.$ It is easy to calculate that $\rho'(x, y) = \frac{1}{2}.$ Following Proposition \ref{bound} together with Theorem \ref{extreme} we conclude that $\Gamma(\mathbb{X})= \frac{1}{2},$ though $(\mathbb{R}^2, \|\cdot\|_{\ell_1-\ell_\infty})$ is uniformly non-square.

\end{remark}

Applying Theorem \ref{extreme} we next  compute the constant $\Gamma(\mathbb{X})$ for a two-dimensional Banach space whose unit sphere is a regular $2n$-gon. Let us first observe the following proposition.

\begin{prop}\label{decreasing}
	Let $\mathbb{X}$ be a two-dimensional Banach space and let $x \in S_\mathbb{X}.$ Suppose that $w_1, w_2\in S_\mathbb{X}$ satisfying $x \prec w_1 \prec w_2 \prec -x.$ Then $\rho'(x, w_1) \geq \rho'(x, w_2).$
\end{prop}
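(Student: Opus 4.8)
The plan is to exploit the fact that, in a two-dimensional space, $\rho'(x,\cdot)$ is the restriction to $S_\mathbb{X}$ of a single linear functional. First I would observe that $J(x)$ is a compact convex subset of $S_{\mathbb{X}^*}$ sitting inside a two-dimensional dual, so it is the intersection of the line $\{f:f(x)=1\}$ with $B_{\mathbb{X}^*}$ and hence is either a singleton or a segment; in either case $Ext(J(x))=\{f_+,f_-\}$ consists of at most two functionals. By Lemma \ref{functional}, for every $y\in S_\mathbb{X}$ we then have $\rho'_+(x,y)=\max\{f_+(y),f_-(y)\}$ and $\rho'_-(x,y)=\min\{f_+(y),f_-(y)\}$, whence
\[
\rho'(x,y)=\tfrac12\big(f_+(y)+f_-(y)\big)=g(y),\qquad g:=\tfrac12(f_++f_-).
\]
Since $f_\pm\in J(x)\subseteq S_{\mathbb{X}^*}$ and $J(x)$ is convex, $g\in J(x)$, so $g$ is linear with $\|g\|=1$, $g(x)=1$ and $g(-x)=-1$. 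This reduces the whole statement to the purely linear inequality $g(w_1)\ge g(w_2)$.

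Next I would use the ordering $x\prec w_1\prec w_2\prec -x$ to produce two cone decompositions. Since all four rays lie within a single half-turn, the direction of $w_1$ lies strictly inside the pointed cone generated by $x$ and $w_2$, and the direction of $w_2$ lies strictly inside the cone generated by $w_1$ and $-x$. Hence there are scalars $s,t,s',t'>0$ with
\[
w_1=s\,x+t\,w_2,\qquad w_2=s'\,w_1-t'\,x.
\]
Applying the triangle inequality together with $\|x\|=\|w_1\|=\|w_2\|=1$ gives $s+t\ge 1$ and $s'+t'\ge 1$. Writing $D:=g(w_1)-g(w_2)$ and using $g(x)=1$, $g(-x)=-1$ along with $g(w_1)\ge -1$ and $g(w_2)\le 1$, the first decomposition yields $D=s-(1-t)g(w_2)\ge s+t-1\ge 0$ provided $t\le 1$, and the second yields $D=(1-s')g(w_1)+t'\ge s'+t'-1\ge 0$ provided $s'\le 1$.

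The one remaining case is $t>1$ and $s'>1$ holding simultaneously, and this is the crux. I would rule it out by substituting the first decomposition into the second, obtaining $w_2=(s's-t')x+s't\,w_2$; since $x$ and $w_2$ are linearly independent the coefficients must match, forcing $s't=1$. Thus $t>1$ already implies $s'=1/t<1$, so $t>1$ and $s'>1$ cannot both hold, and in every case $D\ge 0$, i.e. $\rho'(x,w_1)\ge\rho'(x,w_2)$. The main obstacle is precisely this last coincidence: neither single decomposition alone controls $D$ (indeed $t>1$ genuinely occurs, for instance already in the Euclidean plane), and it is only the linear-dependence identity $s't=1$ that closes the argument. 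A secondary point to handle with care is the degenerate geometry --- $x$ smooth, so $f_+=f_-$, or a flat face through $x$ or $-x$ --- but these are absorbed automatically, since both the representation $\rho'(x,\cdot)=g$ and the two decompositions remain valid throughout.
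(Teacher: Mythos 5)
Your proposal is correct, but it takes a genuinely different route from the paper. The paper's proof is a short application of the Monotonicity lemma (Lemma \ref{lem; mon}): since the ray $[0,w_1\rangle$ lies between $[0,x\rangle$ and $[0,w_2\rangle$, one gets $\|x+tw_1\|\geq \|x+tw_2\|$ for $t>0$ and $\|x+tw_1\|\leq \|x+tw_2\|$ for $t<0$, whence $\rho'_{+}(x,w_1)\geq \rho'_{+}(x,w_2)$ and $\rho'_{-}(x,w_1)\geq \rho'_{-}(x,w_2)$ separately, and averaging gives the claim. You instead linearize: using Lemma \ref{functional} and the fact that in a two-dimensional space $J(x)$ is a point or a segment, you identify $\rho'(x,\cdot)$ on $S_\mathbb{X}$ with the single norm-one supporting functional $g=\tfrac{1}{2}(f_++f_-)\in J(x)$, and then show that such a functional is monotone along the arc from $x$ to $-x$ via the two cone decompositions $w_1=sx+tw_2$ and $w_2=s'w_1-t'x$, closed off by the linear-dependence identity $s't=1$. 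Each approach buys something: the paper's argument is shorter and proves the stronger fact that both one-sided derivatives $\rho'_{\pm}(x,\cdot)$ are separately monotone, not just their average; your argument is self-contained (it replaces the geometric monotonicity lemma by an elementary algebraic case analysis) and makes explicit the structural fact that $\rho'(x,\cdot)$ is linear on the sphere in dimension two --- a fact the paper itself exploits later, e.g.\ in the proof of Theorem \ref{two-sym}, where $y\perp_\rho w$ if and only if $w\in\ker(g+h)$. In effect, your cone computation reproves the special case of the monotonicity lemma needed for linear functionals, and the identity $s't=1$ is precisely what substitutes for its geometric content; your handling of the degenerate cases (smooth $x$, flat faces) is also sound, since the representation by $g$ and both decompositions survive them.
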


\begin{proof}
	Note that the ray $[0, w_1\rangle$ lies in between the rays $[0, x\rangle$ and  $[0, w_2\rangle.$ Then applying Lemma \ref{lem; mon} we obtain the following:
	\begin{itemize}
		\item[(1)] $\|x+ tw_1\| \geq \|x+tw_2\|,$ when $t > 0.$
		\item[(2)] $\|x+tw_1\| \leq \|x+tw_2\|,$ when $t<0.$
	\end{itemize}
	Therefore, it is easy to observe from (1) and (2) that 
	\[\lim_{t \to 0^{\pm}} \frac{\|x+tw_1\|-1}{t} \geq \lim_{t \to 0^{\pm}} \frac{\|x+tw_2\|-1}{t}.\] This implies that $\rho'_{\pm}(x, w_1) \geq \rho'_{\pm}(x, w_2)$ and consequently we conclude that $\rho'(x, w_1) \geq \rho'(x, w_2).$ 
	
\end{proof}	

We next note that in a two-dimensional Banach space $ \mathbb{X},$ $ x^{\perp} $ can be described in terms of a normal cone $K.$ To be precise, $ x^{\perp} = K \cup (-K).$ Let us recall that a subset $K$ of $\mathbb{X}$ is said to be a normal cone in $\mathbb{X}$ if it satisfies the following: 
\begin{itemize}
	\item[(i)] $K+K \subset K$
	 \item[(ii)] $\alpha K \subset K,$ for all $\alpha \geq 0$ and
	 \item[(iii)] $K \cap (-K) = \{0\}.$
	\end{itemize}
	  We say that the cone $K$ is determined by $x_1, x_2 \in S_\mathbb{X}$ if  $K \cap S_\mathbb{X}=\bigg\{\frac{(1-t)x_1+tx_2}{\|(1-t)x_1+tx_2\|}: 0 \leq t \leq 1\bigg\}.$ In particular, $K=\{\alpha x_1 +\beta x_2: \alpha, \beta \geq 0\},$  see \cite{SPM}.  	In the following lemma we explicitly find the points  which determine the cone corresponding to the orthogonal region of a vertex for a regular $2n$-gon. 

\begin{lemma}\label{lem3}
	Let $\mathbb{X}$ be a two-dimensional Banach space whose unit sphere is a regular $2n$-gon. Let $\{v_1, v_2, \ldots, v_{2n}\}$ be the vertices of $B_\mathbb{X}$ such that for each $1 \leq j \leq 2n,$ $v_j=(\cos\frac{j-1}{n}\pi, \sin\frac{j-1}{n}\pi).$  Suppose that for some $m \in \{1, 2, \ldots, 2n\},$ $v_m^{\perp} = K \cup (-K),$ where $K$ is a normal cone determined by $w_1$ and $ w_2.$ Then the following holds true:
	\begin{itemize}
		\item[(i)] $w_1= v_{\frac{n+2m-1}{2}}$ and $w_2 = v_{\frac{n+2m+1}{2}},$ when $n$ is odd.
		\item[(ii)] $w_1=\frac{1}{2}(v_{\frac{n+2m-2}{2}}+v_{\frac{n+2m}{2}})$ and $w_2= \frac{1}{2}(v_{\frac{n+2m}{2}}+v_{\frac{n+2m+2}{2}}),$ when $n$ is even.
	\end{itemize} 
\end{lemma}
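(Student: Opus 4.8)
The plan is to reduce the determination of $w_1, w_2$ to identifying the two extreme supporting functionals at the vertex $v_m$ and then reading off the boundary rays of the normal cone $K$ from the geometry of the $2n$-gon. Recall that $v_m \perp_B y$ holds iff $f(y)=0$ for some $f \in J(v_m)$, and that at a vertex of a polygonal unit ball $J(v_m)$ is a segment in $S_{\mathbb{X}^*}$ whose two extreme points, say $f_L$ and $f_R$, are the functionals attaining the value $1$ on the whole edges $[v_{m-1},v_m]$ and $[v_m,v_{m+1}]$ adjacent to $v_m$ (indices read modulo $2n$). As $f$ runs from $f_L$ to $f_R$, the line $\ker f$ rotates monotonically, so the double cone $v_m^{\perp}=K\cup(-K)$ has boundary rays $\ker f_L$ and $\ker f_R$. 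Since $f_L$ is constant on $[v_{m-1},v_m]$ we get $f_L(v_m - v_{m-1})=0$, hence $\ker f_L$ is the line spanned by the edge vector $v_m-v_{m-1}$, and likewise $\ker f_R$ is spanned by $v_{m+1}-v_m$. Thus the two rays determining $K$ are parallel to the two edges meeting at $v_m$, and it remains only to locate these edge directions among the data of the polygon.

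Next I would compute the edge directions explicitly. Writing $\theta_j=\frac{j-1}{n}\pi$ for the angular position of $v_j$, a short half-angle computation gives $v_m-v_{m-1}=2\sin\tfrac{\pi}{2n}\,\bigl(-\sin\tfrac{\theta_m+\theta_{m-1}}{2},\cos\tfrac{\theta_m+\theta_{m-1}}{2}\bigr)$, so up to the positive scalar $2\sin\tfrac{\pi}{2n}$ the chord $v_m-v_{m-1}$ is the unit vector at angle $\tfrac{\theta_m+\theta_{m-1}}{2}+\tfrac{\pi}{2}=\tfrac{n+2m-3}{2n}\pi$; the same computation gives that $v_{m+1}-v_m$ points at angle $\tfrac{n+2m-1}{2n}\pi$. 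The vertices sit at the even multiples of $\tfrac{\pi}{2n}$, whereas the arc-midpoint angles $\tfrac{2m-3}{2n}\pi$ and $\tfrac{2m-1}{2n}\pi$ are odd multiples of $\tfrac{\pi}{2n}$. The whole argument now turns on the parity of $n$: adding $\tfrac{\pi}{2}=\tfrac{n}{2n}\pi$ shifts an odd multiple of $\tfrac{\pi}{2n}$ by $n$ units, landing on an even multiple (a vertex direction) when $n$ is odd and on an odd multiple (an edge-midpoint direction) when $n$ is even.

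In the odd case I would solve $\tfrac{n+2m-3}{2n}\pi=\tfrac{2(k-1)}{2n}\pi$ to get $k=\tfrac{n+2m-1}{2}\in\mathbb{Z}$, identifying the first boundary ray with $v_{\frac{n+2m-1}{2}}$, and the second likewise with $v_{\frac{n+2m+1}{2}}$, which is (i). In the even case the direction at angle $\tfrac{n+2m-3}{2n}\pi$ lies exactly halfway between the consecutive vertices $v_{\frac{n+2m-2}{2}}$ and $v_{\frac{n+2m}{2}}$, and by the symmetry of the regular polygon the bisecting ray is spanned by the chord midpoint $\tfrac{1}{2}\bigl(v_{\frac{n+2m-2}{2}}+v_{\frac{n+2m}{2}}\bigr)$; the analogous computation for the second edge gives (ii). Finally I would fix the orientation by checking the cross-product inequalities $v_m \prec w_1 \prec w_2 \prec -v_m$, which compare the above angles and reduce to $1<n$, so that $w_1,w_2$ in the stated order genuinely determine $K$ and not $-K$.

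The hard part will not be the geometry, which is forced once the boundary rays are recognized as the adjacent edges, but the index bookkeeping: making the parity dichotomy airtight, confirming that $\tfrac{n+2m\pm1}{2}$ and the even-case labels are legitimate vertex indices modulo $2n$ for every admissible $m$, and verifying the $\prec$-ordering so that the correct cone and the correct assignment of $w_1$ versus $w_2$ are selected.
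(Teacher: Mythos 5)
Your proof is correct, and it reaches the lemma by a cleaner route than the paper's, although both share the same skeleton: identify $Ext(J(v_m))$ with the two functionals supporting the edges adjacent to $v_m$, and recognize $w_1, w_2$ as the points where their kernels meet $S_{\mathbb{X}}$. The difference lies in how those kernels are located. The paper writes down the explicit formula for the supporting functional of each edge of the regular $2n$-gon, parametrizes an unknown $w_1=(1-\lambda)v_j+\lambda v_{j+1}$ on an unspecified edge, and solves the resulting trigonometric equation $(1-\lambda)\cos\frac{2j-2m+1}{2n}\pi + \lambda\cos\frac{2j-2m+3}{2n}\pi = 0$; the admissible $j$ and $\lambda$ then fall out of a case analysis in which $\lambda\in\{0,1\}$ (a vertex) occurs for odd $n$ and $\lambda=\frac{1}{2}$ (an edge midpoint) for even $n$. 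You instead observe that, since $f_L$ is constant on the edge $[v_{m-1},v_m]$, its kernel is spanned by the edge vector $v_m-v_{m-1}$; this replaces the equation-solving by the half-angle computation of the edge direction and a parity count of multiples of $\frac{\pi}{2n}$, and your resulting indices agree exactly with (i) and (ii). Your observation buys two things: it shows that the boundary rays of $v_m^{\perp}$ are parallel to the adjacent edges for an \emph{arbitrary} polygonal norm (regularity is used only afterwards, to match those two directions with vertices or edge midpoints), and it removes the need to guess which edge carries $w_1$. What remains for you to make airtight is exactly what you flag: the mod-$2n$ index bookkeeping and the $\prec$-ordering that fixes which of the two antipodal cones is $K$ --- a point the paper also treats lightly, by absorbing the antipodal solutions (its case $t=1$) into $-K$ rather than verifying an orientation.
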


\begin{proof}
	By a straightforward computation, one can observe that given any $i \in \{1, 2, \ldots, 2n\}$ and for any $u \in [v_i, v_{i+1}],$ the supporting functional of $u$ is given by 
	\begin{equation}
		f(x, y)= \frac{1}{\cos \frac{\pi}{2n}} \bigg(x \cos\frac{2i-1}{2n} \pi+ y \sin\frac{2i-1}{2n} \pi\bigg),
	\end{equation}
	for any $(x, y) \in \mathbb{X}.$ Thus $Ext(J(v_m))=\{f_1, f_2\},$ where 	
	\[f_1(x, y)= \frac{1}{\cos \frac{\pi}{2n}} \bigg(x \cos\frac{2m-3}{2n}\pi+ y \sin\frac{2m-3}{2n}\pi\bigg) \] and \[f_2(x, y)=\frac{1}{\cos \frac{\pi}{2n}}\bigg(x \cos\frac{2m-1}{2n} \pi+ y \sin\frac{2m-1}{2n}\pi\bigg),\] for any $(x, y)\in \mathbb{R}.$  Suppose that $\ker f_i \cap S_\mathbb{X} = \{\pm w_i\},$ for each $i\in \{1, 2\}.$ Observe that $v_m^{\perp} = K \cup (-K),$ where the normal cone $K$ is determined by $\{w_1, w_2\}.$ 	We only find $w_1$ as $w_2$ can be obtained analogously. 	Let $w_1= (1- \lambda)v_j + \lambda v_{j+1} =  (1-\lambda) \big(\cos\frac{j-1}{n}\pi, \sin \frac{j-1}{n}\pi\big) + \lambda\big(\cos\frac{j\pi}{n}, \sin \frac{j\pi}{n}\big),$ for some $\lambda \in [0, 1]$ and for some $j \in \{1, 2, \ldots, 2n\}.$ Since $f_1(w_1)=0,$ it follows by a simple computation that the following equation holds true:
	\begin{equation}
		(1-\lambda) \cos\frac{2j-2m+1}{2n} \pi+\lambda\cos\frac{2j-2m+3}{2n}\pi =0.
	\end{equation}
	From the above equation, it follows that  $\frac{2j-2m+1}{2n}\pi \leq \frac{2t+1}{2}\pi \leq \frac{2j-2m+3}{2n}\pi,$ for some $t \in \mathbb{N}\cup \{0\}.$ This implies that $\frac{(2t+1)n-3}{2}+m \leq j \leq \frac{(2t+1)n-1}{2}+m.$ Since $1 \leq j \leq 2n$ and $n\geq 2,$ it is easy to see that $t \in \{0, 1\}.$ Suppose that $t=0.$  Now we note the following two cases:\\
	\emph{Case-I}: Suppose that $n$ is odd. Then clearly, either $j=\frac{n-3}{2}+m$ or $j=\frac{n-1}{2}+m.$ Then putting these values in equation (2) we obtain  $\lambda=1$ or $\lambda =0,$ respectively. In both these cases we obtain $w_1=v_{\frac{n-1}{2}+m}.$ Thus we get $w_1 = v_{\frac{n+2m-1}{2}}.$  Similarly, we  get  $w_2 = v_{\frac{n+2m+1}{2}}.$ \\
	\emph{Case-II}: Suppose that $n$ is even. Then one can observe that $j = \frac{n-2}{2}+m.$ From equation (2) it follows that $\lambda=\frac{1}{2}.$ Then  $w_1= \frac{1}{2}\big(v_{\frac{n+2m-2}{2}}+v_{\frac{n+2m}{2}}\big).$ Proceeding as before we get $w_2= \frac{1}{2}\big(v_{\frac{n+2m}{2}}+v_{\frac{n+2m+2}{2}}\big).$ This proves (ii).
	
	If $t=1,$ then it is easy to see that $w_1=-v_{\frac{n+2m-1}{2}}$ and $w_2= -v_{\frac{n+2m+1}{2}},$ when $n$ is odd. On the other hand, when $n$ is even, we get $w_1=-\frac{1}{2}\big(v_{\frac{n+2m-2}{2}}+v_{\frac{n+2m}{2}}\big)$ and $w_2= -\frac{1}{2}\big(v_{\frac{n+2m}{2}}+v_{\frac{n+2m+2}{2}}\big).$ This shows that $K \cup (-K)$ is completely determined by $w_1, w_2$ as given in (i) and (ii). This completes the proof of the lemma.
	\end{proof}

In the following theorem we compute the value of $\Gamma(\mathbb{X})$ whenever $\mathbb{X}$ is a two-dimensional Banach space whose unit sphere is a regular $2n$-gon.

\begin{theorem}\label{th}
	Let $\mathbb{X}$ be a two-dimensional Banach space and let $S_\mathbb{X}$ be a regular $2n$-gon, where $n \geq 2.$ Then the following results hold true:
	\begin{itemize}
		
		\item[(i)] $\Gamma(\mathbb{X})= \frac{\cos\frac{n-2}{2n}\pi}{2\cos\frac{\pi}{2n}},$ when $n$ is odd.
		\item[(iI)] $\Gamma(\mathbb{X})=\frac{1}{4\cos\frac{\pi}{2n}}\bigg(\cos\frac{n-3}{2n}\pi+ \cos\frac{n-1}{2n}\pi\bigg),$ when $n$ is even.
		
	\end{itemize}
\end{theorem}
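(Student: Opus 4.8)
The plan is to combine the three structural results already in hand---Theorem \ref{extreme}, Proposition \ref{decreasing}, and Lemma \ref{lem3}---to collapse the supremum defining $\Gamma(\mathbb{X})$ to the evaluation of $\rho'$ at two explicit points, and then to finish with an elementary trigonometric simplification. First I would invoke Theorem \ref{extreme} to assert that the supremum is attained at a vertex: there exist an index $m$ and $w \in S_\mathbb{X}$ with $v_m \perp_B w$ such that $\Gamma(\mathbb{X}) = \rho'(v_m, w)$. Since $S_\mathbb{X}$ is a regular $2n$-gon, it is invariant under rotation by $\frac{\pi}{n}$, which sends $v_j$ to $v_{j+1}$; this rotation preserves the unit ball and is therefore a linear isometry, and both $\perp_B$ and $\rho'$ are isometry invariant. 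Consequently $\sup\{\rho'(v_m, w): w \in S_\mathbb{X},\, v_m \perp_B w\}$ does not depend on $m$, so it suffices to fix one vertex $v_m$ and maximize $\rho'(v_m, \cdot)$ over its orthogonal set.

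Next, by Lemma \ref{lem3} this orthogonal set is $(K \cup (-K)) \cap S_\mathbb{X}$, where the cone $K$ is determined by the explicit generators $w_1, w_2$. A direct consequence of the definition is $\rho'(v_m, -w) = -\rho'(v_m, w)$, so the supremum of $\rho'(v_m, \cdot)$ over $K \cup (-K)$ reduces to understanding its behaviour on the single arc $K \cap S_\mathbb{X}$ running from $w_1$ to $w_2$. On that arc I would apply Proposition \ref{decreasing}: since $v_m \prec w_1 \prec w \prec w_2 \prec -v_m$ for every $w$ in the arc, $\rho'(v_m, \cdot)$ is non-increasing there, whence its extreme values occur exactly at the endpoints $w_1$ and $w_2$. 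Thus the entire problem collapses to computing $\rho'(v_m, w_1)$ and $\rho'(v_m, w_2)$.

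For the final computation I would use Lemma \ref{functional} with the two extreme supporting functionals $f_1, f_2$ of $v_m$ exhibited in the proof of Lemma \ref{lem3} (their kernels passing through $w_1$ and $w_2$ respectively). Since $f_1(w_1)=0$ by construction, evaluating $f_2(w_1)$ via the identity $\cos A \cos B + \sin A \sin B = \cos(A-B)$ gives, in the odd case (where $w_1 = v_{(n+2m-1)/2}$ is a vertex), $f_2(w_1) = \frac{1}{\cos\frac{\pi}{2n}}\cos\frac{n-2}{2n}\pi > 0$; hence $\rho'_+(v_m, w_1) = f_2(w_1)$, $\rho'_-(v_m, w_1)=0$, and $\rho'(v_m, w_1) = \tfrac12 f_2(w_1)$, which is precisely statement (i). In the even case $w_1 = \tfrac12\big(v_{(n+2m-2)/2}+v_{(n+2m)/2}\big)$ is an edge-midpoint, so $f_2(w_1)$ becomes the average of $f_2$ at the two adjacent vertices, producing the sum $\cos\frac{n-3}{2n}\pi + \cos\frac{n-1}{2n}\pi$ of statement (ii). The analogous evaluation at $w_2$ yields $\rho'(v_m, w_2) = -\rho'(v_m, w_1)$, confirming that both endpoints give the same value of $|\rho'|$ and that the supremum equals $\rho'(v_m, w_1)$.

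I expect the main obstacle to be bookkeeping rather than conceptual. Two checks require care: verifying that the orientation hypothesis $v_m \prec w_1 \prec w_2 \prec -v_m$ of Proposition \ref{decreasing} genuinely holds for the generators supplied by Lemma \ref{lem3}, and keeping the odd/even index arithmetic consistent with the angle-difference simplifications (repeated use of $\cos(A-B)$ and $\cos(\tfrac{\pi}{2}-\theta)=\sin\theta$) across both parities. Once the correct extreme functionals are substituted, the cosine identities reduce everything to the two stated closed forms.
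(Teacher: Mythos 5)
Your proposal is correct and follows essentially the same route as the paper's proof: Theorem \ref{extreme} plus rotation invariance to reduce to a single vertex, Lemma \ref{lem3} for the cone generators, Proposition \ref{decreasing} to push the extremum to the endpoints $w_1, w_2$, and Lemma \ref{functional} with the explicit extreme functionals and the $\cos(A-B)$ identity for the closed forms. Your explicit use of the oddness $\rho'(x,-y)=-\rho'(x,y)$ to dispose of $-K$ is a small clarification the paper leaves implicit, but otherwise the arguments coincide step for step.
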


\begin{proof}
	Suppose $\mathbb{X}$ is such that $S_\mathbb{X}$ is a regular $2n$-gon with the vertices $v_1, v_2, \ldots, v_{2n},$ where  $v_j =\bigg( \cos\frac{j-1}{n}\pi, \sin\frac{j-1}{n}\pi\bigg),$ for each $j \in \{1, 2, \ldots, 2n\}.$ Moreover, from Theorem \ref{extreme} there exists an element $z \in Ext(B_\mathbb{X})$ such that $\rho'(z, y)= \Gamma(\mathbb{X}),$ for some $y \in S_\mathbb{X} $ with $z\perp_B y.$ Note that in this case for any $k \in \mathbb{N},$  $\frac{k\pi}{n}$- rotation is an isometric isomorphism on $\mathbb{X}.$ Since Birkhoff-James orthogonality is preserved under isometric isomorphism \cite{Koldobsky}, we only find $\rho'(z, y)$ for a fixed vertex $z,$ where $y \in z^{\perp}$. Without loss of generality we may indeed assume that  $z=v_1 =(1, 0).$ 
	Suppose that $v_1^{\perp}=K \cup (-K),$ where $K$ is a normal cone determined by $y_1, y_2.$ Let us take $y\in K \cap S_\mathbb{X}.$ Note that $v_1 \prec y_1 \preceq y \preceq  y_2 \prec -v_1.$ From Proposition \ref{decreasing}, $\rho'(v_1, y_1) \geq \rho'(v_1, y) \geq \rho'(v_1, y_2).$ Therefore, $\Gamma(\mathbb{X}) = \max \{|\rho'(v_1, y_1)|, |\rho'(v_1, y_2)|\}.$ From the definition it is easy to verify that $\rho'_{+}(v_1, y_2) = \rho'_{-}(v_1, y_1)=0.$ Thus we only find the values of $\rho'_{+}(v_1, y_1)$ and $\rho'_{-}(v_1, y_2).$ We consider the following two cases:

	\textbf{Case I:} Suppose that $n$ is odd. From Lemma \ref{lem3} we see that $y_1= v_{\frac{n+1}{2}}$  and $y_2=v_{\frac{n+3}{2}}.$ Let $Ext(J(v_1))=\{f_1, f_2\},$ where $\ker f_i=\{\pm y_i\},$ for each $1 \leq i \leq 2.$ From Equation (1) we observe that $f_1(x, y)= x - y \tan\frac{\pi}{2n}$ and $f_2(x, y)=x+ y\tan\frac{\pi}{2n},$ for all $x, y\in \mathbb{R}.$ Therefore,  by Lemma \ref{functional} we have
\[
\rho'_+(v_1, y_1)=f_2(y_1)= \cos\frac{n-1}{2n}\pi + \tan\frac{\pi}{2n} \sin\frac{n-1}{2n}\pi.	\]
By simplifying, the above equation reduces to 
\[\rho'_+(v_1, y_1)=f_2(y_1) = \frac{\cos\frac{n-2}{2n}\pi}{\cos\frac{\pi}{2n}} .\]
Also, \[\rho'_-(v_1, y_2)=f_1(y_2) = -\frac{\cos\frac{n-2}{2n}\pi}{\cos\frac{\pi}{2n}} .\] Considering these together we get:
\[
|\rho'(v_1, y_1)|=|\rho'(v_1, y_2)|= \frac{\cos\frac{n-2}{2n}\pi}{2\cos\frac{\pi}{2n}}.
\]
This proves (i).\\

   \textbf{ Case-II:} Suppose that $n$ is even. Then from Lemma \ref{lem3} we get $y_1= \frac{1}{2}(v_{\frac{n}{2}}+v_{\frac{n+2}{2}})$ and $y_2=\frac{1}{2}(v_{\frac{n+2}{2}}+v_{\frac{n+4}{2}}).$ Let $Ext(J(v_1))=\{f_1, f_2\},$ where $f_1, f_2$ are same as in Case-I. Then 
	\[\rho'_+(v_1, y_1)=f_1(y_2) = \frac{1}{2\cos\frac{\pi}{2n}}\bigg(\cos\frac{n-3}{2n}\pi + \cos\frac{n-1}{2n}\pi\bigg).\]
	Proceeding similarly we obtain that
	\[
	\rho'_{-}(v_1, y_2) = f_2(y_1) = -\frac{1}{2\cos\frac{\pi}{2n}}\bigg(\cos\frac{n-3}{2n}\pi + \cos\frac{n-1}{2n}\pi\bigg).
	\] Thus we see that 
	\[\Gamma(\mathbb{X})= \max\{|\rho'(v_1, y_1)|, | \rho'(v_1, y_2)|\} = |\rho'(v_1, y_1)|= \frac{1}{2}\rho'_{+}(v_1, y_1). \]
	This proves (ii).

	 Hence the proof of the theorem is completed.
\end{proof}

 Let us now calculate the value of $\Gamma(\mathbb{X}),$ for some particular two-dimensional Banach spaces.

\begin{example}
	\begin{itemize}
\item[(i)] Let $\mathbb{X}$ be a Banach space such that $S_\mathbb{X}$ is a regular octagon. Then we have $n=4.$ Applying Theorem \ref{th}(i) we have $\Gamma(\mathbb{X})=\frac{1}{2\sqrt{2}}.$

\item[(ii)]	Let $\mathbb{X}$ be a two-dimensional Banach space, endowed with the norm $\ell_p-\ell_1.$ For any $(x, y)\in \mathbb{X},$ 
	\begin{eqnarray*}
		\|(x, y)\| &=& (|x|^p+|y|^p)^{\frac{1}{p}}, \, \mbox{whenever} \, xy\geq 0\\
		&=& (|x| + |y|), \, \mbox{whenever} \, xy\leq 0 
	\end{eqnarray*}
	
	Then $\Gamma(\mathbb{X})=\frac{1}{2},$ where $1 \leq p\leq \infty$.
	It is clear that $e_1=(1, 0), e_2=(0, 1) \in \mathbb{X}.$ Moreover, $\|(1, 0)\|=\|(0, 1)\| = 1.$   Note that $\rho'_+(e_1, e_2)=\lim_{t \to 0^+} \frac{(1+t^p)^{\frac{1}{p}} - 1}{t}.$ Thus we obtain that $\rho'_+(e_1, e_2) = 0.$ On the other hand, $\rho'_-(e_1, e_2) = \lim_{t \to 0^-} \frac{1 +|t| -1}{t}.$ This implies that $\rho'_{-}(e_1, e_2) = -1.$ Therefore, $|\rho'(e_1, e_2)|=\frac{1}{2}.$	Similarly, we can show that $\Gamma(\ell_p^2-\ell_\infty^2)= \frac{1}{2}.$
	\end{itemize}
\end{example}

We end this section with the estimation of the constant $\Gamma(\mathbb{X})$ for uniformly convex Banach spaces.

\begin{theorem}\label{uc}
	Let $\mathbb{X}$ be a uniformly convex Banach space. Then $\Gamma(\mathbb{X}) < \frac{1}{2}.$
\end{theorem}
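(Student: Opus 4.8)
The plan is to establish the sharper quantitative estimate
\[
\Gamma(\mathbb{X}) \le \tfrac{1}{2} - \delta_\mathbb{X}(1),
\]
from which the conclusion is immediate: uniform convexity forces $\delta_\mathbb{X}(1) > 0$, and hence $\Gamma(\mathbb{X}) < \tfrac12$. Fix arbitrary $x, y \in S_\mathbb{X}$ with $x \perp_B y$. Exactly as in the proof of Proposition \ref{bound}, weak*-compactness of $J(x)$ together with Lemma \ref{functional} produces functionals $f_0, g_0 \in J(x)$ with $\rho'_+(x,y) = f_0(y)$ and $\rho'_-(x,y) = g_0(y)$; by Lemma \ref{B-J} these satisfy $f_0(y) \ge 0 \ge g_0(y)$. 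Since $a \ge 0 \ge b$ implies $|a + b| \le \max\{a, -b\}$, I record the elementary bound
\[
|\rho'(x,y)| = \tfrac{1}{2}\bigl| f_0(y) + g_0(y)\bigr| \le \tfrac{1}{2}\max\bigl\{ f_0(y),\, |g_0(y)|\bigr\}.
\]

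The key geometric input is that Birkhoff-James orthogonality controls $\|x \pm y\|$ from below: taking $\lambda = \pm 1$ in the definition of $x \perp_B y$ gives $\|x+y\| \ge 1$ and $\|x - y\| \ge 1$. Now I feed this into the modulus of convexity. Since $f_0 \in J(x)$ we have $f_0(x) = 1$, so
\[
\frac{1 + f_0(y)}{2} = f_0\!\left(\frac{x+y}{2}\right) \le \left\| \frac{x+y}{2}\right\| \le 1 - \delta_\mathbb{X}(1),
\]
where the last inequality holds because the pair $(x, y)$ satisfies $\|x - y\| \ge 1$ and is therefore admissible for the infimum defining $\delta_\mathbb{X}(1)$. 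This yields $f_0(y) \le 1 - 2\delta_\mathbb{X}(1)$. Applying the same reasoning to the pair $(x, -y)$, whose difference $\|x - (-y)\| = \|x + y\| \ge 1$, and using $g_0(x) = 1$, I obtain $\frac{1 + |g_0(y)|}{2} = g_0\!\left(\frac{x-y}{2}\right) \le 1 - \delta_\mathbb{X}(1)$, hence $|g_0(y)| \le 1 - 2\delta_\mathbb{X}(1)$.

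Combining these two bounds with the elementary inequality above gives $|\rho'(x,y)| \le \frac12\bigl(1 - 2\delta_\mathbb{X}(1)\bigr) = \frac12 - \delta_\mathbb{X}(1)$ for every admissible pair, and taking the supremum over all such pairs proves the claimed estimate. The only genuinely delicate point is inserting the correct pair into $\delta_\mathbb{X}$: the bound on $f_0(y)$ needs $\|x-y\| \ge 1$ in order to control $\|x+y\|$, while the bound on $|g_0(y)|$ needs $\|x+y\| \ge 1$ in order to control $\|x-y\|$, and both lower bounds are exactly what $x \perp_B y$ supplies. An alternative, non-quantitative route is a contradiction argument: if $\Gamma(\mathbb{X}) = \frac12$ one extracts sequences $x_n, y_n \in S_\mathbb{X}$ with $x_n \perp_B y_n$ and, after passing to a subsequence, $f_n(y_n) \to 1$ (or symmetrically $g_n(y_n)\to -1$), forcing $\|x_n + y_n\| \to 2$ and hence $\|x_n - y_n\| \to 0$ by uniform convexity; but any $h_n \in J(x_n)$ with $h_n(y_n) = 0$ satisfies $h_n(x_n - y_n) = 1$, which is incompatible with $\|x_n - y_n\| \to 0$. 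I prefer the direct estimate, since it also exhibits the explicit spectral gap $\delta_\mathbb{X}(1)$ between $\Gamma(\mathbb{X})$ and $\tfrac12$.
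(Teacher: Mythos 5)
Your argument is correct, and although it shares the paper's key mechanism (Birkhoff--James orthogonality gives $\|x\pm y\|\ge 1$, and supporting functionals convert $\rho'_{\pm}$ into statements about $\|x+y\|$ and $\|x-y\|$), the deduction is genuinely different and yields more. The paper argues by contradiction: assuming $\Gamma(\mathbb{X})=\frac{1}{2}$, it extracts sequences with $x_n\perp_B y_n$ and (up to symmetry) $\rho'_+(x_n,y_n)\to 1$, deduces $\|x_n+y_n\|\to 2$ via Lemma \ref{functional}, and then invokes the sequential characterization of uniform convexity (citing Megginson, Prop.~5.2.8) to force $\|x_n-y_n\|\to 0$, contradicting $\|x_n-y_n\|\ge 1$. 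You instead fix one admissible pair and plug $(x,y)$ and $(x,-y)$ directly into the infimum defining $\delta_\mathbb{X}(1)$ --- both pairs admissible precisely because $\|x-y\|\ge 1$ and $\|x+y\|\ge 1$ --- obtaining $f_0(y)\le 1-2\delta_\mathbb{X}(1)$ and $|g_0(y)|\le 1-2\delta_\mathbb{X}(1)$; your ``crossed'' insertion ($\|x-y\|\ge 1$ controls $\|x+y\|$ and vice versa) is exactly right, the elementary bound $|a+b|\le \max\{a,-b\}$ for $a\ge 0\ge b$ is valid, and the existence of the extremizing functionals $f_0,g_0\in J(x)$ is justified just as in Proposition \ref{bound}. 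Taking suprema gives $\Gamma(\mathbb{X})\le \frac{1}{2}-\delta_\mathbb{X}(1)$, which with $\delta_\mathbb{X}(1)>0$ proves the theorem. What your route buys: a strictly stronger, quantitative conclusion (an explicit uniform gap below $\frac{1}{2}$ measured by the modulus of convexity), with no contradiction argument, no subsequence bookkeeping, and no appeal to the external sequential criterion --- only the definition of $\delta_\mathbb{X}$. What the paper's route buys: a shorter, softer argument that never manipulates the modulus explicitly. Note finally that the ``alternative, non-quantitative route'' you sketch at the end is essentially the paper's own proof, so you have in effect reproduced it and then sharpened it.
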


\begin{proof}
	Suppose on the contrary that $\Gamma(\mathbb{X})=\frac{1}{2}.$ Then there exist two sequences $\{x_n\}_{n \in \mathbb{N}}$, $\{y_n\}_{n \in \mathbb{N}} \subset S_\mathbb{X}$ such that $x_n \perp_B y_n$ and $|\rho'(x_n, y_n)| \to \frac{1}{2}.$ Since $x_n \perp_B y_n,$ it follows from Lemma \ref{B-J} that,  for each $n \in \mathbb{N},$  $-1 \leq \rho'_-(x_n, y_n) \leq 0 \leq\rho'_+(x_n, y_n)\leq 1.$ This implies that either of the following two holds true:
	\begin{itemize}
		\item[(1)] $\rho'_+(x_n, y_n) \to 1$ and $\rho'_-(x_n, y_n) \to 0,$ as $n \to \infty.$
		\item[(2)] $\rho'_+(x_n, y_n) \to 0$ and $\rho'_-(x_n, y_n) \to -1,$ as $n \to \infty.$
	\end{itemize}
	Without loss of generality we assume that (1) holds true. Then from Lemma \ref{functional}, we have $\lim_{n \to \infty}\big\{\sup\{f_{x_n}(y_n) : f_{x_n}\in Ext(J(x_n))\}\big\}=1.$ Then for each $n \in \mathbb{N},$ $\|x_n+y_n\| \geq |f_{x_n}(x_n +y_n)| \geq 1+ f_{x_n}(y_n).$ Thus
	\begin{eqnarray*}
		\|x_n + y_n\| &\geq& \sup\{1+ f_{x_n}(y_n): f_{x_n} \in Ext(J(x_n))\}\\
		&=& 1+ \sup\{f_{x_n}(y_n): f_{x_n} \in Ext(J(x_n))\}.
	\end{eqnarray*}
		Taking limit on the both side of the above inequality, we get that $\lim_{n \to \infty}\|x_n+y_n\| \geq 2.$ Also, we have $\|x_n +y_n\|\leq 2,$ for each $n.$ This implies that $\lim_{n \to \infty}\|x_n+y_n\| = 2.$ On the other hand, since for each $n,$ $x_n \perp_B y_n,$ it follows that $\|x_n -y_n\| \geq 1.$ Therefore, $\|x_n-y_n\| \not\to 0.$ From \cite[Prop. 5.2.8]{M}, this contradicts the fact that $\mathbb{X}$ is uniformly convex. 
	\end{proof}

The converse of Theorem \ref{uc} is not true, in general.  There are spaces for which $ \Gamma (\mathbb X) < \frac{1}{2}$  but the spaces are not
uniformly convex (see Theorem \ref{th}).

\section{Symmetric properties of $\rho$-orthogonal elements:}

Following the notion of left and right symmetric points with respect to Birkhoff-James orthogonality, introduced and studied in \cite{sain1}, we now define $\rho$-left and $\rho$-right symmetric points. Given any $x \in \mathbb{X},$ we say $x$ is $\rho$-left symmetric ($\rho$-right symmetric)  if $x \perp_\rho y$ implies $y \perp_\rho x$ $(y\perp_\rho x \, \, \mbox{implies}\, \, x\perp_\rho y),$ for all $y \in \mathbb{X}.$ If $x$ is  both $\rho$-left and $\rho$-right symmetric then we say that $x$ is $\rho$-symmetric.  The space $\mathbb{X}$ is said to be $\rho$-symmetric if for any $x, y \in \mathbb{X},$ we have $x \perp_\rho y \implies y \perp_\rho x.$ If $dim(\mathbb{X}) \geq 3$ and Birkoff-James orthogonality is symmetric then the norm on $\mathbb{X}$ is induced by an inner product (see \cite{Day, J2}). However, if $dim (\mathbb{X})=2,$ then there exists spaces where the Birkhoff-James orthogonality is symmetric but the norm is not necessarily induced by an inner product. A two-dimensional Banach space where Birkhoff-James orthogonality is symmetric is known as the Radon plane. In this section we focus on the study of $\rho$-symmetric points and $\rho$-symmetric spaces. We begin with the following theorem.

%In \cite{Day, J2}, the authors showed that if $\mathbb{X}$ having dimension greater or equal to three is symmetric with respect to Birkhoff-James orthogonality if and only if $\mathbb{X}$ is an inner product space, whereas if $dim(\mathbb{X})=2,$ then it is a Radon plane. In the similar spirit, we seek for the spaces which are $\rho$-symmetric. In this connection,  we note the following theorem.

\begin{theorem}\label{two-sym}
	
		Let $\mathbb{X}$ be a two-dimensional Banach space and let $\mathbb{X}$ be $\rho$-symmetric. Then $\mathbb{X}$ is strictly convex.
	%Let $\mathbb{X}$ be a two-dimensional Banach space. If $\mathbb{X}$ is $\rho$-symmetric then $\mathbb{X}$ is symmetric.
\end{theorem}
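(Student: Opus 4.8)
The plan is to argue by contradiction: assume $\mathbb{X}$ is $\rho$-symmetric but \emph{not} strictly convex. Since $\mathbb{X}$ is two-dimensional and not strictly convex, its unit sphere contains a non-degenerate line segment; choose a maximal such segment $[u,v]$ with $u,v \in S_\mathbb{X}$, $u \neq v$. Because this segment lies on $S_\mathbb{X}$, the origin does not lie on the line through $u$ and $v$, so $u$ and $v$ are linearly independent, and any two distinct points of the relative interior of $[u,v]$ are linearly independent as well. Set $w = v-u \in \mathbb{X}\setminus\{0\}$, the direction of the segment.

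First I would record the geometry at interior points. Any $x$ in the relative interior of $[u,v]$ is a smooth point of $S_\mathbb{X}$, since the line containing $[u,v]$ is the only supporting line at $x$; let $f$ be its unique supporting functional. A short convexity argument shows $f(u)=f(v)=1$, hence $f(w)=0$, and this same $f$ supports every interior point. Consequently $\rho'_+(x,w)=\rho'_-(x,w)=f(w)=0$, that is $x \perp_\rho w$ for every such $x$. Invoking the hypothesis that $\mathbb{X}$ is $\rho$-symmetric, we obtain $w \perp_\rho x$ for every $x$ in the relative interior of $[u,v]$.

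Next I would analyse $\rho'(w,x)$ through the extreme supporting functionals of $w$ via Lemma \ref{functional}. Since $\mathbb{X}$ is two-dimensional, either $w$ is smooth, with $Ext(J(w)) = \{g\}$, or $w$ is a corner, with $Ext(J(w)) = \{g_1,g_2\}$. In the first case $\rho'(w,x)=g(x)$, so $w \perp_\rho x$ forces $g(x)=0$; in the second case Lemma \ref{functional} gives $\rho'_+(w,x)=\max\{g_1(x),g_2(x)\}$ and $\rho'_-(w,x)=\min\{g_1(x),g_2(x)\}$, whence $\rho'(w,x)=\tfrac12\big(g_1(x)+g_2(x)\big)$ and $w \perp_\rho x$ forces $(g_1+g_2)(x)=0$.

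The contradiction now comes from applying this to two distinct interior points $x_1,x_2$, which are linearly independent. In the smooth case $g$ vanishes on a basis of $\mathbb{X}$, forcing $g=0$, impossible for $g \in S_{\mathbb{X}^*}$. In the corner case $g_1+g_2$ vanishes on a basis, so $g_1=-g_2$; but $g_1,g_2 \in J(w)$ give $g_1(w)=g_2(w)=1$, yielding $1 = g_1(w) = -g_2(w) = -1$, again impossible. Either way we reach a contradiction, so $\mathbb{X}$ must be strictly convex. The main obstacle is the correct bookkeeping at the interior points --- establishing that they are smooth with a common supporting functional annihilating $w$, and then exploiting $\rho$-symmetry against \emph{all} interior directions simultaneously, so that the single functional $g$ (respectively the sum $g_1+g_2$) is forced to vanish on two linearly independent vectors.
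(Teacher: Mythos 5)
Your proposal is correct and follows essentially the same route as the paper's proof: interior points of a segment on $S_\mathbb{X}$ are smooth with a common supporting functional annihilating the segment's direction, hence $\rho$-orthogonal to it; $\rho$-symmetry then forces that direction to be $\rho$-orthogonal to all interior points, and Lemma \ref{functional} reduces this to the vanishing of $g$ (resp.\ $g_1+g_2$) on two linearly independent vectors, which is impossible since these functionals take the value $\|w\|\neq 0$ at $w$. The only differences are cosmetic --- you work with the unnormalized direction $w=v-u$ (harmless by homogeneity, though then $g_i(w)=\|w\|$ rather than $1$) and you spell out the final linear-independence contradiction that the paper leaves implicit.
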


\begin{proof}
	Suppose on the contrary  $\mathbb{X}$ is not strictly convex. Then there exist $u, v\in S_\mathbb{X}$ such that the closed line segment $L[u, v]:=\{(1-t)u+tv: 0 \leq t \leq 1\}$ is a subset of the unit sphere of $\mathbb{X}.$ There exists unique $f \in S_{\mathbb{X}^*}$ such that $f(x) =1,$ for all $x \in L[u, v].$ In other words, $f$ supports the line $L[u, v].$ Consider that $\ker f \cap S_\mathbb{X} = \{\pm y\}.$ Then for any $x \in L[u, v],$  $x \perp_B y.$ We take $x  \in L(u, v),$ where $L(u, v):=\{(1-t)u+tv: 0 < t<1\}.$  Since $x $ is a smooth point, it follows that $x \perp_\rho y.$ Since $\mathbb{X}$ is $\rho$-symmetric, it follows that $y \perp_\rho x.$ Let $Ext(J(y)) = \{g, h\}.$ Then one can observe using Lemma \ref{functional} that $y \perp_\rho w$  if and only if $w \in \ker (g+h).$  Therefore, $L(u, v) \subset \ker (g+h).$ This is a contradiction. Thus $\mathbb{X}$ is strictly convex.
\end{proof}

Using the above theorem we observe the following result.

\begin{theorem}\label{sym}
	Let $\mathbb{X}$ be a normed linear space. 
	\begin{itemize}
	\item[(i)] Suppose that $dim(\mathbb{X})=2.$ If $\mathbb{X}$ is $\rho$-symmetric then $\mathbb{X}$ is a Radon plane.
	 \item[(ii)] Suppose $dim(\mathbb{X}) \geq 3.$ Then $\mathbb{X}$ is $\rho$-symmetric if and only if $\mathbb{X}$ is an inner product space.
	 \end{itemize}
\end{theorem}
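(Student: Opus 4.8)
The plan is to deduce both parts from the two-dimensional statement (i), exploiting the fact that $\perp_\rho$ and $\perp_B$ are \emph{planar} relations: for $x,y\in\mathbb{X}$ the quantities $\rho'_{\pm}(x,y)$ and the defining inequality $\|x+\lambda y\|\ge\|x\|$ involve only vectors of $\mathrm{span}\{x,y\}$, so $x\perp_\rho y$ (respectively $x\perp_B y$) holds in $\mathbb{X}$ if and only if it holds in the two-dimensional subspace $\mathrm{span}\{x,y\}$ carrying the inherited norm. Granting (i), I would argue (ii) as follows. If $\mathbb{X}$ is $\rho$-symmetric then, by planarity, so is every two-dimensional subspace $\mathbb{Y}$; part (i) makes each such $\mathbb{Y}$ a Radon plane, i.e.\ $\perp_B$ is symmetric on $\mathbb{Y}$, and planarity of $\perp_B$ upgrades this to symmetry of $\perp_B$ on all of $\mathbb{X}$. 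Since $\dim\mathbb{X}\ge 3$, the Day--James theorem \cite{Day, J2} forces the norm to come from an inner product. The converse is immediate: an inner product space is smooth, so by Theorem \ref{smooth} $\perp_\rho$ coincides with $\perp_B$, which is symmetric there.

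It remains to prove (i), which is the heart of the matter. By Theorem \ref{two-sym} a $\rho$-symmetric two-dimensional space is strictly convex; through the strict form of the Monotonicity Lemma \ref{lem; mon} this makes $w\mapsto\rho'(x,w)$ \emph{strictly} decreasing as $w$ runs along $S_\mathbb{X}$ from $x$ to $-x$ (cf.\ Proposition \ref{decreasing}), so each $x\in S_\mathbb{X}$ has a \emph{unique} $\rho$-orthogonal direction. I would encode this by a map $\Phi$ on the space of directions $S_\mathbb{X}/\{\pm 1\}$, sending $x$ to the unique direction $\Phi(x)$ with $x\perp_\rho\Phi(x)$. The hypothesis of $\rho$-symmetry says precisely that $\Phi$ is an involution, $\Phi\circ\Phi=\mathrm{id}$; in particular $\Phi$ is a bijection of the circle of directions, hence surjective.

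The decisive and, I expect, most delicate step is to show that strict convexity together with involutivity of $\Phi$ forces $\mathbb{X}$ to be \emph{smooth}. I would argue by contradiction: if $x_0\in S_\mathbb{X}$ were a nonsmooth point with $Ext(J(x_0))=\{f,g\}$ and $f\ne g$, then Lemma \ref{functional} gives $\rho'(x_0,w)=\tfrac12(f+g)(w)$, so $\Phi(x_0)=\ker(f+g)$, a direction lying strictly between $\ker f$ and $\ker g$. On the other hand, approaching $x_0$ through smooth points the supporting functional tends to $f$ from one side and to $g$ from the other, so the one-sided limits of $\Phi$ at $x_0$ are the distinct directions $\ker f$ and $\ker g$. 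Thus $\Phi$, which is monotone along the circle of directions (again by the monotonicity of $\rho'$), has a genuine jump at $x_0$ whose value $\ker(f+g)$ sits strictly inside the jump gap. A monotone map with such a jump omits an entire arc of directions from its range, so it cannot be surjective, contradicting that $\Phi$ is an involution. Hence $\mathbb{X}$ has no nonsmooth points.

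Finally, once $\mathbb{X}$ is smooth, Theorem \ref{smooth} yields $\perp_\rho=\perp_B$, and the assumed $\rho$-symmetry reads exactly as symmetry of $\perp_B$; that is, $\mathbb{X}$ is a Radon plane, establishing (i). The technical points I would have to pin down carefully are the global monotonicity of $\Phi$ and the precise identification of its one-sided limits at a corner, both of which I would extract from the Monotonicity Lemma \ref{lem; mon} together with the upper/lower semicontinuity of $\rho'_{+}$ and $\rho'_{-}$; everything else is bookkeeping.
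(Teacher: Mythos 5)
Your part (ii) coincides with the paper's proof: both use that $\rho'_{\pm}(x,y)$ only involves $\mathrm{span}\{x,y\}$, apply part (i) to every two-dimensional subspace, conclude that Birkhoff--James orthogonality is symmetric on all of $\mathbb{X}$, and invoke Day's theorem, with the same trivial converse via smoothness and Theorem \ref{smooth}. Part (i) is where you genuinely depart from the paper. After obtaining strict convexity from Theorem \ref{two-sym} exactly as you do, the paper finishes with a short direct argument: if $x \perp_B y$ but $y \not\perp_B x$, set $\alpha = -\rho'(y,x) \neq 0$, note $y \perp_\rho (\alpha y + x)$, use $\rho$-symmetry to get $z = (\alpha y + x)/\|\alpha y + x\| \perp_B y$, and then strict convexity makes the functionals $f \in J(z)$ and $g \in J(x)$ annihilating $y$ linearly independent, forcing $y=0$, a contradiction. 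Your route instead constructs the orthogonality involution $\Phi$ and proves that the space must be \emph{smooth}, then concludes via Theorem \ref{smooth}. This is a legitimate alternative that actually yields a stronger structural conclusion (a two-dimensional $\rho$-symmetric space is smooth as well as strictly convex), but it is technically heavier, and the two steps you flag do require proof; they are fillable: global monotonicity of $\Phi$ follows because in a strictly convex plane the Birkhoff cones of distinct directions are pairwise disjoint (left-uniqueness of $\perp_B$) and rotate monotonically, while $\Phi(x)$ lies in the cone of $x$; the one-sided limits at a corner follow from upper semicontinuity plus cyclic monotonicity of the duality map. Two small corrections to your setup: uniqueness of the $\rho$-orthogonal direction needs neither strict convexity nor strict monotonicity, since by Lemma \ref{functional} in two dimensions $\rho'(x,\cdot)=\tfrac{1}{2}(f+g)(\cdot)$ is a nonzero linear functional (a fact you yourself use at the corner point), so its kernel is automatically a single line; and the strict form of Lemma \ref{lem; mon} does not by itself give strict decrease of $\rho'(x,\cdot)$, because strict inequalities of norms can degenerate to equalities in the limits defining $\rho'_{\pm}$ --- strictness instead follows from Proposition \ref{decreasing} together with the observation that constancy of $\rho'(x,\cdot)$ on a sub-arc would force a line segment in $S_\mathbb{X}$, contradicting strict convexity.
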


\begin{proof}
	(i) We prove that if $\mathbb{X}$ is $\rho$-symmetric then $\mathbb{X}$ is symmetric with respect to Birkhoff-James orthogonality. Suppose on the contrary that there exist $x, y \in S_\mathbb{X}$ such that $x \perp_B y$ but $y \not\perp_B x.$ Then clearly, $y \not\perp_\rho x.$  Let us consider a nonzero real number $\alpha = -\rho'(y, x).$ It is easy to see that $\rho'(y, \alpha y +x) =0.$ Take $z = \frac{\alpha y+x}{\|\alpha y+x\|} \in S_\mathbb{X}.$ Then $y \perp_\rho z.$ Since $\mathbb{X}$ is $\rho$-symmetric then $z \perp_\rho y.$ This implies that $z\perp_B y. $ Therefore, there exists $f \in J(z)$ such that $y \in \ker f.$ Also, $x \perp_B y$ implies that there exists $g \in J(x)$ such that $y \in \ker g. $ Therefore, $y \in \ker f \cap \ker g.$ From Theorem \ref{two-sym} we note that  $\mathbb{X}$ is strictly convex.  Therefore, $J(z) \cap J(x) = \emptyset.$ This shows that $f $ and $g$ are linearly independent. Thus  we obtain that $y  = 0,$ which is a contradiction. This implies that $\mathbb{X}$ is symmetric with respect to Birkhoff-James orthogonality and therefore it must be a Radon plane.\\
	
	(ii) The sufficient part follows trivially. We prove the necessary part. Since $\mathbb{X}$ is $\rho$-symmetric, it follows that every two-dimensional subspace of $\mathbb{X}$ is $\rho$-symmetric. Then applying  Theorem \ref{two-sym}, every two-dimensional subspace of $\mathbb{X}$ is symmetric with respect to Birkhoff-James orthogonality. This implies that $\mathbb{X}$ is symmetric with respect to Birkhoff-James orthogonality. Hence from \cite[Th. 6.4]{Day} it follows that $\mathbb{X}$ is an inner product space.
\end{proof}

In the next example we see that the converse of Theorem \ref{sym}(i) is not true.

\begin{example}
	Let us consider the two-dimensional Radon plane $(\mathbb{R}^2, \|\cdot\|_{\ell_1-\ell_\infty}).$ Observe that all the points on the unit sphere are symmetric with respect to Birkhoff-James orthogonality but  there are many points which are not symmetric with respect to $\rho$-orthogonality. Note that $(1, 0) \in \mathbb{R}^2$ is not a  $\rho$-symmetric point. Indeed, take $(-\frac{1}{3}, 1)\in\mathbb{R}^2.$ Then it is clear to see that $(-\frac{1}{3}, 1) \perp_\rho (1, 0)$ whereas, $(1, 0) \not \perp_\rho (-\frac{1}{3}, 1).$ So, $\rho$-orthogonality is not symmetric.
\end{example}

 While investigating Birkhoff-James orthogonality, the notions of $x^+$ and $x^-$ were elegantly introduced by Sain \cite{sain}. Motivated by these here we introduce the notions of $x^{\rho+}$ and $x^{\rho-}$ as follows:

\begin{definition}
	Let $\mathbb{X}$ be a normed linear space and let $x, y \in \mathbb{X}.$ We say $y \in x^{\rho+}$ if $\rho'(x, y) \geq 0$ and $y \in x^{\rho-}$ if $\rho'(x, y) \leq 0.$
\end{definition}

We state the following proposition for which the proofs are trivial.

\begin{prop}
	Let $\mathbb{X}$ be a normed linear space and let $x, y \in \mathbb{X}.$ Then the following relations hold true:
	\begin{itemize}
	\item [(i)] Either $y \in x^{\rho+} $ or $y \in x^{\rho-}$.
\item[(ii)] $x \perp_\rho y$ if and only if $y \in x^{\rho+}$ and $y \in x^{\rho-}$.
	\item[(iii)] $y \in x^{\rho+}$ implies that $\alpha y \in (\beta x)^{\rho+}$ for all $\alpha, \beta > 0.$
\item	[(iv)] $y \in x^{\rho+} $ implies that $-y \in x^{\rho-}$ and $y \in (-x)^{\rho-}.$
	\item[(v)] $y \in x^{\rho-}$ implies that $\alpha y \in (\beta x)^{\rho-}$ for all $\alpha, \beta > 0.$
	\item[(vi)] $y \in x^{\rho-}$ implies that $-y \in x^{\rho+}$ and $y \in (-x)^{\rho+}.   $
	\end{itemize}
\end{prop}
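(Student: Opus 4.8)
The plan is to reduce all six assertions to two elementary computational facts about how the norm derivatives $\rho'_\pm$ transform under positive scaling of either argument and under negation; everything else is trichotomy of the reals. First I would record the scaling identity. Fix $\alpha, \beta > 0$. Starting from the definition $\rho'_+(\beta x, \alpha y) = \|\beta x\| \lim_{t\to 0^+} \frac{\|\beta x + t\alpha y\| - \|\beta x\|}{t}$, I would factor $\beta$ out of the norm and perform the substitution $s = (\alpha/\beta) t$, which sends $t \to 0^+$ to $s \to 0^+$ precisely because $\alpha, \beta > 0$. This yields $\rho'_+(\beta x, \alpha y) = \alpha\beta\, \rho'_+(x, y)$, and the identical substitution in the left-hand limit gives $\rho'_-(\beta x, \alpha y) = \alpha\beta\, \rho'_-(x, y)$. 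Averaging, $\rho'(\beta x, \alpha y) = \alpha\beta\, \rho'(x, y)$. Since $\alpha\beta > 0$, the sign of $\rho'$ is unchanged, which is exactly (iii) and (v).

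Next I would record the negation identities. Replacing $y$ by $-y$ and substituting $s = -t$ interchanges the one-sided limits: it turns the right-hand difference quotient at $-y$ into the negative of the left-hand difference quotient at $y$. Thus $\rho'_+(x, -y) = -\rho'_-(x, y)$ and $\rho'_-(x, -y) = -\rho'_+(x, y)$, so that $\rho'(x, -y) = -\rho'(x, y)$. The same bookkeeping, now using $\|-x + ty\| = \|x - ty\|$ and $\|-x\| = \|x\|$, gives $\rho'(-x, y) = -\rho'(x, y)$. With these in hand, (iv) follows at once: if $y \in x^{\rho+}$ then $\rho'(x, y) \ge 0$, hence $\rho'(x, -y) = -\rho'(x, y) \le 0$ (so $-y \in x^{\rho-}$) and $\rho'(-x, y) = -\rho'(x, y) \le 0$ (so $y \in (-x)^{\rho-}$); item (vi) is the mirror image, obtained by reversing every inequality.

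Finally, (i) and (ii) require no computation: for fixed $x, y$ the quantity $\rho'(x, y)$ is a single real number, so $\rho'(x, y) \ge 0$ or $\rho'(x, y) \le 0$ (possibly both), which is (i); and $\rho'(x, y) = 0$ holds if and only if $\rho'(x, y) \ge 0$ and $\rho'(x, y) \le 0$ simultaneously, which by definition is $y \in x^{\rho+}$ together with $y \in x^{\rho-}$, giving (ii). The only place demanding genuine care — and the reason I would write the two substitutions out explicitly rather than simply declare the proposition \emph{trivial} — is the sign-and-side bookkeeping in the negation step, where negating an argument swaps the roles of $\rho'_+$ and $\rho'_-$; a careless argument could wrongly conclude $\rho'(x, -y) = \rho'(x, y)$ and collapse (iv) and (vi). Once the two transformation identities are established cleanly, all six statements drop out immediately.
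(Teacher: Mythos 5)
Your proof is correct, and it is exactly the routine verification the paper has in mind: the paper states this proposition with the remark that ``the proofs are trivial'' and omits all details, which are precisely the scaling identity $\rho'(\beta x,\alpha y)=\alpha\beta\,\rho'(x,y)$ and the negation identities $\rho'(x,-y)=\rho'(-x,y)=-\rho'(x,y)$ that you establish. Your care with the sign-and-side swap ($\rho'_+(x,-y)=-\rho'_-(x,y)$, not $-\rho'_+(x,y)$) is exactly the right point to make explicit, and the rest follows as you say from trichotomy of the reals.
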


With the help of the above notions, we obtain the characterization of $\rho$-left symmetric points. 
\begin{theorem}\label{left sym}
	Let $\mathbb{X}$ be any normed linear space and let $x \in S_\mathbb{X}.$ Then $x$ is $\rho$-left symmetric if and only if for any $y \in S_\mathbb{X},$ the following conditions hold true:
	\begin{itemize}
		\item[(i)] $y \in x^{\rho+}$ implies $x \in y^{\rho+}$
		\item[(ii)] $y \in x^{\rho-}$ implies $x \in y^{\rho-}.$
	\end{itemize}
\end{theorem}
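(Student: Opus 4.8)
The plan is to prove both implications of the stated equivalence, with the reverse (sufficiency) direction being routine and the forward (necessity) direction carrying the real work.

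For sufficiency, assume (i) and (ii) hold for every $y \in S_\mathbb{X}$. To show $x$ is $\rho$-left symmetric I must verify $x \perp_\rho y \implies y \perp_\rho x$ for all $y \in \mathbb{X}$; by homogeneity of $\rho$-orthogonality it suffices to treat $y \in S_\mathbb{X}$. If $x \perp_\rho y$, then $\rho'(x,y)=0$, so simultaneously $y \in x^{\rho+}$ and $y \in x^{\rho-}$. Applying (i) gives $x \in y^{\rho+}$, i.e. $\rho'(y,x) \geq 0$, and applying (ii) gives $x \in y^{\rho-}$, i.e. $\rho'(y,x) \leq 0$; hence $\rho'(y,x)=0$, that is $y \perp_\rho x$.

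For necessity, assume $x$ is $\rho$-left symmetric and fix $y \in S_\mathbb{X}$ with $y \in x^{\rho+}$, so $\rho'(x,y) \geq 0$. The first key step is the translation identity $\rho'_\pm(x, \alpha x + y) = \alpha\|x\|^2 + \rho'_\pm(x,y)$, valid for all real $\alpha$ and hence also for $\rho'$; taking $\alpha = -\rho'(x,y) \leq 0$ and forming $w = \alpha x + y$ yields $\rho'(x, w) = 0$, i.e. $x \perp_\rho w$. (The degenerate case $y = x$ gives $x \in y^{\rho+}$ trivially, so I may assume $w \neq 0$.) Left symmetry of $x$ now delivers $w \perp_\rho x$, that is $\rho'(w,x)=0$, and since $y = w + |\alpha| x$ it remains to transport this vanishing into the inequality $\rho'(y,x) \geq 0$. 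The heart of the argument is a convexity observation along the line through $w$ in the direction $x$: setting $\Phi(s) = \|w + sx\|^2$, the function $\Phi$ is convex (being the square of a non-negative convex function), and a direct computation gives $\rho'_\pm(w + sx, x) = \tfrac{1}{2}\Phi'_\pm(s)$, so that $\rho'(w + sx, x) = \tfrac{1}{4}\bigl(\Phi'_+(s) + \Phi'_-(s)\bigr)$. From $\rho'(w,x)=0$ I obtain $\Phi'_-(0) + \Phi'_+(0) = 0$, and convexity forces $\Phi'_-(0) \leq 0 \leq \Phi'_+(0)$. Since $|\alpha| \geq 0$, monotonicity of the one-sided derivatives of a convex function gives $\Phi'_-(|\alpha|) \geq \Phi'_+(0) \geq 0$ and $\Phi'_+(|\alpha|) \geq \Phi'_-(|\alpha|)$, whence $\rho'(y,x) = \tfrac{1}{4}\bigl(\Phi'_+(|\alpha|) + \Phi'_-(|\alpha|)\bigr) \geq 0$, i.e. $x \in y^{\rho+}$, proving (i). For (ii) I apply (i) to $-y \in S_\mathbb{X}$: the relation $y \in x^{\rho-} \iff -y \in x^{\rho+}$ combined with the oddness $\rho'(-y,x) = -\rho'(y,x)$ converts $x \in (-y)^{\rho+}$ into $x \in y^{\rho-}$.

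I expect the main obstacle to be the convexity core of necessity, namely correctly recognizing that $s \mapsto \|w+sx\|^2$ is convex and that $\rho'_\pm(w+sx,x)$ are precisely half its one-sided derivatives, and then reading off the sign of $\rho'(y,x)$ from the monotonicity of those derivatives. This is exactly where the passage from the \emph{equality} $\rho'(w,x)=0$ supplied by left symmetry to the desired \emph{inequality} $\rho'(y,x) \geq 0$ is genuinely earned; the translation identity and the oddness relation are comparatively mechanical once this step is in place.
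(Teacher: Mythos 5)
Your proof is correct, and it shares the paper's overall skeleton: the same short sufficiency argument, and for necessity the same auxiliary vector $w = y - \rho'(x,y)x$ (the paper calls it $z$), to which left symmetry is applied to obtain $\rho'(w,x)=0$. The genuine difference lies in the decisive step, passing from this \emph{equality} at $w$ to the \emph{inequality} $\rho'(y,x)\geq 0$. The paper argues geometrically in the plane $span\{x,y\}$: it normalizes $z'=z/\|z\|$, notes that the ray $[0,y\rangle$ lies between $[0,x\rangle$ and $[0,z'\rangle$, and applies the Monotonicity Lemma (Lemma \ref{lem; mon}) for $t>0$ and $t<0$ to get $\rho'_{\pm}(y,x)\geq \rho'_{\pm}(z',x)$. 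You instead run a one-dimensional convex-analysis argument along the line $s\mapsto w+sx$: the function $\Phi(s)=\|w+sx\|^2$ is convex, a direct computation gives $\rho'_{\pm}(w+sx,x)=\tfrac{1}{2}\Phi'_{\pm}(s)$, so $\rho'(w,x)=0$ pins $\Phi'_-(0)\leq 0\leq \Phi'_+(0)$, and monotonicity of the one-sided derivatives of a convex function yields $\rho'(y,x)=\tfrac{1}{4}\bigl(\Phi'_+(|\alpha|)+\Phi'_-(|\alpha|)\bigr)\geq 0$. Your route is more self-contained --- no planar orientation, no ``between rays'' bookkeeping, no normalization of $w$ --- and your derivation of (ii) from (i) via $y\mapsto -y$ together with $\rho'(-y,x)=-\rho'(y,x)$ is cleaner than the paper's ``condition (ii) can be proved similarly''; what the paper's route buys is the reuse of a lemma already in its toolkit and a visibly geometric picture consistent with its other proofs. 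Two small points to tighten: the translation identity $\rho'_{\pm}(x,\alpha x+y)=\alpha\|x\|^2+\rho'_{\pm}(x,y)$ you invoke is indeed a standard norm-derivative property (it is exactly what the paper uses tacitly when it asserts $\rho'(x,z)=0$); and when $\alpha=0$ your chain $\Phi'_-(|\alpha|)\geq \Phi'_+(0)$ needs $|\alpha|>0$, but in that degenerate case $y=w$ and $\rho'(y,x)=0$ follows at once from left symmetry, which is how the paper disposes of the case $\rho'(x,y)=0$ separately.
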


\begin{proof}
	Note that the sufficient part is easy. Indeed, let $x \perp_\rho y.$ This implies that $y \in x^{\rho+} \cap x^{\rho-}.$ From the hypothesis we have $x \in y^{\rho+} \cap y^{\rho-}.$ Thus $y \perp_\rho x.$ 
	
	To prove the necessary part we only show condition (i) as condition (ii) can be proved similarly. For this let $y \in x^{\rho+}.$ This implies $\rho'(x, y)\geq 0.$ If $\rho'(x, y) =0$ then we have $\rho'(y, x)=0,$ since $x$ is $\rho$-left symmetric. Thus in this case $x \in y^{\rho+}.$ Next let us assume that  $\rho'(x, y) > 0.$ If $x=y$ then we are done. Also, note that $x\neq -y.$ Thus we assume $x\neq \pm y.$  Let $V = span\{x, y\}$ and let $z = y - \rho'(x, y)x \in V.$ It is easy to observe that $\rho'(x, z) =0,$ i.e., $x \perp_\rho z.$ Since $x$ is $\rho$-left symmetric, it follows that $z \perp_\rho x,$ i.e., $\rho'(z, x) =0.$ Since $y = z + \rho'(x, y)x$ and $\rho'(x, y)> 0,$ it follows that  the ray $[0, y\rangle$ lies in between the rays $[0, x\rangle$ and $[0, z\rangle.$ Let $z' = \frac{z}{\|z\|}.$ Then  the ray $[0, y\rangle$ lies in between the rays $[0, x\rangle$ and $[0, z'\rangle.$ Now applying Lemma \ref{lem; mon} we obtain that for each $t > 0,$
	\begin{eqnarray*}
		\frac{\|z'+tx\|-1}{t} \leq \frac{\|y+tx\| -1}{t}.
	\end{eqnarray*} 
	Taking $t \rightarrow 0^+$  we get $\rho'_+(y, x) \geq \rho'_+(z', x).$ By using similar arguments we can show that $\rho'_-(y, x) \geq \rho'_-(z', x).$ Therefore, we have $\rho'(y, x) \geq \rho'(z', x).$ Since $\rho'(z', x)=0,$ it follows that  $\rho'(y, x) \geq 0.$ Therefore, (i) holds true. Hence the theorem.

%	Thus from \cite[Th. 3.1]{S} we have for any $f \in J(z)$ there exist $g \in J(z)$ such that $f(x) + g(x) =0.$ Therefore, note that $\frac{1}{2}f + \frac{1}{2}g(z) = \frac{1}{2}f + \frac{1}{2}g(y) = \|z \| = \|y - \rho'(x, y)x\|.$ Since $\rho'(y, x) < 0$ this implies $x \in y^{\rho-}$ (see Proposition \ref{rho; BJ}). Therefore, $\frac{1}{2}f + \frac{1}{2}g(y) = \|y - \rho'(x, y) x\| \geq \|y\| =1.$ Thus we have $\frac{1}{2}f +\frac{1}{2}g \in J(y).$ Since $J(y)$ is a face of the unit ball of the dual, it follows that $J(z) \subset J(y).$ Considering the two-dimensional subspace $V$ we have the following two cases:
%	\begin{itemize}
%		\item[(i)] $J(z) = J(y),$
%	    \item[(ii)] $z$ is smooth in $V$ and $y$ is extreme point in $V.$
%	\end{itemize}
	
%	For Case (i), it is easy to see that $z=y.$ Then we get $z \perp_\rho x,$ which is a contradiction. Suppose that Case (ii) holds true. 
	
\end{proof}

	We already observed that the characterization of a $\rho$-left symmetric point holds analogously as given in \cite[Th. 2.1]{SRBB}. It is now natural to presume that an analogous version of \cite[Th. 2.2]{SRBB} also holds true in case of $\rho$-right symmetric points. But in the following example we see that some of the $\rho$-right symmetric points behave otherwise.

\begin{example}
  Let us consider the space $\ell_\infty^3$. Suppose that $x = (1, 1, \frac{1}{2}) \in S_{\ell_\infty^3}.$ It is easy to observe that $x$ is a $\rho$-right symmetric point. Let $y=(-\frac{1}{2}, 0, 1) \in \ell_\infty^3.$ Note that $Ext(J(x))= \{f_1, f_2\}$ and $Ext(J(y))=\{f_3\},$ where for each $i \in \{1, 2, 3\},$  $f_i(x)= x_i,$ for all $x=(x_1, x_2, x_3) \in \ell_\infty^3.$ Now applying Lemma \ref{functional} it is easy to obtain that $\rho'(y, x)=\frac{1}{2}>0$ and $\rho'(x, y)= -\frac{1}{4}<0.$ This shows that  $x \in y^{\rho+}$ but $y \in x^{\rho-}.$ 
\end{example}

\begin{remark}
 Given any $x, y \in \mathbb{X},$ we say that $\perp_\rho$ has $\alpha$-left ($\alpha$-right) existence if there exists an $\alpha \in \mathbb{R}$ such that $\alpha x + y \perp_\rho x$ ($x \perp_{\rho} \alpha x+ y$).	Unlike Birkhoff-James orthogonality, $\rho$-orthogonality does not always have the $\alpha$-left existence. From the above example we can observe by a straightforward computation that there does not exist any $\alpha \in \mathbb{R}$ such that $\alpha x+y \perp_\rho x.$  In other words, $\perp_\rho$ does not satisfy the $\alpha$-left existence at $x$.  On the other hand, $\rho$-orthogonality always satisfies the $\alpha$-right existence. 
\end{remark}

Our next aim is to obtain a characterization of $\rho$-right symmetric points for which the $\alpha$-left existence is guaranteed. 

\begin{theorem}
	Suppose that $\mathbb{X}$ is a normed linear space and $x \in \mathbb{X}$  satisfies the $\alpha$-left existence property. Then $x$ is $\rho$-right symmetric if and only if for any $y \in S_\mathbb{X},$ the following conditions hold true:
	\begin{itemize}
			\item[(i)]  $x \in y^{\rho+}$ implies $y \in x^{\rho+}$
		\item[(ii)] $x \in y^{\rho-}$ implies $y \in x^{\rho-}.$
	\end{itemize}
	
\end{theorem}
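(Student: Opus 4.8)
The plan is to prove the two implications separately, the key point being that the entire force of the $\alpha$-left existence hypothesis is needed only for the necessity; the sufficiency goes through verbatim as in Theorem \ref{left sym} without any extra assumption. For sufficiency I would assume (i) and (ii), take $y$ with $y \perp_\rho x$, and (by homogeneity) normalise $y \in S_\mathbb{X}$. Then $\rho'(y,x)=0$ means simultaneously $x \in y^{\rho+}$ and $x \in y^{\rho-}$, so (i) and (ii) give $\rho'(x,y)\ge 0$ and $\rho'(x,y)\le 0$, i.e. $x \perp_\rho y$, proving $\rho$-right symmetry. For necessity I would prove (i) and then deduce (ii) by applying (i) to $-y$, using $\rho'(y,-x)=-\rho'(y,x)$, $\rho'(x,-y)=-\rho'(x,y)$, and the observation that if $\alpha x+y \perp_\rho x$ then $(-\alpha)x+(-y)\perp_\rho x$, so the $\alpha$-left existence transfers from $y$ to $-y$.

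To prove (i), I would assume $x \in y^{\rho+}$, i.e. $\rho'(y,x)\ge 0$, with $y \in S_\mathbb{X}$ and (after scaling) $\|x\|=1$, the goal being $\rho'(x,y)\ge 0$. The case $\rho'(y,x)=0$ is immediate: then $y \perp_\rho x$, and $\rho$-right symmetry gives $x \perp_\rho y$. So assume $\rho'(y,x)>0$; this rules out $y=-x$, while $y=x$ gives $\rho'(x,y)=\|x\|^2>0$, so I may take $x,y$ linearly independent and work throughout in the two-dimensional space $V=\mathrm{span}\{x,y\}$, where all the relevant norm derivatives are intrinsic. Now I would invoke the hypothesis to obtain $\alpha \in \mathbb{R}$ with $w:=\alpha x+y \perp_\rho x$. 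Since $x$ is $\rho$-right symmetric, this yields $x \perp_\rho w$, i.e. $\rho'(x,\alpha x+y)=0$. Using the standard identity $\rho'(x,\alpha x+y)=\alpha\|x\|^{2}+\rho'(x,y)$ (see \cite{AST}; in $V$ it also follows from Lemma \ref{functional}, since there $\rho'(x,\cdot)=\tfrac12(p+q)$ with $Ext(J(x))=\{p,q\}$ is linear), I conclude $\rho'(x,y)=-\alpha$. Hence everything reduces to showing $\alpha\le 0$.

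The hard part is precisely this sign determination, and it is exactly here that the proof departs from the $\rho$-left symmetric case: there the auxiliary vector $y-\rho'(x,y)x$ always exists, whereas here the existence of $w$ is guaranteed only by the $\alpha$-left existence property. I would argue by contradiction. If $\alpha>0$, then $w=\alpha x+y$ is a positive combination of $x$ and $y$, so the ray $[0,w\rangle$ lies between $[0,x\rangle$ and $[0,y\rangle$; rewriting $y=w+\alpha(-x)$ exhibits $y$ as a positive combination of $w$ and $-x$, so $[0,y\rangle$ lies between $[0,w\rangle$ and $[0,-x\rangle$. Setting $w'=w/\|w\|$ (so $\rho'(w',x)=0$ by homogeneity), I would apply the Monotonicity Lemma \ref{lem; mon} twice with the base point $-tx$: for $t>0$ (base point on $[0,-x\rangle$) the second betweenness gives $\|y+tx\|\le\|w'+tx\|$, and for $t<0$ (base point on $[0,x\rangle$) the first betweenness gives $\|w'+tx\|\le\|y+tx\|$. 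Passing to the limits $t\to 0^{\pm}$ in each case produces $\rho'_{\pm}(y,x)\le\rho'_{\pm}(w',x)$, whence $\rho'(y,x)\le\rho'(w',x)=0$, contradicting $\rho'(y,x)>0$. Therefore $\alpha\le 0$, giving $\rho'(x,y)=-\alpha\ge 0$, i.e. $y\in x^{\rho+}$; this establishes (i), and (ii) follows by the symmetry reduction described above.
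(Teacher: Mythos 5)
Your proposal is correct and takes essentially the same route as the paper's proof: use the $\alpha$-left existence property to produce $w=\alpha x+y\perp_\rho x$, apply $\rho$-right symmetry to get $x\perp_\rho w$ and hence $\rho'(x,y)=-\alpha$ (for $\|x\|=1$), and then exclude $\alpha>0$ via the ray-betweenness configuration and the Monotonicity Lemma \ref{lem; mon}, which forces $\rho'(y,x)\le\rho'(w/\|w\|,x)=0$, a contradiction. The only differences are cosmetic: you deduce condition (ii) from (i) by the substitution $y\mapsto -y$ where the paper merely says ``similarly,'' you make the two applications of the Monotonicity Lemma explicit, and your identity $\rho'(x,y)=-\alpha$ corrects what appears to be a misprint in the paper ($\rho'(x,y)=-\frac{1}{\alpha}$), though only the sign matters in either version.
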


\begin{proof}
	Since the sufficient part is easy to show, we only prove the necessary part.  We prove Condition (i) as Condition (ii) can be proved similarly. Suppose on the contrary that $x\in y^{\rho+}$ but $y \notin x^{\rho+},$ for some $y \in S_\mathbb{X}.$ This implies that $\rho'(y, x)\geq 0$ but $\rho'(x, y)<0.$ If $\rho'(y, x) =0 $ then by $\rho$-right symmetricity of $x$ we get $\rho'(x, y) = 0.$ In that case we have nothing to prove. So, we consider that $\rho'(y, x) > 0.$	If $x=y$ then we are done. Note  that $x \neq - y.$  Thus we assume $x \neq \pm y.$ Let us consider the two-dimensional subspace $\mathbb{Y}= span\{x, y\}.$ Since $x$ has the $\alpha$-left existence property, it follows that there exists a nonzero $\alpha \in \mathbb{R}$ such that $\rho'(\alpha x+y, x) = 0.$  As $x$ is $\rho$-right symmetric, we have $\rho'(x, \alpha x+ y) = 0.$ This implies that $\rho'(x, y) = -\frac{1}{\alpha}.$ Since $\rho'(x, y)<0,$ we get $\alpha > 0.$  Let us assume $\frac{\alpha x+y}{\|\alpha x+y\|}=w.$ Then $y = \|\alpha x+y\|w-\alpha x.$ Since $\alpha > 0,$ it is easy to see that the ray $[0, w\rangle$ lies in between the rays $[0, x\rangle$ and $[0, y\rangle.$ Now applying  Lemma \ref{lem; mon} and proceeding as in Theorem \ref{left sym}, we obtain the following:
	\begin{itemize}
		\item $\rho'_+(w, x) \geq \rho'_+(y, x)$ and
		\item $\rho'_-(w, x) \geq \rho'_-(y, x).$
	\end{itemize}
	Combining these we get $\rho'(y, x) \leq \rho'(w, x) =0.$ This is a contradiction to the fact that $\rho'(y, x)>0$. This completes the necessary part.
\end{proof}

Next, we give an example of $\rho$-right symmetric points which has $\alpha$-left existence.
\begin{example}
	Suppose that $x \in Ext(B_{\ell_1^n}).$ By an easy computation it  can be observed that $x$ is $\rho$-right symmetric points of $\ell_1^n$ (also, see Theorem \ref{right; sum}).  Now one can check that  given any $y =(y_1, y_2, \ldots, y_n)\in S_{\ell_1^n},$ there exists an $\alpha \in \mathbb{R}$ such that $\alpha x+ y \perp_\rho x.$ Indeed, if $x=(1, 0, \ldots, 0) \in Ext(B_{\ell_1^n})$ then  taking $\alpha=-y_1$ we obtain that $\alpha x+y \perp_\rho x.$
\end{example}  

Now we focus on the study of the $\rho$-symmetric points (both left and right) in the classical $\ell_p^n$ spaces. Note that $\ell_p^n$ is a smooth Banach space whenever $1 < p < \infty.$ Therefore, the Birkhoff-James orthogonality coincides with the $\rho$-orthogonality. Thus the characterization of $\rho$-left and $\rho$-right symmetric points in $\ell_p^n$ follow easily from \cite{CSS}. So we only study the $\rho$-left and $\rho$-right symmetric points in $\ell_1^n$ and $\ell_\infty^n.$ To do so we introduce the notations $\mathcal{Z}_x$ and $\mathcal{I}_x$ for $x=(x_1, x_2, \ldots, x_n) \in \mathbb{R}^n,$ where 
 $\mathcal{Z}_x = \{i \in \{1, 2, \ldots, n\}: x_i=0 \}$  and  $\mathcal{I}_x = \{i \in \{1, 2, \ldots, n\}: |x_i|=1\}.$ Clearly, for any extreme point $x \in \ell_1^n,$ $|\mathcal{Z}_x|=n-1$ and $|\mathcal{I}_x| = 1.$ A point $x \in \ell_1^n$ is smooth if and only if $\mathcal{Z}_x =\emptyset.$ For any extreme point $x \in \ell_\infty^n,$ note that $|\mathcal{I}_x|=n.$ Let us first characterize the $\rho$-orthogonal elements in $\ell_1^n$ and $\ell_\infty^n.$
\begin{prop}\label{ortho; characterization}
	Let $\mathbb{X}=\ell_p^n,$ where $p=1, \infty.$ Let  $x=(x_1, x_2, \ldots, x_n)$ and $y=(y_1, y_2, \ldots, y_n)\in S_\mathbb{X}.$ 
	\begin{itemize}
		\item[(i)] If $p=1,$ then $x \perp_\rho y$ if and only if $\sum_{i=1}^n sgn(x_i)y_i = 0.$ 
		\item[(ii)] If $p=\infty,$ then $x \perp_\rho y$ if and only if $\max_{i \in \mathcal{I}_x}\{sgn(x_i)y_i\} + \min_{i \in \mathcal{I}_x}\{sgn(x_i)y_i\}=0.$ 
	\end{itemize}
\end{prop}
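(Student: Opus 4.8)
The plan is to compute $\rho'(x,y)$ explicitly in each space via Lemma \ref{functional}, which expresses $\rho'_+(x,y)$ and $\rho'_-(x,y)$ as the supremum and infimum of $f(y)$ over the extreme supporting functionals $f\in Ext(J(x))$. Since $x,y\in S_\mathbb{X}$ in both parts, the lemma applies directly, so the whole argument reduces to (a) identifying $Ext(J(x))$ in $\ell_1^n$ and $\ell_\infty^n$, and (b) evaluating the resulting extremal problems. Setting $\rho'(x,y)=\frac{1}{2}(\rho'_+(x,y)+\rho'_-(x,y))=0$ will then give the stated conditions.

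For \emph{part (i)}, working in $\ell_1^n$ whose dual is $\ell_\infty^n$, I would first describe $J(x)$. A functional $f=(f_1,\dots,f_n)$ lies in $J(x)$ precisely when $\|f\|_\infty\le 1$ and $\sum_i f_i x_i=\sum_i|x_i|=1$; this equality forces $f_i=sgn(x_i)$ whenever $x_i\neq 0$, while $f_i$ is free in $[-1,1]$ for $i\in\mathcal{Z}_x$. Hence the extreme points are exactly those $f$ with $f_i=sgn(x_i)$ for $x_i\neq 0$ and $f_i\in\{-1,+1\}$ for $i\in\mathcal{Z}_x$. Applying Lemma \ref{functional} and optimising each free coordinate separately ($f_i=sgn(y_i)$ for the supremum, $f_i=-sgn(y_i)$ for the infimum) yields
\begin{align*}
\rho'_+(x,y) &= \sum_{x_i\neq 0} sgn(x_i)\,y_i + \sum_{i\in\mathcal{Z}_x}|y_i|,\\
\rho'_-(x,y) &= \sum_{x_i\neq 0} sgn(x_i)\,y_i - \sum_{i\in\mathcal{Z}_x}|y_i|.
\end{align*}
Averaging, the $\mathcal{Z}_x$-terms cancel and $\rho'(x,y)=\sum_{x_i\neq 0}sgn(x_i)\,y_i=\sum_{i=1}^n sgn(x_i)\,y_i$ (using the convention $sgn(0)=0$), so $x\perp_\rho y$ if and only if this sum vanishes.

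For \emph{part (ii)}, working in $\ell_\infty^n$ whose dual is $\ell_1^n$, the condition $\sum_i f_i x_i=1$ together with $\sum_i|f_i|\le 1$ and $|x_i|\le 1$ forces $f_i=0$ off $\mathcal{I}_x$ and $f_i=sgn(x_i)|f_i|$ on $\mathcal{I}_x$, with $\sum_{i\in\mathcal{I}_x}|f_i|=1$. Thus $J(x)$ is the image of a simplex under $(c_i)_{i\in\mathcal{I}_x}\mapsto\sum_{i\in\mathcal{I}_x}c_i\,sgn(x_i)e_i^*$, whose extreme points are the single-coordinate functionals $f^{(j)}=sgn(x_j)e_j^*$ for $j\in\mathcal{I}_x$. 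Lemma \ref{functional} then gives $\rho'_+(x,y)=\max_{i\in\mathcal{I}_x}sgn(x_i)y_i$ and $\rho'_-(x,y)=\min_{i\in\mathcal{I}_x}sgn(x_i)y_i$; averaging and equating to zero produces the stated criterion.

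The computation is essentially routine once the extreme supporting functionals are in hand, so the only delicate point is the correct identification of $Ext(J(x))$ — in particular the bookkeeping at the free coordinates $\mathcal{Z}_x$ in the $\ell_1$ case, where the $\pm 1$ choices must be optimised and then cancel upon averaging, and the reduction to single-coordinate functionals on $\mathcal{I}_x$ in the $\ell_\infty$ case. I would also flag the convention $sgn(0)=0$, which is precisely what allows the $\ell_1$-condition to be written as a full sum over all $n$ coordinates rather than only over the support of $x$.
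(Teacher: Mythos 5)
Your proposal is correct and follows essentially the same route as the paper: both identify $Ext(J(x))$ in $\ell_1^n$ and $\ell_\infty^n$ and then apply Lemma \ref{functional}. Your version is marginally cleaner in part (i), since computing $\rho'(x,y)=\sum_{i=1}^n sgn(x_i)y_i$ explicitly (with the $\mathcal{Z}_x$-terms cancelling upon averaging) yields both directions of the equivalence at once, where the paper proves the forward implication and notes the converse is similar.
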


\begin{proof}
	(i) Let $x\perp_\rho y.$ Thus it can be easily observed that 
	\begin{eqnarray*}
	Ext(J(x))=\bigg\{u=(u_1, u_2, \ldots, u_n) \in \ell_\infty^n: u_k&=&sgn(x_k),\, \mbox{when}\, k \notin \mathcal{Z}_x\\
	&\mbox{and}&\\
	u_k&\in& \{\pm 1\},\, \mbox{when} \, k\in \mathcal{Z}_x\bigg\}.
	\end{eqnarray*}
	Since $x\perp_\rho y,$ it follows that $\rho'(x, y)=0,$ i.e., $\rho'_+(x, y) = -\rho'_-(x, y).$ This implies from Lemma \ref{functional} that
	 \begin{eqnarray*}
	 	&&\max\bigg\{\sum_{k=1}^n u_ky_k : u \in Ext(J(x))\bigg\}= -\min\bigg\{\sum_{k=1}^n u_ky_k : u \in Ext(J(x))\bigg\}\\
	 	&\implies& \sum_{k \notin \mathcal{Z}_x} sgn(x_k)y_k + \sum_{k\in \mathcal{Z}_x} |y_k| = -\sum_{k \notin \mathcal{Z}_x} sgn(x_k)y_k + \sum_{k\in \mathcal{Z}_x} |y_k|
	 \end{eqnarray*}
	  Therefore,  $\sum_{k \notin \mathcal{Z}_x} sgn(x_k)y_k=0.$  This means that $ \sum_{k=1}^n sgn(x_k)y_k=0.$
	  The converse part is immediate using similar arguments as above. This completes the proof of (i).\\
	  
	  (ii) Observe that for any $x \in S_{\ell_\infty^n},$ $$Ext(J(x))=\big\{(0, 0, \ldots, sgn(x_i), 0, \ldots, 0): i \in \mathcal{I}_x\big\}.$$ Since $x \perp_\rho y,$ it follows from Lemma \ref{functional} that $\max_{i \in \mathcal{I}_x}\{sgn(x_i)y_i\} + \min_{i \in \mathcal{I}_x}\{sgn(x_i)y_i\}=0.$ This proves (ii).
\end{proof}

 In the following theorem we give a complete description of $\rho$-left symmetric points of $\ell_1^n.$
 
 \begin{theorem}\label{left; sum}
 	Let $x =(x_1, x_2, \ldots, x_n) \in S_{\ell_1^n}.$  Then $x$ is $\rho$-left symmetric if and only if either of the following holds true:
 	\begin{itemize}
 		\item[(i)] $x \in Ext(B_{\ell_1^n}).$
 		\item[(ii)] $|x_i|=|x_j|=\frac{1}{2},$ for some $i, j\in \{1, 2, \ldots, n\}$ and $x_k=0,$ otherwise.
 	\end{itemize}
 \end{theorem}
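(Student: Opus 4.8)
The plan is to prove both directions using the $\rho$-orthogonality characterization in $\ell_1^n$ from Proposition~\ref{ortho; characterization}(i), namely that $x \perp_\rho y \iff \sum_{i=1}^n \mathrm{sgn}(x_i)y_i = 0$, together with the left-symmetry characterization of Theorem~\ref{left sym}. First I would dispose of the sufficiency direction for case (i): if $x \in Ext(B_{\ell_1^n})$, say $x = \pm e_j$, then the $\rho$-orthogonality condition $\sum_i \mathrm{sgn}(x_i)y_i = 0$ collapses to $y_j = 0$ (since only the $j$-th coordinate contributes to $\mathrm{sgn}(x_i)$). I would then check directly that for any such $y$ with $y_j = 0$ we indeed have $y \perp_\rho x$, which should reduce to verifying that $\mathrm{sgn}(y_k)x_k$ summed over $k$ vanishes — but $x$ is supported only on coordinate $j$ where $y_j = 0$, so this holds trivially. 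Case (ii) where $|x_i| = |x_j| = \tfrac12$ requires a short computation using the same criterion.

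**The necessity direction — the main obstacle.**
The hard part will be the necessity: showing that a $\rho$-left symmetric $x$ must have at most two nonzero coordinates, with the two-coordinate case forcing both to equal $\tfrac12$ in absolute value. My strategy is contrapositive. Suppose $x$ has either three or more nonzero coordinates, or exactly two nonzero coordinates of unequal modulus; I would construct an explicit $y$ with $x \perp_\rho y$ but $y \not\perp_\rho x$, violating symmetry. The natural device is to exploit the asymmetry in the $\mathrm{sgn}$ criterion: the relation $x \perp_\rho y$ constrains $y$ through $\mathrm{sgn}(x_i)$, while $y \perp_\rho x$ constrains through $\mathrm{sgn}(y_i)$, and these sign patterns can be engineered to disagree. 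I expect the cleanest route is to use Theorem~\ref{left sym}, testing conditions (i) and (ii) there: I would pick $y$ so that $\rho'(x,y) = 0$ (placing $y$ in $x^{\rho+}\cap x^{\rho-}$) but such that the computed value $\rho'(y,x)$ via Proposition~\ref{ortho; characterization}(i) is strictly nonzero, contradicting left-symmetry.

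**Handling the two sub-cases of necessity.**
For the case of three or more nonzero coordinates, I would take a $y$ supported on those coordinates with signs chosen so that $\sum_i \mathrm{sgn}(x_i)y_i = 0$ (easy to arrange with three degrees of freedom), while ensuring the reverse sum $\sum_i \mathrm{sgn}(y_i)x_i \neq 0$; the extra freedom from having three or more active coordinates is precisely what lets these two linear conditions be satisfied and violated simultaneously. For exactly two nonzero coordinates $x_i, x_j$ with $|x_i| \neq |x_j|$, the orthogonality conditions become a pair of scalar equations, and I would show that left-symmetry forces a matching of the two moduli, pinning them to $\tfrac12$ each by the normalization $|x_i| + |x_j| = 1$ together with the required sign balance. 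The delicate point throughout is bookkeeping the smooth versus non-smooth structure of the points involved (recall a point of $\ell_1^n$ is non-smooth exactly when some coordinate vanishes), since $\rho$-orthogonality and Birkhoff--James orthogonality diverge precisely at non-smooth points; I would keep careful track of $\mathcal{Z}_x$ and $\mathcal{Z}_y$ when applying the extreme-functional description of $Ext(J(x))$ derived in the proof of Proposition~\ref{ortho; characterization}(i).
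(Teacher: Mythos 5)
Your proposal is correct and essentially mirrors the paper's own proof: sufficiency by direct computation with Proposition \ref{ortho; characterization}(i), and necessity by contrapositive, exhibiting an explicit $y$ with $x \perp_\rho y$ but $y \not\perp_\rho x$ by exploiting the asymmetry of the sign criterion (the paper organizes the cases slightly differently --- ``some pair of nonzero coordinates with unequal moduli'' versus ``all nonzero moduli equal with more than two nonzero entries'' --- rather than your ``$r \geq 3$'' versus ``exactly two, unequal,'' but the constructions are the same in spirit, and invoking Theorem \ref{left sym} is unnecessary since the constructed $y$ contradicts left symmetry by definition). The one point in your sketch that needs more than ``extra degrees of freedom'' is the $r \geq 3$ case: the reverse condition $\sum_i sgn(y_i)x_i \neq 0$ is not linear in $y$ (it depends only on the sign pattern of $y$), and a careless choice can fail --- for $x = (\frac{1}{2}, \frac{1}{4}, \frac{1}{4})$, putting the negative weight on the first coordinate makes the reverse sum vanish --- so you must argue that some mixed sign pattern works (for instance, the patterns $(-,+,+,\ldots)$ and $(+,-,+,\ldots)$ cannot both give a vanishing reverse sum, since those two sums add to $2\sum_{i \geq 3}|x_i| > 0$); the paper sidesteps this entirely by handling any unequal-moduli pair first and reserving its explicit choice $y = (1-r, 1, \ldots, 1)$ for the case where all nonzero moduli are equal, where no cancellation can occur.
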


 \begin{proof}
 	First we prove the sufficient part. Suppose that (i) holds true. Then $x_i=\pm 1,$ for some $i \in \{1, 2, \ldots, n\}$ and $x_j=0,$ for all $j \in \{1, 2, \ldots, n\}\setminus \{i\}.$ Suppose that $x\perp_\rho y,$ for some $y=(y_1, y_2, \ldots, y_n) \in S_{\ell_1^n}.$ Then from Proposition \ref{ortho; characterization} we obtain that $y_i=0.$ Therefore, $\sum_{k=1}^n sgn(y_k)x_k = 0.$ Using Proposition  \ref{ortho; characterization} again, we obtain that $y \perp_\rho x.$ Thus $x$ is $\rho$-left symmetric.
 	Now suppose that (ii) holds true. Also, assume that $x\perp_\rho y,$ for some $y=(y_1, y_2, \ldots, y_n) \in S_{\ell_1^n}.$ Then from Proposition \ref{ortho; characterization} we observe that \begin{eqnarray*}
 	sgn(x_i)y_i + sgn(x_j)y_j = 0  &\implies& sgn(x_i)sgn(y_i)+sgn(x_j)sgn(y_j)=0\\
 	&\implies& sgn(y_i)sgn(x_i)|x_i|+sgn(y_j)sgn(x_j)|x_j|=0\\
 	&\implies& \sum_{k=1}^n sgn(y_k)x_k = 0.
 	\end{eqnarray*}
 	This proves that $y\perp_\rho x.$  Thus the proof of the sufficient part is done.\\
 	
 	Next we prove the necessary part. Suppose on the contrary that $x =(x_1, x_2, \ldots, x_n)\in S_{\ell_1^n}$ does not satisfy (i) and (ii). Then clearly, $|\mathcal{Z}_x^c| \geq 2. $ Suppose that there exist $i, j \in \mathcal{Z}_x^c$ such that $|x_i| \neq |x_j|.$ Let $y=(y_1, y_2, \ldots, y_n) \in \ell_1^n$ be such that $|y_i|=|y_j|$ with $sgn(y_i)= sgn(x_i)$ and $sgn(y_j)= -sgn(x_j)$ and $y_k =0,$ for all $k \in \{1, 2, \ldots, n\} \setminus \{i, j\}.$ Then one can observe from Proposition \ref{ortho; characterization} that $x \perp_\rho y$ whereas, $y \not\perp_\rho x.$ Thus $x$ is not $\rho$-left symmetric. Now suppose that for all $j, k \in \mathcal{Z}_x^c,$  $|x_j|=|x_k|.$ It is trivial to see that $|\mathcal{Z}_x^c| > 2,$ otherwise (i) or (ii) will be satisfied. Without loss of generality assume that $x_j > 0,$ for all $j \in \mathcal{Z}_x^c.$ Suppose that $|\mathcal{Z}_x^c|=r.$  Take $y=(y_1, y_2, \ldots, y_n)\in \ell_1^n$ such that  $y_{k_0} = 1-r,$ for some $k_0 \in \mathcal{Z}_x^c$ and $y_j=1,$ for all $j \in \{1, 2, \ldots, n\}\setminus \{k_0\}.$ Note that $\sum_{j=1}^n sgn(x_j)y_j = \sum_{i \in \mathcal{Z}_x^c} y_j = 0.$ Thus $x \perp_\rho y.$ On the other hand, we can see that $sgn(y_{k_0})=-1$ and $sgn(y_j)=+1,$ for all $j \in \{1, 2, \ldots, n\}\setminus \{k_0\}.$ Since $|\mathcal{Z}_x^c| > 2$ and $|x_j|$ are equal for all $j \in \mathcal{Z}_x^c,$ it follows that $\sum_{j=1}^n sgn(y_j)x_j \neq 0.$ This gives us $y \not\perp_\rho x,$ which contradicts the fact that $x$ is $\rho$-left symmetric. This completes the proof of the necessary part. 
 	
% 	First of all if $\mathcal{Z}_x=\emptyset,$ then $x$ is a smooth point. From Proposition \ref{smooth}, we clearly see that $x$ is not $\rho$-left symmetric. Now let $\mathcal{Z}_x \neq \emptyset.$ Assume that $\mathcal{Z}_x=\{1, 2, \ldots, k\}.$ Note that the cardinality of the set $\{1, 2, \ldots, n\} \setminus \mathcal{Z}_x$ is greater than or equal to 2. Otherwise, (i) would be satisfied. Now let there exist two nonzero real number $x_i, x_j$ be such that $|x_i| \neq |x_j|.$ Therefore, $(x_i, x_j)$ is not a $\rho$-left symmetric poi t in $\ell_1^2.$ Then applying Lemma \ref{cut} we get that $x$ is not a $\rho$-left symmetric point, which is a contradiction. Now suppose that $|x_p|=\frac{1}{n-k},$ for all $p\in \{1, 2, \ldots, n\}\setminus\mathcal{Z}_x.$ Since $\rho$-orthogonality is preserved under signed permutation of the coordinates of $x$, then without loss of generality we may assume that $x_i=0,$ when $i\in \mathcal{Z}_x$ and $x_i=\frac{1}{n-k},$ elsewhere. Consider the element $y=(y_1, y_2, \ldots, y_n)\in \ell_1^n$ is such that $y_i=1,$ when $i\in \{1, 2, \ldots, n-1\}$ and $y_n=\frac{n-k-1}{n-k}.$ It can be easily verified that $x \perp_\rho y,$ whereas $y \not\perp_\rho x.$ This contradicts the fact that $x$ is $\rho$-left symmetric. This completes the proof of the theorem.
 	
 \end{proof}
 
 Next we characterize the $\rho$-right symmetric points in $\ell_1^n.$ 
 
 \begin{theorem}\label{right; sum}
 	Let $x=(x_1, x_2, \ldots, x_n) \in S_{\ell_1^n}.$ Then $x$ is $\rho$-right symmetric if and only if either of the following conditions hold true: 
 	\begin{itemize}
 		\item[(i)] $x \in Ext(B_{\ell_1^n}).$
 		\item[(ii)] For any two nonempty disjoint sets $A, B \subset \mathcal{Z}_x^c,$ $|\sum_{j\in A} x_j| \neq |\sum_{j\in B} x_j|.$ 
 	\end{itemize}
 \end{theorem}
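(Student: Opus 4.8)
The plan is to translate both $\rho$-orthogonality relations into the sign conditions of Proposition \ref{ortho; characterization}(i) and then reduce the whole problem to a purely combinatorial statement about signed vanishing combinations of the coordinates of $x$. Using homogeneity of $\perp_\rho$, I may test right symmetry against arbitrary $y \neq 0$ rather than unit vectors. By Proposition \ref{ortho; characterization}(i), $y \perp_\rho x$ means $\sum_k sgn(y_k) x_k = 0$ and $x \perp_\rho y$ means $\sum_k sgn(x_k) y_k = 0$; since $x_k = 0$ and $sgn(x_k) = 0$ for $k \in \mathcal{Z}_x$, only the indices in $\mathcal{Z}_x^c$ play a role in either relation.

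First I would prove the reformulation: $x$ is $\rho$-right symmetric if and only if the only $\epsilon \in \{-1, 0, 1\}^n$ supported on $\mathcal{Z}_x^c$ with $\sum_k \epsilon_k x_k = 0$ is $\epsilon = 0$; call this condition $(\star)$. The easy direction is that $(\star)$ forces right symmetry: if $y \perp_\rho x$, then $\epsilon_k := sgn(y_k)$ for $k \in \mathcal{Z}_x^c$ is a signed vanishing combination, so $(\star)$ makes it trivial, whence $y_k = 0$ on $\mathcal{Z}_x^c$ and therefore $\sum_k sgn(x_k) y_k = 0$, i.e., $x \perp_\rho y$. For the converse I would, given a nontrivial $\epsilon$ with $\sum_k \epsilon_k x_k = 0$, build a witness $y$ with $sgn(y_k) = \epsilon_k$ whose magnitudes are chosen so that $\sum_k sgn(x_k) y_k \neq 0$; since the coefficients $sgn(x_k)\epsilon_k$ equal $\pm 1$ on the nonempty support of $\epsilon$, the linear functional $m \mapsto \sum_k sgn(x_k)\epsilon_k m_k$ is not identically zero on the positive orthant, so a suitable choice of positive magnitudes $m_k$ breaks $x \perp_\rho y$ while keeping $y \perp_\rho x$.

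The heart of the argument is then to show $(\star) \iff$ [(i) or (ii)]. For the degenerate case $\abs{\mathcal{Z}_x^c} = 1$, which is exactly (i) with $x \in Ext(B_{\ell_1^n})$, no single nonzero coordinate can vanish, so $(\star)$ holds automatically and (ii) is vacuously true. In general I would split a nonzero signed vanishing combination $\epsilon$ into $A = \{k : \epsilon_k = 1\}$ and $B = \{k : \epsilon_k = -1\}$, giving $\sum_{k \in A} x_k = \sum_{k \in B} x_k$. If both $A, B$ are nonempty this immediately violates (ii). The delicate sub-case is when one of them, say $B$, is empty: then $\sum_{k \in A} x_k = 0$ with $A$ nonempty, and I must further split $A$ into its positive- and negative-coordinate parts $A_+, A_-$, observe that both are nonempty (a sum of nonzero terms all of one sign cannot vanish) and that $\abs{\sum_{k \in A_+} x_k} = \abs{\sum_{k \in A_-} x_k}$, again violating (ii). Conversely, a violation of (ii), namely disjoint nonempty $A, B \subset \mathcal{Z}_x^c$ with $\abs{\sum_{j \in A} x_j} = \abs{\sum_{j \in B} x_j}$, yields a nontrivial $\epsilon$ by taking the $\pm 1$ pattern $\mathbf{1}_A - \mathbf{1}_B$ when the signed sums agree and $\mathbf{1}_A + \mathbf{1}_B$ when they are opposite.

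I expect the main obstacle to be exactly this bookkeeping between the signed condition $(\star)$ and the absolute-value condition (ii): the two are not literally the same statement, and the empty-$B$ sub-case, a single subset whose signed sum vanishes, must be re-encoded as a genuine pair of disjoint subsets with equal absolute sums. Keeping both directions of this translation honest, and making sure the witness $y$ in the ``not right symmetric'' direction simultaneously satisfies $y \perp_\rho x$ and fails $x \perp_\rho y$, is where the care is needed; the rest is a direct application of Proposition \ref{ortho; characterization} and homogeneity.
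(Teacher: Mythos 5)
Your proposal is correct and takes essentially the same route as the paper: both translate $y \perp_\rho x$ and $x \perp_\rho y$ via Proposition \ref{ortho; characterization}, reduce right symmetry to the non-existence of a nontrivial sign vector supported on $\mathcal{Z}_x^c$ annihilating $x$, and, when (ii) fails, build a witness $y$ with prescribed signs and cancellation-free magnitudes (your generic choice of positive magnitudes versus the paper's explicit powers $10^j$, $10^{-j}$). In fact your write-up is slightly more careful than the paper's at two points: the sub-case where the sign vector is supported on a single set (your $A_+/A_-$ splitting, which the paper dismisses with ``Clearly''), and the equal-versus-opposite-sum dichotomy handled by your two patterns $\mathbf{1}_A - \mathbf{1}_B$ and $\mathbf{1}_A + \mathbf{1}_B$, which the paper compresses into a not-quite-literal ``without loss of generality''.
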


  \begin{proof}
     We first prove the sufficient part. Suppose that (i) holds true, i.e., $x \in Ext(B_{\ell_1^n}).$ Then $x_i = \pm 1,$ for some $1 \leq i \leq n$ and $x_j=0,$ for all $j \in \{1, 2, \ldots, n\}\setminus \{i\}.$ Suppose that $y \perp_\rho x,$ where $y=(y_1, y_2, \ldots, y_n) \in S_{\ell_1^n}.$ Then from Proposition \ref{ortho; characterization} it is easy to see that $y_i = 0.$ Since $x_j = 0,$ for all $j \neq i,$ it follows that $\sum_{j=1}^n sgn(x_j)y_j =0.$ Thus again from Proposition \ref{ortho; characterization} we obtain that $x \perp_\rho y.$ This proves that $x$ is $\rho$-right symmetric. Now suppose that (ii) holds true. We claim that if $y \perp_\rho x,$ then we have $y_i = 0,$ for all $i \in \mathcal{Z}_x^c.$ If possible, let $y_k \neq 0,$ for some $k \in \mathcal{Z}_x^c.$ Let us consider the two sets $A, B$ as:
     \[A_1 = \{j \in \mathcal{Z}_x^c: sgn(y_j) = +1\}, A_2=\{j \in \mathcal{Z}_x^c: sgn(y_j) = -1\}.\]
     Since $y_k \neq 0,$ for some $k \in \mathcal{Z}_x^c,$ it follows that $A_1 \cup A_2 \neq \emptyset.$ Since $y \perp_\rho x,$ then from Proposition \ref{ortho; characterization}, $\sum_{j =1}^n sgn(y_j)x_j =0.$  Note that whenever $|A_1 \cup A_2| = 1, $ we have $x_k = 0,$ where $k \in \mathcal{Z}_x^c.$ On the other hand, suppose that $|A_1 \cup A_2| \geq 2.$ Then Clearly, we obtain two sets $A, B \subset \mathcal{Z}_x^c$ such that  $|\sum_{j\in A} x_j| =|\sum_{j\in B} x_j|.$ Both these cases are not  possible according to our assumption. So our claim is established. Since $y_j=0,$ for all $j \in \mathcal{Z}_x^c,$ it is easy to see from Proposition \ref{ortho; characterization} that $x \perp_\rho y,$ i.e., $x$ is $\rho$-right symmetric.\\
     
     Now we prove the necessary part. Since $x \in S_{\ell_1^n},$ we have  $\mathcal{Z}_x^c \neq \emptyset.$  If $|\mathcal{Z}_x^c| =1 $ then we have $x \in Ext(B_{\ell_1^n}),$ i.e., (i) holds true. Now let $|\mathcal{Z}_x^c| \geq 2.$  Suppose on the contrary that there exist two nonempty disjoint subsets $A$ and $B$ of $\mathcal{Z}_x^c$ such that $|\sum_{j\in A} x_j| = |\sum_{j\in B} x_j|.$ Without loss of generality assume that $\sum_{j\in A} x_j = \sum_{j\in B} x_j.$ Then  choose $y=(y_1, y_2, \ldots, y_n) \in \ell_1^n$ such that
     \begin{eqnarray*}
     	  y_j &=& 10^j, \, \, j\in A\\
     	       &=& -\frac{1}{10^j}, \, \, j \in B\\
     	       &=& 0, \, \, \quad \quad j \in \{1, 2, \ldots, n\}\setminus (A\cup B).
     \end{eqnarray*} 
     Note that $\sum_{j=1}^n sgn(y_j)x_j = \sum_{j \in A\cup B} sgn(y_j)x_j =0.$ Therefore, from Proposition \ref{ortho; characterization} we obtain that $y \perp_\rho x.$ But one can observe from the construction of $y$ that $\sum_{j=1}^n sgn(x_j)y_j = \sum_{A \cup B} sgn(x_j)y_j \neq 0. $ This shows that $x \not\perp_\rho y.$ Thus we arrive at a contradiction to the fact that $x$ is $\rho$-right symmetric. This completes the proof  the theorem.
  \end{proof}
 
 Combining Theorem \ref{left; sum} and Theorem \ref{right; sum} we note the following:
 
 \begin{theorem}
 	Let $x \in \ell_1^n.$ Then $x$ is $\rho$-symmetric if and only if $x$ is an extreme point of $\ell_1^n.$
 \end{theorem}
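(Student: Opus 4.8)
The plan is to prove the final theorem by combining the two preceding characterization theorems, namely Theorem~\ref{left; sum} ($\rho$-left symmetric points) and Theorem~\ref{right; sum} ($\rho$-right symmetric points), since by definition an element is $\rho$-symmetric precisely when it is simultaneously $\rho$-left symmetric and $\rho$-right symmetric. So the whole statement reduces to showing that the intersection of the two characterizing conditions is exactly the set of extreme points $Ext(B_{\ell_1^n})$.

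First I would establish the easy direction: if $x \in Ext(B_{\ell_1^n})$, then by Theorem~\ref{left; sum}(i) $x$ is $\rho$-left symmetric and by Theorem~\ref{right; sum}(i) $x$ is $\rho$-right symmetric, hence $x$ is $\rho$-symmetric. This requires nothing beyond quoting the two results.

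For the converse, I would assume $x \in S_{\ell_1^n}$ is $\rho$-symmetric and show $x$ must be an extreme point. Since $x$ is $\rho$-left symmetric, Theorem~\ref{left; sum} forces either $x \in Ext(B_{\ell_1^n})$ (condition (i)), in which case we are done, or $x$ satisfies condition (ii), i.e.\ $|x_i|=|x_j|=\tfrac{1}{2}$ for some pair $i,j$ and $x_k=0$ otherwise. So the crux is to rule out the second alternative using $\rho$-right symmetry. For such an $x$ we have $\mathcal{Z}_x^c = \{i,j\}$, and taking the two singleton sets $A=\{i\}$ and $B=\{j\}$ — which are nonempty, disjoint subsets of $\mathcal{Z}_x^c$ — we get $|\sum_{k\in A}x_k| = |x_i| = \tfrac{1}{2} = |x_j| = |\sum_{k\in B}x_k|$. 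This directly violates condition (ii) of Theorem~\ref{right; sum}, so $x$ fails to satisfy that condition, and since $|\mathcal{Z}_x^c| = 2 \neq 1$ it also fails condition (i) of that theorem. Hence $x$ is \emph{not} $\rho$-right symmetric, contradicting the assumption that $x$ is $\rho$-symmetric.

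The main (and only mild) obstacle is simply keeping the bookkeeping straight: one must verify that the type-(ii) left-symmetric point has exactly two nonzero coordinates of equal modulus, so that the forbidden pair of sets $A,B$ in the right-symmetric criterion is available. Once that observation is in place the contradiction is immediate, and the conclusion is that the only $x$ satisfying both criteria are the extreme points, which completes the proof.
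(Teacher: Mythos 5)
Your proposal is correct and takes essentially the same route as the paper: the paper obtains this theorem precisely by combining the characterizations of $\rho$-left and $\rho$-right symmetric points (Theorems \ref{left; sum} and \ref{right; sum}), which is your plan. Your key step---ruling out the type-(ii) left-symmetric points by applying the right-symmetric criterion to the singleton sets $A=\{i\}$, $B=\{j\}$, whose sums have equal modulus $\tfrac{1}{2}$---is exactly the bookkeeping the paper leaves implicit, so nothing further is needed.
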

 
 In the following example we  describe some $\rho$-left and $\rho$-right symmetric points of $\ell_1^n$ other than the extreme points.
 
 \begin{example}
 	 Suppose that $\mathbb{X}=\ell_1^4$ and let us consider three points of $x_1, x_2, x_3 \in \mathbb{X}$ such that  $x_1=(\frac{1}{2}, 0, 0, -\frac{1}{2}), x_2=(\frac{1}{2}, \frac{1}{3}, 0, -\frac{1}{4})$ and $x_3=(\frac{1}{4}, \frac{1}{4}, \frac{1}{4}, \frac{1}{4}).$  From Theorem \ref{left; sum} it is easy to see that $x_1$ is $\rho$-left symmetric whereas, applying Theorem \ref{right; sum} we have $x_2$ is $\rho$-right symmetric.  On the other hand, $x_3$ is neither $\rho$-left nor $\rho$-right symmetric in $\ell_1^4.$
 \end{example}
 
 In the following two theorems we characterize the $\rho$-left and $\rho$-right symmetric points in $\ell_\infty^n,$ respectively. 
 
 \begin{theorem}\label{left; sup}
 	Let $x=(x_1, x_2, \ldots, x_n)\in S_{\ell_\infty^n}.$  Then $x$ is $\rho$-left symmetric if and only if $x_j =0,$ for all $j \not\in \mathcal{I}_x.$ 
 \end{theorem}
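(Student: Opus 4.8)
The plan is to characterize $\rho$-left symmetric points in $\ell_\infty^n$ using the $\rho$-orthogonality condition from Proposition \ref{ortho; characterization}(ii), which states that $x \perp_\rho y$ if and only if $\max_{i \in \mathcal{I}_x}\{sgn(x_i)y_i\} + \min_{i \in \mathcal{I}_x}\{sgn(x_i)y_i\}=0$. The key observation is that the condition $x \perp_\rho y$ depends \emph{only} on the coordinates of $y$ indexed by $\mathcal{I}_x$, whereas the condition $y \perp_\rho x$ involves $\mathcal{I}_y$, which can be any coordinate of $x$. First I would prove the sufficient part: assuming $x_j = 0$ for all $j \notin \mathcal{I}_x$, I take $y$ with $x \perp_\rho y$ and must show $y \perp_\rho x$. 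Since $x$ is zero off $\mathcal{I}_x$ and has $|x_i|=1$ on $\mathcal{I}_x$, the condition $y \perp_\rho x$ becomes $\max_{i \in \mathcal{I}_y}\{sgn(y_i)x_i\} + \min_{i \in \mathcal{I}_y}\{sgn(y_i)x_i\}=0$. I expect to exploit that $x$ takes only values in $\{-1,0,1\}$ to control these extrema via the $\rho$-orthogonality hypothesis on $x$.

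For the necessary part I would argue by contraposition: suppose $x_j \neq 0$ for some $j \notin \mathcal{I}_x$, meaning $0 < |x_j| < 1$, and construct $y$ witnessing the failure of symmetry. The strategy is to pick $y$ so that $x \perp_\rho y$ holds (easy to arrange by balancing the $sgn(x_i)y_i$ over $\mathcal{I}_x$) but $y \perp_\rho x$ fails. The natural construction is to make $\mathcal{I}_y$ concentrate on the index $j$ (and possibly one index in $\mathcal{I}_x$) so that the max/min condition for $y \perp_\rho x$ picks up the value $x_j$ with $|x_j| < 1$, preventing the required cancellation. Concretely, I would try to take $y$ supported on $\{i_0, j\}$ for some $i_0 \in \mathcal{I}_x$ with $|y_{i_0}| = |y_j| = 1$ (so $\mathcal{I}_y = \{i_0, j\}$) and signs chosen so that the single constraint from $x \perp_\rho y$ is satisfied, while the computation of $\max + \min$ for $y \perp_\rho x$ yields $\pm x_j \pm x_{i_0} = \pm x_j \pm 1 \neq 0$.

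The main obstacle I anticipate is the bookkeeping around the sets $\mathcal{I}_x$ and $\mathcal{I}_y$ and ensuring that the constructed $y$ genuinely lies on $S_{\ell_\infty^n}$ with $\mathcal{I}_y$ being exactly the intended index set, so that Proposition \ref{ortho; characterization}(ii) applies with the correct index set in each direction. A subtle point is that $\rho$-orthogonality in $\ell_\infty^n$ is asymmetric precisely because the defining index set switches from $\mathcal{I}_x$ to $\mathcal{I}_y$; the whole proof hinges on arranging $y$ so that $j \in \mathcal{I}_y$ forces $x_j$ into the extremal computation for $y \perp_\rho x$. I would handle the edge cases (for instance when $|\mathcal{I}_x|=1$, forcing $x$ to be an extreme point, which is trivially left symmetric) separately, and verify the sufficient direction carefully when $\mathcal{I}_y$ may include indices outside $\mathcal{I}_x$ where $x$ vanishes, since those contribute $sgn(y_i)\cdot 0 = 0$ to both the max and the min and thus can only help the cancellation.
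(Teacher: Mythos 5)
Your overall strategy --- characterizing both directions via Proposition \ref{ortho; characterization}(ii) and exploiting that $x$ takes values only in $\{0,\pm 1\}$ --- is the same as the paper's, but two of your proposed steps fail as stated. The more serious one is the witness in the necessity direction. If $y$ is supported on $\{i_0, j\}$ with $i_0 \in \mathcal{I}_x$, $j \notin \mathcal{I}_x$ and $|y_{i_0}|=1$, then the values $sgn(x_i)y_i$ for $i \in \mathcal{I}_x$ consist of $sgn(x_{i_0})y_{i_0}=\pm 1$ together with zeros (or of $sgn(x_{i_0})y_{i_0}$ alone when $|\mathcal{I}_x|=1$), so $\max_{i \in \mathcal{I}_x}\{sgn(x_i)y_i\}+\min_{i \in \mathcal{I}_x}\{sgn(x_i)y_i\}$ equals $sgn(x_{i_0})y_{i_0}$ (respectively $2\,sgn(x_{i_0})y_{i_0}$), which is nonzero for \emph{every} choice of signs; hence $x \not\perp_\rho y$. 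In fact the constraint $x \perp_\rho y$ forces $y_{i_0}=0$, so your construction can never be carried out. The correct witness simply omits $i_0$: take $y_j=1$ and all other coordinates $0$ (this is the paper's choice). Then $y_i=0$ for all $i \in \mathcal{I}_x$ gives $x \perp_\rho y$, while $\mathcal{I}_y=\{j\}$ gives $\max+\min = 2x_j \neq 0$, so $y \not\perp_\rho x$.

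Second, in the sufficiency direction your guiding heuristic --- that indices of $\mathcal{I}_y$ lying outside $\mathcal{I}_x$ contribute $0$ and ``can only help the cancellation'' --- is misleading and conceals the real work. Since the values $sgn(y_i)x_i$, $i \in \mathcal{I}_y$, lie in $\{0,\pm 1\}$, one has $\max+\min=0$ if and only if either both $+1$ and $-1$ occur, or all the values are $0$; a configuration of $+1$'s and $0$'s yields $\max+\min=1\neq 0$, so zeros never produce cancellation by themselves. What must actually be shown is that this unbalanced configuration cannot occur when $x \perp_\rho y$. The paper does this by taking $k,l \in \mathcal{I}_x$ attaining the max and min of $sgn(x_i)y_i$, observing that $x \perp_\rho y$ forces $|y_k|=|y_l|$, and splitting into two cases: if $|y_k|=|y_l|=1$ then $k,l \in \mathcal{I}_y$ and they contribute $+1$ and $-1$ to the computation for $y \perp_\rho x$; if $|y_k|=|y_l|<1$ then every $i \in \mathcal{I}_x$ has $|y_i|<1$, so $\mathcal{I}_x \cap \mathcal{I}_y = \emptyset$ and all contributions vanish. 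Your proposal does not contain this dichotomy, and without it the sufficiency direction is not established.
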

 
 \begin{proof}
 	We first prove the sufficient part. Since $\|x\|=1,$ we have $\mathcal{I}_x\neq \emptyset.$  Let $|\mathcal{I}_x|=1.$ Then $x_i=\pm 1,$ for some $i\in \{1, 2, \ldots, n\}.$   If $x \perp_\rho y,$ for some $y\in S_{\ell_\infty^n}, $ then from Proposition \ref{ortho; characterization}, we get $y_i=0.$ Therefore, $\mathcal{I}_y \subset \{1, 2, \ldots, n\}\setminus \{i\}.$  Since $x_j=0,$ for all $j\in \{1, 2, \ldots, n\} \setminus \{i\},$ it follows from Proposition \ref{ortho; characterization} that $y\perp_\rho x.$ Therefore,  $x$ is $\rho$-left symmetric. Suppose that $|\mathcal{I}_x|\geq 2.$ From Proposition \ref{ortho; characterization} $x \perp_\rho y$ implies that 
 	\begin{equation}
 	\max_{j \in \mathcal{I}_x}\{sgn(x_j)y_j\}+\min_{j \in \mathcal{I}_x}\{sgn(x_j)y_j\}=0.
 	\end{equation}
 	 Suppose that $\max_{j \in \mathcal{I}_x}\{sgn(x_j)y_j\} = sgn(x_k)y_k$ and $\min_{j \in \mathcal{I}_x}\{sgn(x_j)y_j\}=sgn(x_l)y_l,$ for some $k, l\in \mathcal{I}_x.$ Then from Equation (3), it is clear that $|y_k|=|y_l|.$ Now either of the following holds:
 	\begin{itemize}
 		\item[(a)] $|y_k|=|y_l|=1,$ for some $k, l\in \mathcal{I}_x.$
 		\item[(b)] $|y_k|=|y_l|<1,$ for all $k, l\in \mathcal{I}_x.$
 	\end{itemize}
 	If (a) holds, then $k, l \in \mathcal{I}_y$ and consequently, $sgn(y_k)x_k + sgn(y_l)x_l= \max_{j \in \mathcal{I}_y}\{sgn(y_j)x_j\}+\min_{j \in \mathcal{I}_y}\{sgn(y_j)x_j\}=0.$ Thus by Proposition \ref{ortho; characterization}, we get $y \perp_\rho x.$ This implies $x$ is $\rho$-left symmetric. If (b) holds, then $\mathcal{I}_y \cap \mathcal{I}_x =\emptyset.$ From our hypothesis observe that $sgn(y_i)x_i=0,$ for all $i \in \mathcal{I}_y.$ Therefore, $y \perp_\rho x.$ This also shows that $x$ is $\rho$-left symmetric.  \\

 %	Without loss of generality we may assume that $x_i=1,$ for all $i\in\mathcal{I}_x.$ Let $x \perp_\rho y,$ for some $y=(y_1, y_2, \ldots, y_n) \in S_{\ell_\infty^n}.$ Then from Proposition \ref{ortho; characterization}, we see that $\max\{y_i: i \in \mathcal{I}_x\} + \min \{y_i: i \in \mathcal{I}_x\} =0.$ If $\mathcal{I}_x \cap \mathcal{I}_y = \emptyset,$  it is easy to see that $\pm f_i \notin Ext(J(y)),$ for all $i \in \{1, 2, \ldots, k\}.$ Therefore, for any $f \in Ext(J(y)),$ $f(x)=0.$ Thus we have $\rho'_+(y, x)=\rho'_-(y, x) = \rho'(y, x)=0.$ On the other hand if $\mathcal{I}_x \cap \mathcal{I}_y \neq \emptyset,$ then for some $f_i \in Ext(J(x)),$ $|f_i(y)| = 1,$ where $1 \leq i \leq k.$ Clearly, $f_i \in Ext(J(y))$ or $-f_i \in Ext(J(y)).$ Suppose that $f_i \in Ext(J(y)).$  Since $x \perp_\rho y,$ it follows from Proposition \ref{ortho; characterization} that there exists $j \in \{1, 2, \ldots, k\} \setminus \{i\}$ such that $f_j(y) =-1.$ Now  it is easy to verify that $\max_{i \in \mathcal{I}_y}\{sgn(y_i)x_i\} + \min_{i \in \mathcal{I}_y}\{sgn(y_i)x_i\}=0.$ Thus from Proposition \ref{ortho; characterization} we get $y \perp_\rho x.$ This completes the proof of the sufficient part. \\
 	
 	To show the necessary part suppose on the contrary that there exists $j \in \{1, 2, \ldots, n\}$ such that $0 < |x_j| < 1.$ Then we take $y=(y_1, y_2, \ldots, y_n)$ such that $y_j =1$ and $y_i =0,$ for all $i \in \{1, 2, \ldots, n\} \setminus \{j\}.$ Note that $\mathcal{I}_x \cap \mathcal{I}_y = \emptyset.$ Therefore, $\max_{i \in \mathcal{I}_x}\{sgn(x_i)y_i\}=0=\min_{i \in \mathcal{I}_x}\{sgn(x_i)y_i\}.$ Using Proposition \ref{ortho; characterization},  we have $x \perp_\rho y.$ On the other hand, observe that $\mathcal{I}_y=\{j\}$ and therefore,  $\max_{i \in \mathcal{I}_y}\{sgn(y_i)x_i\}=  \min_{i \in \mathcal{I}_y}\{sgn(y_i)x_i\}=x_j \neq 0.$ Thus we get $y \not\perp_\rho x,$ which contradicts that $x$ is $\rho$-left symmetric.  This completes the proof of the theorem.
 	
 	%for any $f \in Ext(J(x)),$ we have $f(y)=0.$ Therefore, $x \perp_\rho y.$ Now observe that $Ext(J(y))= \{f_j\},$ where $f_j(z)=z_j,$ for all $z=(z_1, z_2, \ldots, z_n) \in \ell_\infty^n.$ This implies that $f_j(x)=x_j \neq 0.$ This shows that $\rho'(y, x) \neq 0,$ and therefore, $y \not\perp_\rho x,$ a contradiction. Hence the theorem.

 \end{proof}

 \begin{theorem}\label{right; sup}
 	Let $x=(x_1, x_2, \ldots, x_n)\in S_{\ell_\infty^n}.$ Then $x$ is $\rho$-right symmetric if and only if either of the following holds true:
 	\begin{itemize}
 		\item[(i)] $x \in Ext(B_{\ell_\infty^n})$
 		\item[(ii)] for each $j \in \{1, 2, \ldots, n\} \setminus \mathcal{I}_x,$  $0 < |x_j| <1.$ Moreover,  $|x_j| \neq |x_k|,$ for all $j, k \in \{1, 2, \ldots, n\} \setminus \mathcal{I}_x.$ 
 	\end{itemize}
 \end{theorem}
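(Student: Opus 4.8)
The plan is to work entirely through the coordinate description of $\rho$-orthogonality in $\ell_\infty^n$ supplied by Proposition \ref{ortho; characterization}(ii). The key structural fact to exploit is that whether $x \perp_\rho y$ holds depends only on the coordinates of $y$ indexed by $\mathcal{I}_x$, whereas whether $y \perp_\rho x$ holds depends only on the coordinates of $x$ indexed by $\mathcal{I}_y$. I would prove sufficiency and necessity separately, following the template of Theorems \ref{left; sum}--\ref{right; sum}.

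For sufficiency I would start from an arbitrary $y \in S_{\ell_\infty^n}$ with $y \perp_\rho x$ and aim to deduce $x \perp_\rho y$. Writing $M = \max_{i \in \mathcal{I}_y} sgn(y_i)x_i$, the hypothesis $y \perp_\rho x$ forces the corresponding minimum to equal $-M$; picking $a, b \in \mathcal{I}_y$ realizing the maximum and the minimum gives $sgn(y_a)sgn(x_a) = +1$, $sgn(y_b)sgn(x_b) = -1$ and $|x_a| = |x_b| = M$ with $a \neq b$. Under hypothesis (ii) every coordinate of $x$ is nonzero (those in $\mathcal{I}_x$ have modulus $1$, the rest have $0 < |x_j| < 1$), so $M > 0$, since otherwise all $x_i$ with $i \in \mathcal{I}_y$ would vanish. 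The crucial step, and the main obstacle, is to rule out $M < 1$: if $M < 1$ then $a, b \notin \mathcal{I}_x$, and the distinctness clause of (ii) makes $|x_a| = |x_b|$ impossible for $a \neq b$. Hence $M = 1$, so $a, b \in \mathcal{I}_x$, whereupon $sgn(x_a)y_a = 1$ and $sgn(x_b)y_b = -1$ while $|sgn(x_i)y_i| \leq 1$ throughout $\mathcal{I}_x$; thus the max--min sum over $\mathcal{I}_x$ vanishes and $x \perp_\rho y$. Case (i) is the same argument with $M = 1$ automatic, as every modulus equals one.

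For necessity I would argue by contraposition, showing that if (ii) fails then $x$ is not $\rho$-right symmetric; note that failure of (ii) already forces $\mathcal{I}_x$ to be proper, hence failure of (i). Negating (ii) yields either a non-$\mathcal{I}_x$ index $j$ with $x_j = 0$, or two distinct non-$\mathcal{I}_x$ indices $j, k$ with $0 < |x_j| = |x_k| < 1$. In the first situation set $y_j = 1$; in the second set $y_j = sgn(x_j)$ and $y_k = -sgn(x_k)$; in both situations additionally set $y_i = sgn(x_i)\epsilon$ for $i \in \mathcal{I}_x$ with a fixed small $\epsilon \in (0,1)$, and $y_i = 0$ elsewhere. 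The unit-modulus coordinates (at $j$, or at $j$ and $k$) make up $\mathcal{I}_y$ and are arranged so that $\max_{i \in \mathcal{I}_y} sgn(y_i)x_i + \min_{i \in \mathcal{I}_y} sgn(y_i)x_i = 0$, giving $y \perp_\rho x$; meanwhile $sgn(x_i)y_i = \epsilon$ for every $i \in \mathcal{I}_x$, so the max--min sum over $\mathcal{I}_x$ equals $2\epsilon \neq 0$ and $x \not\perp_\rho y$. This contradicts $\rho$-right symmetry. The delicate points are precisely the role of the distinctness hypothesis in forcing $M = 1$ in sufficiency, and the use of the small sign-matched perturbation on $\mathcal{I}_x$ in necessity, which breaks left $\rho$-orthogonality without disturbing the balance that secures right $\rho$-orthogonality.
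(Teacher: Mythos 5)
Your proposal is correct and follows essentially the same route as the paper's proof: sufficiency is run through Proposition \ref{ortho; characterization}(ii), with the distinctness hypothesis forcing the indices that realize the maximum and minimum of $sgn(y_i)x_i$ over $\mathcal{I}_y$ to lie in $\mathcal{I}_x$ (so the extremal values transfer to give $x \perp_\rho y$), and necessity uses the same counterexamples ($y_j=1$ when $x_j=0$; $y_j=sgn(x_j)$, $y_k=-sgn(x_k)$ when $0<|x_j|=|x_k|<1$). The only cosmetic differences are that you handle cases (i) and (ii) uniformly via the quantity $M$ rather than separately, and your sign-matched filler $y_i=sgn(x_i)\epsilon$ on $\mathcal{I}_x$ (making the max--min sum equal $2\epsilon$) replaces the paper's choice $y_i=10^{-i}$, which achieves the same nonvanishing by distinctness of the values.
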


\begin{proof}
	To prove the sufficient part  first assume that (i) holds true. Since $\rho$-orthogonality is preserved under the signed permutation map \cite{W}, we may without loss of generality assume that $x=(1, 1, \ldots, 1).$ Suppose that $y \perp_\rho x,$ for some $y=(y_1, y_2, \ldots, y_n)\in S_{\ell_\infty^n}.$ From Proposition \ref{ortho; characterization}, we observe that  there exist $i, j \in \{1, 2, \ldots, n\}$ such that $y_i=1$ and $y_j=-1.$ Therefore, $\max_{i \in \mathcal{I}_x}\{sgn(x_i)y_i\}=1$ and $\min_{i \in \mathcal{I}_x}\{sgn(x_i)y_i\}=-1.$ From Proposition \ref{ortho; characterization}, we get that $x \perp_\rho y.$ Thus $x$ is $\rho$-right symmetric. Now suppose that (ii) holds true and $y \perp_\rho x,$ for some $y \in S_{\ell_\infty^n}.$ Clearly, $|\mathcal{I}_x| \leq n-1.$ If $|\mathcal{I}_x|=1$ then using Proposition \ref{ortho; characterization} one can see that there does not exists any nonzero $y \in \ell_\infty^n$ such that $y \perp_\rho x.$ Thus  $x$ is $\rho$-right symmetric, vacuously. Let $|\mathcal{I}_x| \geq 2.$ As $y \perp_\rho x,$ from Proposition \ref{ortho; characterization}, we get $\max_{i \in \mathcal{I}_y}\{sgn(y_i)x_i\}+\min_{i \in \mathcal{I}_y}\{sgn(y_i)x_i\} =0.$ This implies that $|x_j|=|x_k|,$ for some $j, k \in \mathcal{I}_y.$ Therefore, From hypothesis we note that $\mathcal{I}_x \cap \mathcal{I}_y \neq \emptyset.$  This implies that there exist $j, k \in \mathcal{I}_x \cap\mathcal{I}_y $ such that $sgn(x_j)y_j=1$ and $sgn(x_k)y_k=-1.$ This shows from Proposition \ref{ortho; characterization} that $x \perp_\rho y.$ Therefore, $x$ is $\rho$-right symmetric. 
	
	% and let  $x_i=1,$ for all $i \in \mathcal{I}_x.$ Then there exists $j, k \in \mathcal{I}_x \cap \mathcal{I}_y$ such that $f_{j}, -f_{k} \in Ext(J(y)),$ where  $f_{j}(z)=z_{j},$ for all $z=(z_1, z_2, \ldots, z_n)\in \ell_\infty^n.$  Therefore, it can be observed by similar arguments as given in the first part of this proof that $x \perp_\rho y.$ Thus the sufficient part is completed.\\
	
	 To show the necessary part, first suppose on the contrary that $x_j=0,$ for some $i \in \{1, 2, \ldots, n\}.$ Then we choose $y = (y_1, y_2, \ldots, y_n)$ such that $y_j=1$ and $y_k=\frac{1}{10^k},$ for all $k \in \{1, 2, \ldots, n\}\setminus \{i\}.$ One can clearly observe that $y \perp_\rho x,$ whereas $x \not\perp_\rho y.$ This contradicts that $x$ is $\rho$-right symmetric.  Now again we assume on a contrary that $0<|x_j| = |x_k| < 1,$ for some $ j, k \in \{1, 2, \ldots, n\}.$ Then we take $y \in S_{\ell_\infty^n}$ such that $y_j = sgn(x_j)$ and $y_k=-sgn(x_k)$ and $y_i=\frac{1}{10^i},$ for all $i \in \{1, 2, \ldots, n\}\setminus \{j, k\}.$ Then applying Proposition \ref{ortho; characterization}, we have $y \perp_\rho x$ but $x \not\perp_\rho y.$ This contradiction completes the proof of  the necessary part. 
\end{proof}
 
 Combining Theorem \ref{left; sup} and Theorem \ref{right; sup} we note that the extreme points are the only $\rho$-symmetric points on the unit sphere of $\ell_\infty^n.$
 \begin{theorem}
 	Let $x \in S_{\ell_\infty^n}.$ Then $x$ is $\rho$-symmetric point if and only if $x$ is an extreme point of $B_{\ell_\infty^n}.$
 \end{theorem}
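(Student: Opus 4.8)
The plan is to read off the result as the intersection of the two preceding characterizations, since $x$ is $\rho$-symmetric precisely when it is simultaneously $\rho$-left symmetric (Theorem \ref{left; sup}) and $\rho$-right symmetric (Theorem \ref{right; sup}). The key observation to keep in mind throughout is that $x \in Ext(B_{\ell_\infty^n})$ is equivalent to $\mathcal{I}_x = \{1, 2, \ldots, n\}$, that is, to $\{1, 2, \ldots, n\}\setminus \mathcal{I}_x = \emptyset$. Thus the entire argument reduces to comparing the conditions that the two theorems impose on the coordinates indexed by $\{1, 2, \ldots, n\}\setminus \mathcal{I}_x$.

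First I would dispose of the sufficiency. If $x \in Ext(B_{\ell_\infty^n})$, then $\{1, 2, \ldots, n\}\setminus \mathcal{I}_x = \emptyset$, so the requirement of Theorem \ref{left; sup}, namely that $x_j = 0$ for all $j \notin \mathcal{I}_x$, is satisfied vacuously, and $x$ is $\rho$-left symmetric; at the same time, alternative (i) of Theorem \ref{right; sup} holds outright, so $x$ is $\rho$-right symmetric. Being both, $x$ is $\rho$-symmetric.

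For the necessity, I would argue by contradiction: suppose $x$ is $\rho$-symmetric but not an extreme point of $B_{\ell_\infty^n}$. Then there exists an index $j \notin \mathcal{I}_x$. Since $x$ is $\rho$-left symmetric, Theorem \ref{left; sup} forces $x_j = 0$. On the other hand, $x$ is also $\rho$-right symmetric, and because it is not extreme, alternative (i) of Theorem \ref{right; sup} is ruled out, so alternative (ii) must hold; this requires in particular that $0 < |x_j| < 1$ for every $j \notin \mathcal{I}_x$, directly contradicting $x_j = 0$. Hence no such $j$ exists, $\mathcal{I}_x = \{1, 2, \ldots, n\}$, and $x$ is an extreme point.

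I do not anticipate any real obstacle here, as all the substance already resides in the two quoted theorems. The only point that deserves a moment's care is recognizing that the two demands on the off-$\mathcal{I}_x$ coordinates — vanishing (left symmetry) versus lying strictly between $0$ and $1$ with pairwise distinct moduli (right symmetry via alternative (ii)) — are mutually incompatible unless that index set is empty, which is exactly the extremality condition.
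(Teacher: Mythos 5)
Your proposal is correct and follows exactly the paper's route: the paper derives this theorem by combining Theorem \ref{left; sup} and Theorem \ref{right; sup}, which is precisely the intersection argument you carry out. Your write-up merely makes explicit the (easy) incompatibility between the condition $x_j=0$ for $j\notin\mathcal{I}_x$ and alternative (ii) of Theorem \ref{right; sup} when $x$ is not extreme, which the paper leaves to the reader.
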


We end this article with examples of  $\rho$-left and $\rho$-right symmetric points in $\ell_\infty^n,$ which are not extreme points.
 
 \begin{example}
 	Let us consider  $x_1=(1, 1, 0, 0, -1)$; $x_2 = (1, \frac{1}{2}, \frac{1}{5}, -1, \frac{2}{3})$ and $x_3 = (1, -\frac{1}{3}, 1, \frac{1}{3}, \frac{1}{7})$ are three points in $\ell_\infty^5.$ From Theorem \ref{left; sup} we observe that $x_1$ is a $\rho$-left symmetric point and from Theorem \ref{right; sup} we get that $x_2$ is $\rho$-right symmetric points. On the other hand, it is easy to see that $x_3$ is neither $\rho$-left symmetric nor $\rho$-right symmetric.  
 \end{example}
 
\subsection*{Declarations}
Data sharing is not applicable to this article as no datasets were generated or analysed during the current study.
Authors also declare that there is no financial or non-financial interests that are directly or indirectly related to the work submitted for publication.  On behalf of all authors, the corresponding author states that there is no conflict of interest.

\end{document}